\newtheorem{theorem}{Theorem}
\newtheorem{remark}[theorem]{Remark}
\newtheorem{lemma}[theorem]{Lemma}
\newtheorem{proposition}[theorem]{Proposition}
\newtheorem{definition}[theorem]{Definition}
\DeclareMathOperator*{\meas}{meas}          %
\newcommand{\DD} {\displaystyle}
\newcommand{\la} {\lambda}
\newcommand{\Om}{\Omega}
\newcommand{\ds} {\displaystyle}
\newcommand{\e}{\epsilon}
\newcommand{\al} {\alpha}
\newcommand{\ba} {\beta}
\newcommand{\ga} {\gamma}
\newcommand{\ra} {\rightarrow}
\newcommand{\De} {\Delta}
\newcommand{\noi} {\noindent}
\newcommand{\na} {\nabla}
\newcommand{\mb} {\mathbb}
\newcommand{\mc} {\mathcal}
\numberwithin{theorem}{section} \numberwithin{equation}{section}
\title[$p$-Kirchhoff problem with Stein-Weiss nonlinearity ]{ Multiplicity results for  $p$-Kirchhoff modified Schr\"odinger equations with Stein-Weiss type critical nonlinearity in $\mb R^N$ }
\author[Reshmi Biswas, Sarika Goyal and  K. Sreenadh]
{Reshmi Biswas, Sarika Goyal and K. Sreenadh}
\address{Reshmi Biswas \newline
	Department of Mathematics, IIT Delhi, Hauz Khas, New Delhi 110016, India}
\email{reshmi15.biswas@gmail.com}
\address{Sarika Goyal \newline
	Department of Mathematics, Bennett University, Greater Noida, Uttar Pradesh 201310, India}
\email{sarika.goyal@bennett.edu.in }
\address{K. Sreenadh \newline
	Department of Mathematics, IIT Delhi, Hauz Khas, New Delhi 110016, India}
\email{sreenadh@maths.iitd.ac.in}
\subjclass[2020]{35J20, 35J60}
 \keywords{ Stein-Weiss type convolution, Doubly weighted Hardy-Littlewood-Sobolev inequality, Critical nonlinearity, Kirchhoff- Schr\"odinger euation}
\begin{document}
	\begin{abstract}
		In this article, we consider the following modified quasilinear  critical Kirchhoff-Schr\"odinger problem involving Stein-Weiss type nonlinearity:
		$$  \quad \left\{
		\begin{array}{lr}
			\mc K(u)=  \la f(x) |u(x)|^{q-2} u(x)+\left(\ds\int_{\mathbb R^N}\frac{|u(y)|^{2p_{\beta,\mu}^{*}}}{|x-y|^{\mu}|y|^{\beta}}dy\right)\frac {|u(x)|^{2p_{\beta,\mu}^{*}-2} u(x)}{|x|^\beta}  \; \text{in}\; \mathbb R^N
		\end{array}
		\right.
		$$
		where \(\la>0\)  is a parameter, \(N\geq 3\), $\mc K(u) = \left(a+b\ds\int_{\mathbb R^N}|\nabla u|^{p}dx\right)\De_{p} u - a u \Delta_{p}(u^2)$ with \(a>0\), \(b\geq 0\),  $\beta\geq0,$ $0<\mu<N$, \(0<2\ba+ \mu< N\), \(2\leq q< 2 p^*\). Here $p^*=\frac{Np}{N-p}$ is the Sobolev critical exponent and \( p_{\ba,\mu}^{*}:= \frac p2\frac{(2N-2\beta-\mu)}{N-2} \) is the critical exponent with respect to the doubly weighted Hardy-Littlewood-Sobolev inequality (also called Stein-Weiss type inequality).  Then by establishing  a concentration-compactness argument for our problem, we show the existence of infinitely many nontrivial solutions to the equations with respect to the parameter \(\la\)   by using Krasnoselskii’s genus theory, symmetric mountain pass theorem and $\mb Z_2$- symmetric version of mountain pass theorem for different range of $q$. We further show  that these solutions belong to $L^\infty(\mb R^N)$.
	\end{abstract}
	
	\maketitle
	\section{\bf Introduction}
	Our aim in this article is to  study the following modified quasilinear critical Kirchhoff-Schr\"odinger problem involving Stein-Weiss type critical nonlinearity:
	\begin{equation}\label{eq1}  \quad \left\{
		\begin{array}{lr}
			\mc K(u)= \la f(x) |u(x)|^{q-2} u(x)+\left(\ds\int_{\mathbb R^N}\frac{|u(y)|^{2p_{\beta,\mu}^{*}}}{|x-y|^{\mu}|y|^{\beta}}dy\right)\frac {|u(x)|^{2p_{\beta,\mu}^{*}-2} u(x)}{|x|^\beta}  \; \text{in}\; \mathbb R^N
		\end{array}
		\right.
	\end{equation}
	where  
	$\mc K(u) = \left(a+b\ds\int_{\mathbb R^N}|\nabla u|^{p}dx\right)\De_{p} u - a u \Delta_{p}(u^2)$, 
	 $2\leq p<N$, \(a>0\), \(b\geq 0\), $\beta\geq 0,$ $\mu>0,$ \(0<2\beta+\mu<N \), {\(0< p_{\ba,\mu}^{*}:= \frac{p(2N-2\ba-\mu)}{2(N-p)} \), \(N\geq 3\)} and $\la>0$ is a parameter.   Here $2p<q<2p^*$, $p^*:=\frac {Np}{N-p}$ and $f(\geq 0)\in L^{\frac{2p^*}{2p^*-q}}(\mathbb R^N)$.\\

	\noi  The solutions of \eqref{eq1} involving the Schr\"odinger operator   $-\Delta_p u -u\Delta_p (u^{2}),$   are related with the solitary standing wave solutions to the  quasilinear Schrödinger equation 
of	the form
	\begin{align}\label{l}
		iu_t =-\Delta u+V(x)u-h_1(|u|^2)u-C\Delta h_2(|u|^2)h_2'(|u|^2)u,\;\; x\in\mathbb R^N,
	\end{align}
	where $V:\mb R^N\to \mb R$ is a continuous potential function, $C>0$ is  some positive real constant, $h_1$ and $h_2$ are some real valued functions with some appropriate assumptions. Based upon the different forms of the function   $h_2$, \eqref{l} explains  different phenomenon in the mathematical physics. For example, if $h_2(s)=s$ (see \cite{1}), then \eqref{l} is used in modelling the superfluid film equation in plasma physics and if $h_2=\sqrt{1+s^2}$ (see \cite{33}), \eqref{l} represents the self-channeling of a high-power ultra short laser in matter. Such kind of equations also have applications in the modeling of   dissipative quantum mechanics \cite{has}, plasma physics and fluid mechanics \cite{bass}, etc.\\\\
	The main feature of such operator is that the term $u\Delta_p(u^2)$, present in  \eqref{eq1},  does not let the natural energy functional corresponding to  \eqref{eq1} to be well defined for all $u\in D^{1,p}({\mathbb R^N})$ (defined in Section \ref{sec2}). Therefore, the  standard critical point theory in variational method is inconvenient to apply directly for such problems of type \eqref{eq1}. To overcome this 
	inconvenience, researchers have established several techniques and arguments, such as    constrained minimization
	technique (see \cite{29}), the perturbation method (see \cite{24}),
	change of variables (see \cite {CJ,do}). In this article, we use the change of variable method described in Section \ref{sec2}. In the recent advancement on such modified quasilinear equations,  we refer  the readers to  \cite{my6,8,12,bz3,lgg,my5} and the cited research works there in with no claim on completeness.\\\\
	%
	The nonlocal nonlinearity present in \eqref{eq1} is inspired  by the doubly weighted Hardy-Littlewood-Sobolev inequality (also called Stein-Weiss type inequality). Let us first recall the  well-known doubly weighted Hardy-Littlewood-Sobolev inequality (see \cite{SW}), which is stated as:
		\begin{proposition}\label{P1}
		 Let $t$, $s>1$, $\mu>0 $, $\vartheta+\beta\geq0$ and $0<\vartheta+\beta+\mu< N.$ Also, let $\frac 1t+\frac {\mu+\vartheta+\beta}{N}+\frac 1s=2$, $\vartheta<\frac{ N}{t'}, \beta<\frac {N}{s'}$, $g_1 \in L^t(\mathbb R^N)$ and $g_2 \in L^s(\mathbb R^N)$, where $t'$ and $s'$ denote the H\"older conjugate of $t$ and $s$, respectively. Then there exists a  constant $C(N,\mu,\vartheta,\beta,t,s)$, independent of $g_1,$ $g_2$ such that
		\begin{equation}\label{HLSineq}
			\int_{\mb R^N}\int_{\mb R^N} \frac{g_1(x)g_2(y)}{|x-y|^{\mu}|y|^\vartheta|x|^\beta}\mathrm{d}x\mathrm{d}y \leq C(N,\mu,\vartheta,\beta,t,r)\|g_1\|_{L^t(\mb R^N)}\|g_2\|_{L^s(\mb R^N)}.
		\end{equation}
	\end{proposition}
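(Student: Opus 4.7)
The plan is to establish the weighted inequality by reformulating it as the boundedness of a weighted Riesz-type operator and then combining the classical (unweighted) Hardy--Littlewood--Sobolev inequality with a careful treatment of the two power weights. First, by duality in $L^t$, the estimate \eqref{HLSineq} is equivalent to the boundedness of the linear operator
$$T g(x) := \frac{1}{|x|^{\beta}}\int_{\R^N}\frac{g(y)}{|x-y|^{\mu}\,|y|^{\vartheta}}\,dy$$
from $L^s(\R^N)$ into $L^{t'}(\R^N)$, where $t'$ is the H\"older conjugate of $t$. The scaling identity $\frac{1}{t}+\frac{\mu+\vartheta+\beta}{N}+\frac{1}{s}=2$ is precisely the homogeneity forced by dilation invariance: under $g_\la(x)=g(\la x)$ the norms $\|g_\la\|_{L^s}$ and $\|Tg_\la\|_{L^{t'}}$ scale with matching powers of $\la$, so homogeneity alone fixes the relation between the exponents.

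Next I would exploit the radial symmetry of the weights. By the Riesz rearrangement inequality applied to the symmetric decreasing kernel $|x-y|^{-\mu}$ together with the radial weights $|x|^{-\beta}$ and $|y|^{-\vartheta}$, one may assume $g_1,g_2\geq 0$ are radially symmetric and nonincreasing. A dyadic decomposition of $\R^N$ into annuli $A_k=\{2^k\le |x|<2^{k+1}\}$ then splits the double integral according to whether $|y|\le |x|/2$, $|y|\ge 2|x|$, or $|y|\sim |x|$. In the first two regions $|x-y|$ is comparable to $\max(|x|,|y|)$, so the Riesz kernel collapses into a pure power weight that can be summed against the dyadic norms of $g_1,g_2$. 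In the third region $|x|\sim |y|$ permits both weights to be frozen at a scale $2^k$, reducing matters to the unweighted HLS inequality applied to $g_1\chi_{A_k}$ and $g_2\chi_{A_k}$; the unweighted HLS is then the only nontrivial analytic input required.

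Assembling the pieces requires summing over the dyadic scales, and the sharp role of each hypothesis becomes visible here: the conditions $\vartheta<N/t'$ and $\beta<N/s'$ are exactly what make the weights locally integrable against $L^t_{\loc}$ and $L^s_{\loc}$ near the origin and ensure convergence of the dyadic sum in the positive direction, while $0<\vartheta+\beta+\mu<N$ controls the sum as $k\to-\infty$ together with $\mu>0$. The main obstacle is the simultaneous presence of \emph{two} singular weights on top of the Riesz kernel, which prevents a direct H\"older estimate from closing; the cleanest technical route is in fact the original Stein--Weiss strategy of proving restricted weak-type bounds at the endpoints where either $\vartheta$ or $\beta$ vanishes and then invoking Marcinkiewicz interpolation to move into the interior of the parameter region. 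Once this interpolation book-keeping is in hand, the estimate follows with a constant depending only on $N,\mu,\vartheta,\beta,t,s$ as claimed.
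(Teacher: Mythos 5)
A point of context first: the paper offers no proof of Proposition \ref{P1}; it is the classical doubly weighted Hardy--Littlewood--Sobolev inequality of Stein and Weiss, quoted with a citation to \cite{SW}, so there is no in-paper argument to compare yours against. Judged on its own, your text is a plausible roadmap rather than a proof, and several of its steps do not close as stated. First, the symmetrization: the Riesz rearrangement inequality handles a triple product $\iint F(x)G(x-y)H(y)\,dx\,dy$, whereas here there are five factors, so you need the Brascamp--Lieb--Luttinger inequality; even then the reduction to radial nonincreasing $g_1,g_2$ only works when \emph{both} $|x|^{-\beta}$ and $|y|^{-\vartheta}$ are symmetric decreasing, i.e.\ $\beta\ge 0$ and $\vartheta\ge 0$, while the hypothesis only demands $\vartheta+\beta\ge 0$ and thus allows one exponent to be negative. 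Second, on the diagonal $|x|\sim|y|$ you cannot apply the unweighted HLS inequality directly to $g_1\chi_{A_k}$, $g_2\chi_{A_k}$ with the exponents $t,s$: unweighted HLS needs $\frac{1}{t}+\frac{\mu}{N}+\frac{1}{s}=2$, but here $\frac{1}{t}+\frac{\mu}{N}+\frac{1}{s}=2-\frac{\vartheta+\beta}{N}$. You must first lower the exponents by H\"older on the finite-measure annulus so that the resulting power of $|A_k|$ cancels the frozen weight $2^{-k(\vartheta+\beta)}$, and then justify the summation over $k$ (it converges because $\frac{1}{t}+\frac{1}{s}\ge 1$, but this is exactly the bookkeeping that has to appear).

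The most serious issue is your final paragraph, which defers the entire analytic content to ``restricted weak-type bounds at the endpoints plus Marcinkiewicz interpolation.'' That step is not executed, and Marcinkiewicz interpolation is in any case not the right tool for moving in the \emph{weight} exponents $\vartheta,\beta$ (as opposed to the Lebesgue exponents); one needs Stein's interpolation with change of measure, or else the direct kernel-splitting argument of \cite{SW}. As an aside, your local-integrability heuristic for the hypotheses is mismatched: since $|x|^{-\beta}$ multiplies $g_1\in L^t(\mathbb R^N)$, the natural conditions are $\beta<N/t'$ and $\vartheta<N/s'$ (the statement as printed appears to have them crossed). In short, either carry out one of the two strategies you name in full, or do what the paper does and simply cite \cite{SW} (see also \cite{lieb}) for this classical inequality.
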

	
	
	\noi If $\vartheta=\beta=0$, this inequality \eqref{HLSineq} is the classical Hardy-Littlewood-Sobolev inequality (see \cite{lieb}). 
	S. Pekar first stated the study of such equations in  \cite{pekar}, where the author considered 
	the  nonlinear Schr\"{o}dinger-Newton equation of the form:
	\begin{align}\label{sn}
		-\Delta u + V(x)u = ({\mathcal{K}}_\mu * u^2)u +\la f_1(x, u)\;\;\; \text{in}\; \mathbb R^N,
	\end{align}
	where $\la>0$ is a parameter, $\mathcal{K}_\mu$ is  the  Riesz potential, $V:\mb R^N\to \mb R$ is continuous potential function, $f_1:\mb R^N\times\mb R\to \mb R$ is a Carath\'eodory function with some suitable assumptions and $*$ denotes the convolution.
	These types of equations are very much crucial in the application point of view  in Physics to describe the Bose-Einstein
	condensation (see \cite{bose}), the self gravitational
	collapse of a quantum mechanical wave function (see \cite{penrose}), etc.  With the help of  the elliptic equations of type, \eqref{sn} P. Choquard (see \cite{choq}) managed to explain the quantum theory of a polaron at rest and for modeling the phenomenon when an electron gets trapped in its own hole. 
	For deeper  study  of Choquard  equations, we refer to the readers the research works
	\cite{my3,my4,lieb,Lions,moroz-main} and the references therein. When $\beta>0$, then the elliptic problems involving Stein-Weiss type nonlinearities are studied in \cite{rs2,my1,my2}, very recently. \\\\
	On the other hand,  one of the  main features of \eqref{eq1} is  the presence of the both nonlocal Kirchhoff term  in the left-hand side and nonlocal Stein-Weiss type nonlinearity in the right-hand side of \eqref{eq1}. Hence our problem
	 is categorized as a doubly nonlocal problem. The   Kirchhoff-type models  arise  in  various   physical  and  biological  systems and hence, the study of the problems involving Kirchhoff operators has been quite popular in recent years. Precisely, 
	Kirchhoff established a model given by the following equation:
	$$\rho\frac{\partial^2 u}{\partial^2t}-\left(\frac{p_0}{h}+\frac{E}{2L}\int_0^L\left|\frac{\partial u}{\partial t}\right|^2dx\right)\frac{\partial^2 u}{\partial^2t}=0,$$ 
	which extends the classical D’Alembert wave equation by taking into account the effects of the changes in the length of
	the strings during the vibrations,
	where the constants $\rho, p_0, h, E, L$ 
	represent physical parameters of the string.
	Subsequently,  using the method of  Nehari manifold and the concentration
	compactness principle   L\"{u} \cite{lu}  studied the following Kirchhoff-Choquard problem
	\begin{align}\label{sn2}
			\Big(-a+b \int_{\mb R^3}
			|\nabla u|^2 dx\Big)\Delta u + V_\la(x)u = ({\mathcal{K}}_\mu * |u|^q)|u|^{q-2}u ~~\text{in}~ \mb R^3,\end{align} 
		where $a >0,  b\ge 0,$  $\mathcal{K}_\mu$ is  the  Riesz potential,
	$ V_\la(x) = 1 + \la g(x)$, $\la>0$ and $g$ is a continuous
	potential function, $q \in (2, 6 -\mu).$ 
Later Liang et. al \cite{lg1} studied \eqref{sn2} 	for $V_\la=0$, $q=2_\mu^*$ and by adding some perturbation  in the right-hand side which has sub-critical growth 
	in the sense of Sobolev inequality. Though the literature on  the Kirchhoff equation is really vast, without attempting to provide the complete list we refer to \cite{bz1,bz2,bz4,fig2} and references there in  to the readers.\\\\
	When it comes to the Kirchhoff problems involving the operator present in our problem \eqref{eq1}, without the convolution term in the nonlinearity in \eqref{eq1}, Liang et. al \cite{lg2} studied the multiplicity results for such modified quasilinear Kirchhoff equations. Then for $p=2$ and $\beta=0$ in \cite{na}, the authors studied such problem. 
	Also, for $\beta=0$, we refer to the work in \cite{bz3}, which deals with the  Choquard equations involving the modified quasilinear Schr\"odinger operator as in \eqref{eq1}, without the Kirchhoff term.\\\\
Motivated by all the aforementioned  works, in  this  article, we consider \eqref{eq1} for  $2\leq p<\infty$ and with critical Stein-Weiss type convolution term in combination with  sub-critical perturbation. The suitable Stein-Weiss type critical exponent is set here as $2.p_{\beta,\mu}^*$ due to the  Schr\"odinger term $u\Delta_{p}(u^2)$ present in the principal operator and for the same reason the  exponent $q$ also varies between $2$ to $2p^*$. We exhibit the existence of infinitely many solutions for \eqref{eq1}  by exploiting Krasnoselskii’s genus theory and by applying a variant  of Clark's theorem (also known as $\mb Z_2$- symmetric version of mountain pass theorem). 
   One of the main contributions to this article is that we  have proved a   concentration-compactness result (see Lemma \ref{c1}) related to our problem for general $2\leq p<\infty$, which is not yet studied  even for the equations of type \eqref{eq1} without the  Schr\"odinger term $u\Delta_{p}(u^2)$. In case of $p=2$, Du et al. \cite{my1} studied such result   for the equations involving Laplacian. This concentration-compactness result plays a very crucial in the context of our problem where we face lack of compactness due to the presence of the critical exponent as well as, the domain being whole of $\mb R^N$. This result will help us to analyze the behavior of the weakly convergent sequences in the solution space $D^{1,p}(\mb R^N)$ (see Section \ref{sec2}) so that we can prove the Palaise-Smale condition for the energy functional $\mc I_\la$(see \eqref{energy}) below some critical level. Furthermore, we prove global $L^\infty$ regularity estimate on the solutions to \eqref{eq1}, which is applicable even for the critical Choquard equation $(\beta=0)$ involving $p$-Laplacian in the whole of $\mb R^N$ for $p\geq2$, which is another important contribution to this article. We would like to mention that the detailed regularity results for the critical Choquard equations involving fractional and local $p$-Laplacian is first studied in \cite{rs1} for the bounded domain in $\mb R^N$.  The main difficulty we face here is due to involvement of the Stein-Weiss type critical nonlinearity and Kirchhoff term together. This  gives rise to several interactions with the exponent $q,$ $p^*$, $p_{\beta,\mu}^*$ and the parameters $\la,a,b,\mu,\beta$ which have effects in the multiplicity results. Depending on this, we need to  carry out delicate analysis. To the best of our knowledge the results studied in this present paper are not available  in the literature for the equation of type \eqref{eq1}.\\\\
	Now we state the main results in this article. Throughout this article we take the following assumptions on the parameters:
	\begin{equation} \label{assumption1}  
		 a>0,\; b\geq 0,\; \beta\geq 0,\;\mu>0,\; 0<2\beta+\mu<\min \{2p,N\},\; 2\leq p<N. 
	 \end{equation}
	\begin{theorem}\label{main.result.1}
		Let  $2<q<2p$  and let $\Om:=\{x\in\mb R^N\; :\; f(x)>0\}$ is an open subset of $\mb R^N$ such that $0<\meas(\Om)<\infty$. Then there exists $\la^*>0$ such that for all $\la\in(0,\la^*)$, \eqref{eq1} admits a sequence of nontrivial weak solutions $\{u_k\}$ in $D^{1,p}(\mb R^N)\cap L^\infty(\mb R^N)$ with negative energy and $u_k\to 0$ strongly in $D^{1,p}(\mb R^N)$ as $k\to\infty$.
	\end{theorem}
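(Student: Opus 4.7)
My plan is to apply the $\mb Z_2$-symmetric mountain pass (Clark/Kajikiya) theorem to the $C^1$ functional $\mc I_\la$ obtained on $D^{1,p}(\mb R^N)$ via the change of variable $u=g(v)$ from Section \ref{sec2}, so that critical points $v$ produce weak solutions $u=g(v)$ of \eqref{eq1}. Because the Stein-Weiss critical scale $2p_{\ba,\mu}^{*}$ sits strictly above the Kirchhoff scale $2p$, $\mc I_\la$ is not globally bounded below; I would therefore truncate the critical convolution term to produce a modified functional $\tilde{\mc I}_\la$ that coincides with $\mc I_\la$ on a small ball $\{\|\nabla v\|_{L^p}\le\rho_0\}$ and is coercive and bounded below. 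Critical points of $\tilde{\mc I}_\la$ of sufficiently small norm are automatically critical points of $\mc I_\la$. The three ingredients for Kajikiya's theorem are then (a) $\tilde{\mc I}_\la\in C^1$ is even, $\tilde{\mc I}_\la(0)=0$, bounded below; (b) the $(PS)_c$ condition holds at negative $c$; (c) for each $k\in\mb N$ there is a symmetric compact $A_k\subset D^{1,p}(\mb R^N)$ with Krasnoselskii genus $\gamma(A_k)\ge k$ and $\sup_{A_k}\tilde{\mc I}_\la<0$.

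\textbf{Bounded below and genus sets.} Using the pointwise bounds $|g(s)|\le|s|$ and $|g(s)|\le 2^{1/4}|s|^{1/2}$ (Section \ref{sec2}) combined with Proposition \ref{P1}, the critical convolution term is controlled by a constant times $\|\nabla v\|_{L^p}^{2p_{\ba,\mu}^{*}}$, and H\"older with $f\in L^{2p^*/(2p^*-q)}(\mb R^N)$ controls the sub-critical term by a quantity of the form $C_f\la\|\nabla v\|_{L^p}^{\tau}$ with $\tau<p$ (reflecting the sub-linear growth of $g(v)^q$ at infinity). After truncation these estimates yield $\tilde{\mc I}_\la(v)\ge\frac{a}{p}\|\nabla v\|_{L^p}^p-C_f\la\|\nabla v\|_{L^p}^{\tau}-C(\rho_0)$, hence boundedness below and coercivity. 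To build the genus-$k$ sets, take $k$ pairwise disjoint balls $B_1,\dots,B_k\subset\Om$ and nonzero $\varphi_i\in C_c^\infty(B_i)$. On the finite-dimensional space $E_k=\mathrm{span}\{\varphi_1,\dots,\varphi_k\}$ all norms are equivalent, and because $f>0$ on $\Om$ the sub-critical term on $E_k\cap S_\rho$ is strictly negative of order $\la\rho^q$. A careful comparison against the positive $O(\rho^p)$ kinetic, $O(\rho^{2p})$ Kirchhoff and (after truncation) bounded critical contributions then yields scales $\rho_k>0$ with $\sup_{E_k\cap S_{\rho_k}}\tilde{\mc I}_\la<0$. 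The bound $\la<\la^*$ is calibrated so that a single $\la$ permits this construction for every $k$ and keeps all resulting critical levels inside the range where $(PS)$ below holds.

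\textbf{Palais--Smale (main obstacle).} The core technical point is $(PS)_c$ for $\tilde{\mc I}_\la$ at negative $c$ close to zero. Given a PS-sequence $\{v_n\}$ at such level, coercivity yields boundedness in $D^{1,p}(\mb R^N)$ and, up to a subsequence, $v_n\weak v$. The critical Stein-Weiss nonlinearity and the unboundedness of $\mb R^N$ may both cause loss of compactness, controlled by Lemma \ref{c1}, which decomposes the limit measures of $|\nabla v_n|^p$, $|g(v_n)|^{2p_{\ba,\mu}^{*}}|x|^{-2\ba}$, and the associated doubly-weighted convolution into an absolutely continuous part, countable atoms $\{x_j\}$ (possibly including the origin), and a mass defect at infinity. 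Testing $\tilde{\mc I}_\la'(v_n)\to 0$ against cut-offs around each atom and at infinity, together with Proposition \ref{P1}, produces a reverse Stein-Weiss inequality at each atom; an energy count then shows that any nonzero atom contributes at least a positive threshold $c^{*}=c^{*}(a,p,\mu,\ba,S)>0$ to $c$, which contradicts $c<0$ provided $\la<\la^{*}$. Hence no concentration occurs, $g(v_n)\to g(v)$ strongly in the weighted Lebesgue norms appearing in $\mc I_\la'$, and the $(S^+)$-property of $-\De_p$ upgrades $v_n\weak v$ to strong convergence.

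\textbf{Conclusion and regularity.} With (a)--(c) in place for $\la\in(0,\la^*)$, Kajikiya's theorem produces a sequence $\{v_k\}$ of critical points of $\tilde{\mc I}_\la$ with $\tilde{\mc I}_\la(v_k)<0$, $\tilde{\mc I}_\la(v_k)\to 0^{-}$, and (by coercivity) $\|\nabla v_k\|_{L^p}\to 0$. For $k$ large, $\|\nabla v_k\|_{L^p}<\rho_0$, so the truncation is inactive and each $v_k$ is a critical point of the original $\mc I_\la$; hence $u_k=g(v_k)$ is a weak solution of \eqref{eq1} with negative energy, and the Lipschitz bound $|g(v_k)|\le|v_k|$ gives $u_k\to 0$ in $D^{1,p}(\mb R^N)$. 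Finally, each $u_k\in L^\infty(\mb R^N)$ follows by the Moser iteration announced in the introduction, applied to the Euler--Lagrange equation of $\mc I_\la$ and using Proposition \ref{P1} at each step to reinvest the convolution term into successively higher weighted Lebesgue norms.
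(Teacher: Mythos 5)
Your overall architecture --- the dual change of variable $u=g(w)$, a truncation of $\mc I_\la$ so that Kajikiya's symmetric mountain pass theorem applies, the concentration--compactness analysis of Lemma \ref{c1} to rule out atoms and mass at infinity for $(PS)_c$ sequences at negative levels when $\la$ is small, and Moser iteration for the $L^\infty$ bound --- coincides with the paper's, and those parts of your outline are sound.

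The genuine gap is in your construction of the genus-$k$ sets. You propose to make $\tilde{\mc I}_\la<0$ on small spheres $E_k\cap S_\rho$ by letting the term $-\frac{\la}{q}\int_{\mb R^N} f|g(w)|^q\,dx$, which on a finite-dimensional subspace is of order $-c_k\la\rho^{q}$, beat the kinetic term $\frac{a}{p}\rho^{p}$. But near the origin $g(s)\sim s$ (property $(g_{10})$), so the subcritical term really does scale like $\rho^{q}$ there; the asymptotics $|g(s)|\sim|s|^{1/2}$, which produce the favourable exponent $q/2<p$, hold only for $|s|\ge 1$, i.e.\ at \emph{large} scales, where your truncation is active and where in any case $\rho^{q/2}=o(\rho^{p})$. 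Since the theorem allows the full range $2<q<2p$ with $p\ge 2$, one can have $q\ge p$, and then $\la\rho^{q}=o(\rho^{p})$ as $\rho\to 0$: together with the fact that the critical convolution term is of the still higher order $\rho^{4p^{*}_{\ba,\mu}}$ on $E_k$, the functional is \emph{positive} on every sufficiently small sphere of $E_k$, and no choice of $\rho_k$ obtained from your comparison gives $\sup_{E_k\cap S_{\rho_k}}\tilde{\mc I}_\la<0$. Your argument is therefore complete only in the genuinely concave regime $2<q<p$. The paper does not rely on the $f$-term at this step: in the proof of \eqref{claim1} it drives $\hat I_\la$ negative on $X_k$ using the critical convolution term, which by equivalence of norms on $X_k$ dominates $\|w\|^{2p}$ along rays, and then argues by contradiction; to cover $p\le q<2p$ you would need an argument of that type (and, as the radii must also satisfy $\varrho_k<t_1$ so that the truncation is inactive, this step requires more care than either your sketch or the paper's one-line contradiction suggests).
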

	\begin{theorem}\label{sym-infinite-sol}
				Let 
				 $q=2p$. Then there exists  positive constants $\hat a$  such that for all $a>\hat a$ and for all $\la\in(0,\,aS\|f\|^{-1}_{\frac{p^*}{p^*-p}})$,  \eqref{eq1} has infinitely many nontrivial weak solutions in $D^{1,p}(\mb R^N)\cap L^\infty(\mb R^N).$
	\end{theorem}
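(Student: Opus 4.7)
\emph{Setup.} Following Section \ref{sec2}, let $g$ be the odd smooth change-of-variable function associated with the $u\De_p(u^2)$ term and set $u=g(v)$, converting \eqref{eq1} into an equivalent equation for $v$ whose natural energy functional $\mathcal I_\la$ is $C^1$ and $\mathbb Z_2$-symmetric on $D^{1,p}(\mathbb R^N)$. The plan is to apply a symmetric (fountain-type) mountain pass theorem to $\mathcal I_\la$, then bootstrap $L^\infty$ regularity on the resulting solutions.

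\emph{Symmetric mountain pass geometry.} For the estimate at the origin, combine the elementary bound $|g(v)|\leq|v|$, H\"older's inequality in the conjugate pair $(p^*/(p^*-p),\,p^*/p)$, and the Sobolev embedding $D^{1,p}(\mathbb R^N)\hookrightarrow L^{p^*}(\mathbb R^N)$ with optimal constant $S$, to obtain
\begin{equation*}
\frac{\la}{2p}\int_{\mathbb R^N}f(x)|g(v)|^{2p}\,dx\;\leq\;\frac{\la}{2pS}\,\|f\|_{p^*/(p^*-p)}\|v\|_{D^{1,p}}^{2p}.
\end{equation*}
Together with the critical Stein-Weiss control of order $\|v\|^{4p_{\ba,\mu}^*}$ coming from Proposition \ref{P1} (note $4p_{\ba,\mu}^*>2p$), the hypothesis $\la<aS\|f\|^{-1}_{p^*/(p^*-p)}$ yields $\mathcal I_\la(v)\geq\alpha>0$ on a sphere $\|v\|=\rho$ for some $\rho,\alpha>0$. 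On each finite-dimensional subspace $E_k\subset D^{1,p}(\mathbb R^N)$, all norms are equivalent and the dominant $\|v\|^{4p_{\ba,\mu}^*}$ contribution drives $\mathcal I_\la\to-\infty$, giving the required $-\infty$ direction for each $k$.

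\emph{Compactness and conclusion.} The central step is verifying the Palais-Smale (Cerami) condition below an explicit level. A $(PS)_c$ sequence is first shown to be bounded in $D^{1,p}(\mathbb R^N)$ using the Kirchhoff term and the structure $q=2p$; then the concentration-compactness result Lemma \ref{c1}, applied to the weak-star limits of $|\nabla v_n|^p\,dx$ and of the weighted convolution measure, rules out atom formation at finite points and at infinity provided $c$ lies below an explicit threshold $c^*=c^*(a,b)$ controlled by the sharp Stein-Weiss constant, yielding strong convergence $v_n\to v$ in $D^{1,p}(\mathbb R^N)$. The hypothesis $a>\hat a$ is used precisely to make $c^*$ large enough to dominate the symmetric minimax levels built in the previous step. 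The symmetric mountain pass theorem then produces infinitely many critical points $\{v_k\}$ of $\mathcal I_\la$; setting $u_k=g(v_k)$ yields infinitely many weak solutions of \eqref{eq1}. Global $L^\infty$ regularity is obtained by a Moser iteration adapted to the nonlocal nonlinearity: test with $|u_k|^{\gamma p}u_k$ (suitably truncated), apply the doubly weighted Hardy-Littlewood-Sobolev inequality at each step to control the convolution, and iterate $\gamma\to\infty$ using the Sobolev embedding.

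The principal obstacle is the balancing act in the compactness step: one must simultaneously establish $(PS)_c$ below $c^*$ and keep all symmetric minimax values under $c^*$. This is exactly what the hypothesis $a>\hat a$ secures, because the sharp Stein-Weiss threshold $c^*$ grows with $a$, whereas the minimax levels, constructed from fixed compactly supported test functions on the $E_k$, remain controlled.
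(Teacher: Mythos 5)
Your overall architecture coincides with the paper's: change of variables $u=g(v)$, the $\mathbb Z_2$-symmetric mountain pass theorem (Theorem \ref{sm}), boundedness of $(PS)_c$ sequences plus the concentration-compactness Lemma \ref{c1} to get $(PS)_c$ below a threshold $c^*$ depending on $a$ and $S_{\ba,\mu}$, the choice of $\hat a$ to push the minimax levels under $c^*$, and a Moser-type iteration for $L^\infty$. So the route is the same; the problem is in the key estimate you display.

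The inequality
\begin{equation*}
\frac{\la}{2p}\int_{\mathbb R^N}f(x)|g(v)|^{2p}\,dx\;\leq\;\frac{\la}{2pS}\,\|f\|_{p^*/(p^*-p)}\|v\|_{D^{1,p}}^{2p}
\end{equation*}
is not derivable from $|g(v)|\le|v|$, H\"older with the pair $\l(\frac{p^*}{p^*-p},\frac{p^*}{p}\r)$, and the Sobolev embedding: that combination produces $\|f\|_{p^*/(p^*-p)}\,\|\,|v|^{2p}\|_{p^*/p}=\|f\|_{p^*/(p^*-p)}\|v\|_{2p^*}^{2p}$, and $D^{1,p}(\mathbb R^N)$ does not embed into $L^{2p^*}(\mathbb R^N)$. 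The correct estimate uses Lemma \ref{L1}-$(g_6)$, $|g(v)|^{2p}\le 2|v|^{p}$, which gives $\int f|g(v)|^{2p}\,dx\le 2S^{-1}\|f\|_{p^*/(p^*-p)}\|v\|^{p}$ --- homogeneity $\|v\|^{p}$, not $\|v\|^{2p}$. This is not a cosmetic point: it is precisely because the perturbation competes at the \emph{same} order $\|v\|^p$ as the leading term $\frac ap\|v\|^p$ that the restriction $\la<aS\|f\|^{-1}_{p^*/(p^*-p)}$ is needed, both for the geometry $(\mc B_1)$ (Lemma \ref{smg}) and, more importantly, for the boundedness of $(PS)_c$ sequences (Lemma \ref{lbb}), which your proposal asserts without noticing that it fails for large $\la$. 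With your (erroneous) $\|v\|^{2p}$ bound the hypothesis on $\la$ would look superfluous, which should have been a warning sign. The same slip appears in the Stein--Weiss term: via $(g_6)$ its contribution is of order $\|v\|^{2p^*_{\ba,\mu}}$ (as in \eqref{mpp}), not $\|v\|^{4p^*_{\ba,\mu}}$; using $(g_5)$ there would again require an $L^{2p^*}$ bound that is unavailable. Once you replace $(g_5)$ by $(g_6)$ throughout, your argument aligns with Lemmas \ref{lbb}, \ref{ce}, \ref{smg} and \ref{ss} and the proof closes as in the paper.
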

%
%
	\begin{theorem}\label{dual-fount-sol}
	Let 
	$2p<q<2p^*$. Then for all $\la>0$, \eqref{eq1} has infinitely many solutions  in $D^{1,p}(\mb R^N)\cap L^\infty(\mb R^N)$.
	\end{theorem}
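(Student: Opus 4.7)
The plan is to deploy a $\mathbb{Z}_2$-symmetric Fountain-type theorem on the transformed problem. After the change of variables $u=g(v)$ introduced in Section~\ref{sec2}, weak solutions of \eqref{eq1} correspond to critical points in $D^{1,p}(\mathbb{R}^N)$ of an even $C^1$ energy functional $\mathcal{I}_\lambda$ whose positive part is a $p$-Dirichlet plus Kirchhoff contribution and whose negative part is
\[
\frac{\lambda}{q}\int_{\mathbb{R}^N} f(x)|g(v)|^{q}\,dx + \frac{1}{2\cdot 2p^*_{\beta,\mu}}\int_{\mathbb{R}^N}\!\!\int_{\mathbb{R}^N}\frac{|g(v(x))|^{2p^*_{\beta,\mu}}|g(v(y))|^{2p^*_{\beta,\mu}}}{|x|^\beta|x-y|^\mu|y|^\beta}\,dx\,dy;
\]
evenness comes from the oddness of $g$. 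Fix a Schauder basis $\{e_j\}_{j\ge 1}$ of $D^{1,p}(\mathbb{R}^N)$, put $Y_k=\bigoplus_{j\leq k}\mathbb{R}e_j$ and $Z_k=\overline{\bigoplus_{j\geq k}\mathbb{R}e_j}$, and aim to verify the three Fountain conditions: \textbf{(F1)} $\max\{\mathcal{I}_\lambda(v):v\in Y_k,\|v\|=\rho_k\}\leq 0$ for some $\rho_k\to\infty$; \textbf{(F2)} $\inf\{\mathcal{I}_\lambda(v):v\in Z_k,\|v\|=r_k\}\to+\infty$ as $k\to\infty$; and \textbf{(F3)} a suitable Palais--Smale condition at the resulting min-max levels $c_k$.

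For \textbf{(F1)}, the asymptotics $|g(t)|\sim|t|^{1/2}$ for $|t|$ large yield $|g(v)|^q\sim|v|^{q/2}$; since $q/2>p$, this subcritical term dominates the positive $p$-Dirichlet and Kirchhoff contributions on the finite-dimensional space $Y_k$, giving $\mathcal{I}_\lambda(v)\to-\infty$ as $\|v\|\to\infty$ uniformly on $Y_k\cap S_{\rho_k}$. For \textbf{(F2)}, combine $|g(t)|\leq|t|$ with the standard decay $\beta_k:=\sup\{\|v\|_{L^s(f\,dx)}:v\in Z_k,\|v\|=1\}\to 0$ for $s\in(p,p^*)$, together with an analogous decay of the Stein--Weiss trace seminorm on $Z_k$ obtained from Proposition~\ref{P1} combined with Sobolev embedding; choosing $r_k\to\infty$ to balance these contributions against the leading $p$-energy delivers $\inf_{Z_k\cap S_{r_k}}\mathcal{I}_\lambda\to+\infty$. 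Note that, in contrast to Theorems~\ref{main.result.1} and~\ref{sym-infinite-sol}, no smallness on $\lambda$ is needed here because the superlinearity $q>2p$ makes the subcritical term harmless on $Z_k$ and decisive on $Y_k$.

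The main obstacle is \textbf{(F3)} in the presence of the critical Stein--Weiss nonlinearity. Given a (PS)$_{c_k}$ sequence $(v_n)$, boundedness in $D^{1,p}(\mathbb{R}^N)$ follows from $q>2p$ by an Ambrosetti--Rabinowitz argument adapted to the change-of-variables setting, testing with a multiple of $g(v_n)/g'(v_n)$ so that the ratio $q/2>p$ controls the gradient norm against the two nonlinearities simultaneously. Passing to a weak limit $v$ and applying the concentration--compactness Lemma~\ref{c1} shows that the Stein--Weiss defect splits into at most countably many atoms at points $x_i\in\mathbb{R}^N\cup\{\infty\}$, each carrying a quantized energy contribution $\geq\Theta(a,p,\beta,\mu)>0$; selecting the Schauder basis so that the min-max values $c_k\to+\infty$ lie in windows disjoint from the discrete multiples of $\Theta$ rules out the appearance of any atom and forces $v_n\to v$ strongly. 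This produces a sequence of critical points $v_k$ with $\mathcal{I}_\lambda(v_k)\to+\infty$; the preimages $u_k=g(v_k)\in D^{1,p}(\mathbb{R}^N)$ are the desired weak solutions of \eqref{eq1}, and their $L^\infty$ regularity is inherited from the Moser-type iteration scheme already established for Theorems~\ref{main.result.1} and~\ref{sym-infinite-sol}.
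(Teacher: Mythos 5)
Your overall plan (symmetrize, change variables, apply a $\mathbb{Z}_2$-minimax theorem, recover compactness from Lemma \ref{c1}, finish with Moser iteration) is in the right family, but two of your three Fountain conditions fail as stated, and the paper sidesteps both failures by using the $\mathbb{Z}_2$-symmetric mountain pass theorem (Theorem \ref{sm}) with a \emph{fixed small} sphere rather than the Fountain theorem with radii $r_k\to\infty$. The decisive problem is (F2): the decay $\beta_k\to0$ of restricted norms on the tails $Z_k$ is equivalent to compactness of the corresponding embedding. It holds for the weighted subcritical term because $f\in L^{\frac{2p^*}{2p^*-q}}(\mathbb{R}^N)$ compensates the non-compactness, but it is false for the critical Stein--Weiss term: if $\sup\{\|v\|_{\beta,\mu}:v\in Z_k,\ \|v\|=1\}\to0$, then splitting any bounded weakly null sequence as $P_kv_n+(I-P_k)v_n$ would force $\|v_n\|_{\beta,\mu}\to0$, i.e.\ the critical functional would be weakly sequentially continuous on bounded sets, contradicting concentration (the embedding $D^{1,p}(\mathbb{R}^N)\hookrightarrow L^{p^*}(\mathbb{R}^N)$ and the Stein--Weiss seminorm are scale invariant and non-compact). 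Consequently the negative term of order $r_k^{2p^*_{\beta,\mu}}$ eventually overwhelms $r_k^{p}$ on $Z_k\cap S_{r_k}$ and $\inf_{Z_k\cap S_{r_k}}\mathcal{I}_\lambda\to-\infty$ rather than $+\infty$. In the paper's $(\mathcal{B}_1)$ (Lemma \ref{ft}) one instead takes a single small radius $\rho$, where $\frac{a}{p}\|w\|^{p}$ dominates both $\|w\|^{q/2}$ (since $q/2>p$) and $\|w\|^{2p^*_{\beta,\mu}}$ (since $2p^*_{\beta,\mu}>p$), with no largeness of the radius required.

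Your (F3) mechanism is also not valid: Lemma \ref{c1} gives only a \emph{lower} bound for the energy carried by each atom, and the level of a non-compact $(PS)$ sequence is $\mathcal{I}_\lambda(v)$ plus the atomic contributions, with $\mathcal{I}_\lambda(v)$ ranging over a continuum; the set of levels at which compactness can fail is therefore a half-line $[c^{**},\infty)$, not a discrete set of quantized values that a clever choice of basis could dodge. Since your $c_k\to+\infty$, all but finitely many of them would lie in the non-compact range. The paper instead establishes the $(PS)_c$ condition only for $c<c^{**}=\left(\frac{1}{2p}-\frac{1}{q}\right)\left(aS_{\beta,\mu}\right)^{\frac{p^*_{\beta,\mu}}{p^*_{\beta,\mu}-1}}$ (Lemma \ref{cf}) and keeps the min-max values of Theorem \ref{sm} below that threshold. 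A smaller issue in (F1): you let the subcritical term, of order $\|v\|^{q/2}$, drive $\mathcal{I}_\lambda$ to $-\infty$ on $Y_k$, but $q/2<p^*$ and $p^*\le 2p$ is possible (e.g.\ $N\ge 2p$), so this term need not beat the Kirchhoff contribution $\frac{b}{2p}\bigl(\int|g'(v)|^{p}|\nabla v|^{p}\bigr)^{2}\sim\|v\|^{2p}$ when $b>0$; the paper's $(\mathcal{B}_3)$ uses the \emph{critical} term, whose growth $\|v\|^{2p^*_{\beta,\mu}}$ exceeds $\|v\|^{2p}$ precisely because $2\beta+\mu<2p$ forces $p^*_{\beta,\mu}>p$. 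Your boundedness argument for $(PS)$ sequences and the final $L^\infty$ step are sound and correspond to Lemma \ref{lbbb} and Proposition \ref{infty}.
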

	\begin{remark}
	  We would also like to highlight that our  results are also new  for the classical $p$-Kirchhoff equation:
	\begin{equation}\label{eq2}  \quad \left\{
		\begin{array}{lr}
			\left(a+b\ds\int_{\mathbb R^N}|\nabla u|^{p}dx\right)\De_{p} u= \la f(x) |u(x)|^{q-2} u(x)+\left(\ds\int_{\mathbb R^N}\frac{|u(y)|^{p_{\beta,\mu}^{*}}}{|x-y|^{\mu}|y|^{\beta}}dy\right)\frac {|u(x)|^{p_{\beta,\mu}^{*}-2} u(x)}{|x|^\beta}  \; \text{in}\; \mathbb R^N
		\end{array}
		\right.
	\end{equation}  
	where all the parameters satisfy \eqref{assumption1}, $p<q<p^*$ and $f(\geq 0)\in L^{\frac{p^*}{p^*-q}}(\mathbb R^N)$. 
	\end{remark}
{\bf Notations}
\begin{itemize}
\item The constants $K,\;C$ and $ C_i$, $1=1,2,3,\cdots$ are positive real numbers (only depend on $N,p,\beta,\mu,q,a,b$, if nothing is mentioned) which may vary from line to line.
\item For any real number $r>0$, $B_r(0)$ denoted the ball of radius $r$ centered at $0$ with respect to the norm topology in $D^{1,p}(\mb R^N)$.
\item For any set $S$, the closer of the set is denoted by $\overline S$.
\item If $A$ is a measurable set in $\mathbb{R}^{N}$, we the Lebesgue measure of $A$  by $\meas( A )$. 
\item The arrows $\rightharpoonup $, $\to $ denote weak convergence,  strong convergence, respectively.
\item The notation $o_n(1)$ means as $n\to\infty$, $o_n(1)\to0.$
\end{itemize}

%
%

	\section{\bf Preliminaries and variational structure}\label{sec2} 
First, for any $1<p<\infty$, we recall the definition of the Sobolev space  \[D^{1,p}(\mathbb R^N)=\left\{u\in L^{p^*}(\mb R^N): \int_{\mb R^N}|\nabla u|^p dx <\infty\right\},\] which is equipped with the norm \[\|u\| :=\|u\|_{D^{1,p}(\mathbb R^N)}=\left(\int_{\mb R^N}|\nabla u|^p dx\right)^{\frac 1p}. \] 
Here $p^*$ denotes the Sobolev critical exponent and $p^*=\frac{N}{N-p}$, if $p<N$ and $p^*=\infty$ if $N\geq p$. By $(D^{1,p}(\mb R^N))^*$ we denote the dual of $D^{1,p}(\mb R^N)$ and by $\langle\cdot,\cdot\rangle$ we denote the dual paring between   $D^{1,p}(\mb R^N)$ and its dual $(D^{1,p}(\mb R^N))^*.$ Concerning the doubly weighted Hardy-Littlewood-Sobolev inequality \eqref{HLSineq}, if \(\alpha=\ga=\ba\) and \(s=r\), then the integral is
\[\int_{\mathbb R^N}\int_{\mathbb R^N} \frac{|u(x)|^{t}|u(y)|^{t}}{|x|^{\beta}|x-y|^{\mu}|y|^{\beta}} dx dy\]
is well-defined if \(|u|^t  \in L^{q}(\mathbb R^N)\) for some \(q >1\) satisfying \(\frac{2}{q} + \frac{2\beta +\mu}{N}= 2\).

For \(u \in D^{1,p}(\mathbb R^N)\), by the Sobolev embedding theorem, we have \(p\leq tq \leq \frac{Np}{N-p}\). Thus
\[\frac{p(2N- 2\ba-\mu)}{2N} \leq t \leq \frac{p(2N -2\ba -\mu)}{2(N-p)}\]
In this sense, we call \({p_{*}}_{\ba,\mu} = \frac{p(2N- 2\beta-\mu)}{2N}\) the lower critical exponent and \(p^{*}_{\ba, \mu} = \frac{p(2N -2\ba -\mu)}{2(N-p)}\) the upper critical exponent in the sense of the weighted-Littlewood-Sobolev inequality. Also, we have $0<p_{\beta,\mu}^*<p^*<2p_{\beta,\mu}^*.$ Generally, for \(\ga=\ba \geq 0\) and \(2\ba+ \mu \leq N\), the limit embedding for the upper critical exponent leads to the inequality
\[\left(\int_{\mathbb R^N}\int_{\mathbb R^N} \frac{|u(x)|^{p_{\ba,\mu}^{*}}|u(y)|^{p_{\ba,\mu}^{*}}}{|x|^{\beta}|x-y|^{\mu}|y|^{\beta}} dx dy \right)^{\frac{1}{p^{*}_{\ba,\mu}}} \leq C\int_{\mb R^N} |\nabla u|^{p} dx.\]

We define  the following norm
\[\|u\|_{\ba,\mu} = \left( \int_{\mathbb R^N}\int_{\mathbb R^N} \frac{|u(x)|^{p_{\ba, \mu}^{*}}|u(y)|^{p_{\ba,\mu}^{*}}}{|x|^{\ba}|x-y|^{\mu}|y|^{\ba}} dx dy\right) ^{\frac {1}{2.p^{*}_{\ba,\mu}}} .\]
Let us set
\begin{align}\label{bc1}S_{\ba,\mu}:= \inf_{u\in D^{1,p}(\mb R^N)\setminus \{0\}} \frac{\displaystyle\int_{\mathbb R^N} |\nabla u|^p dx}{ \left( \displaystyle\int_{\mathbb R^N}\int_{\mathbb R^N} \frac{|u(x)|^{p_{\ba, \mu}^{*}}|u(y)|^{p_{\ba,\mu}^{*}}}{|x|^{\ba}|x-y|^{\mu}|y|^{\ba}} dx dy\right) ^{\frac {p}{2.p^{*}_{\ba,\mu}}}}.
\end{align}

From the weighted Hardy-Littlewood-Sobolev inequality \eqref{HLSineq}, for all \(u \in D^{1,p}(\mathbb R^N),\) we know
\[\|u\|_{\ba,\mu}^2 \leq C(N,\ba, \mu)^{\frac{1}{p_{\ba, \mu}^*}} \|u\|_{p^*}^{p}. \]
Then
\[S_{\ba,\mu}\geq \frac{S}{C(N,\ba, \alpha)^{\frac{1}{p_{\ba, \mu}^*}}} >0,\]
where \(S\) is the best Sobolev constant for the embedding \(D^{1,p}(\mathbb R^N)\) into \(L^{p^*}(\mathbb R^N)\) is defined as
\begin{align}\label{se}S= \inf_{u\in D^{1,p}(\mb R^N)\setminus \{0\},\|u\|_{p^*}=1}\left\{\int_{\mathbb R^N} |\nabla u|^p dx\right\}.
	\end{align}
Observe that the natural energy functional related to \eqref{eq1} is not well defined for $u\in D^{1,p}(\mb R^N)$. To overcome this difficulty, we employ the following change of variables which was introduced in \cite{CJ}, namely, $w:=g^{-1}(u),$ where $g$ is defined by
\begin{equation}\label{g}
	\left\{
	\begin{array}{l}
		g^{\prime}(s)=\displaystyle\frac{1}{\left(1+2^{p-1}|g(s)|^{p}\right)^{\frac{1}{p}}}~~\mbox{in}~~ [0,\infty),\\
		g(s)=-g(-s)~~\mbox{in}~~ (-\infty,0].
	\end{array}
	\right.
\end{equation}

\noi	Now we state  some important and useful properties of $g$. For the detailed proofs of such results, one can see \cite{CJ,do} and references there in.

\begin{lemma}\label{L1}
	The function $g$ satisfies the following properties:
	\begin{itemize}
		\item[$(g_1)$] $g$ is uniquely defined, $C^{\infty}$ and invertible;
		\item[$(g_2)$] $g(0)=0$;
		\item[$(g_3)$] $0<g^{\prime}(s)\leq 1$ for all $s\in \mathbb{R}$;
		\item[$(g_4)$] $\frac{1}{2}g(s)\leq sg^{\prime}(s)\leq g(s)$ for all $s>0$;
		\item[$(g_5)$] $|g(s)|\leq |s|$ for all $s\in \mathbb{R}$;
		\item[$(g_6)$] $|g(s)|\leq 2^{1/{(2p)}}|s|^{1/2}$ for all $s\in \mathbb{R}$;
		\item[$(g_7)$] $\ds \lim_{s\to+\infty}\frac {g(s)}{s^{\frac 12}}=2^{\frac{1}{2p}}$;
		\item[$(g_8)$]  $|g(s)|\geq g(1)|s|$ for $|s|\leq 1$ and $|g(s)|\geq g(1)|s|^{1/2}$ for $|s|\geq 1$;
		\item[$(g_9)$] $g^{\prime \prime}(s)<0$ when $s>0$ and $g^{\prime \prime}(s)>0$ when $s<0$.
		\item[$(g_{10})$] $\displaystyle\lim_{s\to 0}\frac{g(s)}{s}=1.$ 
		\item[$(g_{11})$] $|g(s)g^{\prime}(s)|<\frac{1}{2^{\frac{p-1}{p}}}$ for all $s\in \mathbb{R}$;
		\item[$(g_{12})$] $g''(s)=-2^{p-1}(g(s))^{p-1}(g'(s))^{2+p},$ $s>0.$
	\end{itemize}
\end{lemma}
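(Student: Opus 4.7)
The backbone of everything is the defining ODE $g'(s)=1/(1+2^{p-1}|g(s)|^p)^{1/p}$ together with the odd extension. My plan is to first settle $(g_1)$--$(g_3)$, $(g_5)$, $(g_{10})$ directly from the ODE, then derive the explicit formula in $(g_{12})$ which drives the concavity $(g_9)$, then extract the asymptotic and comparison statements $(g_4)$, $(g_6)$--$(g_8)$, $(g_{11})$ by elementary monotonicity arguments.

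\textbf{Local theory and elementary bounds.} The right-hand side $F(t):=1/(1+2^{p-1}|t|^p)^{1/p}$ is smooth, bounded by $1$, and globally Lipschitz, so the Cauchy problem $g'=F(g)$, $g(0)=0$ has a unique $C^\infty$ solution on $[0,\infty)$; the odd extension in \eqref{g} furnishes a $C^\infty$ solution on all of $\R$, proving $(g_1)$--$(g_2)$. Since $F>0$, $g$ is strictly increasing, hence invertible, and $0<g'\le 1$, giving $(g_3)$; integrating gives $(g_5)$; and $g'(0)=F(0)=1$ yields $(g_{10})$.

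\textbf{Derivative identity and concavity.} Raising the ODE to the $p$-th power gives $(1+2^{p-1}g^p)(g')^p=1$ on $[0,\infty)$. Differentiating in $s$ and solving for $g''$ produces
\[g''(s)=-2^{p-1}g(s)^{p-1}(g'(s))^{p+2},\qquad s>0,\]
which is $(g_{12})$; the sign statement $(g_9)$ for $s>0$ is then immediate, and the case $s<0$ follows from the odd symmetry of $g$ (which makes $g''$ odd). The right inequality in $(g_4)$ now follows from concavity: $g(s)=\int_0^s g'(t)\,dt\ge s\,g'(s)$ because $g'$ is nonincreasing on $[0,\infty)$. For $(g_{11})$ I compute $|g\,g'|=|g|/(1+2^{p-1}|g|^p)^{1/p}$; its derivative with respect to $|g|$ is strictly positive and its supremum as $|g|\to\infty$ equals $2^{-(p-1)/p}$, giving the strict bound.

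\textbf{Asymptotics, the lower bound in $(g_4)$, and $(g_6)$--$(g_8)$.} The main obstacle is the left inequality $g(s)\le 2sg'(s)$ in $(g_4)$. I will prove it by considering $\phi(s):=g(s)^2-2^{1/p}\,s\,[\,\text{correction term}\,]$ arising naturally from the identity $(g')^p(1+2^{p-1}g^p)=1$. Concretely, multiplying the ODE by $g^{p-1}$ on $(0,\infty)$ and using $(g')^p\cdot 2^{p-1}g^p=1-(g')^p$ one sees $2^{p-1}g^{p-1}g'=((g')^{-p}-1)\cdot g'/g$ in a form from which an energy-type quantity can be monotonically tracked; alternatively, and this is my preferred route, I set $h(s)=2sg'(s)-g(s)$, note $h(0)=0$, and use $(g_{12})$ to rewrite $h'(s)=g'(s)\bigl[1-2^p s\,g^{p-1}(g')^{p+1}\bigr]$ and then exploit $(g')^p(1+2^{p-1}g^p)=1$ together with the already-established $g(s)\ge sg'(s)$ to conclude $h'\ge 0$ on $(0,\infty)$, hence $h\ge 0$. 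Once $(g_4)$ is in hand, $(g_7)$ follows by integrating $g\,g'\to 2^{-(p-1)/p}$ (the limit of the ODE for large $g$), giving $g(s)^2\sim 2^{1/p}s$; the inequality $g(s)^2\le 2^{1/p}s$ for all $s>0$ (and hence $(g_6)$) is obtained by showing $(g^2)'=2gg'\le 2^{1/p}$ via $2gg'=2g/(1+2^{p-1}g^p)^{1/p}\le 2^{1/p}$ (which holds because $(1+2^{p-1}g^p)\ge 2^{p-1}g^p\cdot\,[\ldots]$ after handling small $g$ separately). Finally $(g_8)$ follows from concavity: on $(0,1]$ the function $s\mapsto g(s)/s$ is nonincreasing (by $(g_9)$ and $g(0)=0$), so $g(s)/s\ge g(1)$; on $[1,\infty)$ the function $\psi(s):=g(s)/s^{1/2}$ satisfies $\psi'(s)=(2sg'(s)-g(s))/(2s^{3/2})\ge 0$ by the newly proved left half of $(g_4)$, so $\psi(s)\ge\psi(1)=g(1)$. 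Odd symmetry extends $(g_8)$ to $s<0$.
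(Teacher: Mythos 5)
First, a remark on the comparison: the paper does not actually prove Lemma \ref{L1} at all --- it only points to \cite{CJ,do} --- so you are supplying an argument where the authors give none. Most of what you write is correct and standard: the existence/uniqueness and the elementary items $(g_1)$--$(g_3)$, $(g_5)$, $(g_{10})$; the identity $(g_{12})$ obtained by differentiating $(1+2^{p-1}g^p)(g')^p=1$ and hence $(g_9)$; the right half of $(g_4)$ from concavity; the monotonicity computation behind $(g_{11})$; the bound $(g_6)$ via $(g^2)'=2gg'\le 2g/(2^{p-1}g^p)^{1/p}=2^{1/p}$; and the deduction of $(g_8)$ and $(g_7)$ from $(g_4)$, $(g_6)$, $(g_9)$.

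The genuine gap is your proof of the left inequality $\tfrac12 g(s)\le sg'(s)$. You set $h(s)=2sg'(s)-g(s)$ and claim $h'(s)=g'(s)\bigl[1-2^{p}s\,g^{p-1}(g')^{p+1}\bigr]\ge 0$ on $(0,\infty)$ by inserting the already-known $sg'\le g$. But that substitution only yields $2^{p}s\,g^{p-1}(g')^{p+1}\le 2^{p}g^{p}(g')^{p}=2\cdot\frac{2^{p-1}g^p}{1+2^{p-1}g^p}<2$, not the bound $\le 1$ that you need. Worse, the assertion $h'\ge0$ everywhere is false: one checks $2^{p}s\,g^{p-1}(g')^{p+1}\to 1$ as $s\to\infty$, and for $p=2$ the explicit formula $h(s)=\frac{\sinh^{-1}(\sqrt{2}\,g(s))}{\sqrt{2}\sqrt{1+2g(s)^2}}$ shows that $h$ increases from $0$ and then decreases back to $0$, so $h'$ must change sign (numerically $2^{p}s\,g^{p-1}(g')^{p+1}\approx 1.07>1$ at $g=2$). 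The conclusion $h\ge0$ is nonetheless true, and the clean repair is to work in the variable $g$ instead of $s$: since $s(g)=\int_0^g(1+2^{p-1}t^p)^{1/p}\,dt$ and $1/g'=(1+2^{p-1}g^p)^{1/p}$, the inequality $2sg'\ge g$ is equivalent to $A(g):=2\int_0^g(1+2^{p-1}t^p)^{1/p}\,dt-g(1+2^{p-1}g^p)^{1/p}\ge 0$, and a one-line differentiation gives $A'(g)=(1+2^{p-1}g^p)^{(1-p)/p}>0$ together with $A(0)=0$. Your alternative ``energy-type quantity'' route is too vague to evaluate and should be replaced by this (or an equivalent) computation; with that fix the downstream deductions of $(g_7)$ and the second half of $(g_8)$ go through exactly as you wrote them.
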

\noi	After applying the change of variable  $w=g^{-1}(u),$ we define the new functional $\mc I_{\la}:D^{1,p}(\mb R^N)\to \mb R$ as
\begin{align}\label{energy} 
	\mc I_{\la}(w)&=\frac{a}{p}\displaystyle\int_{{\mathbb R^N}} |\nabla w|^{p}dx+\frac{ b}{2p}\left( \int_{\mathbb R^N}|g^{\prime}(w)|^p|\nabla w|^pdx\right)^2\notag\\ &\qquad\quad-\frac{\la}{q}\int_{\mathbb R^N}f(x)|g(w)|^q dx -\frac {1}{4p_{\beta,\mu}^*}\int_{{\mathbb R^N}}\left(\int_{{\mathbb R^N}} \frac{|g(w(y))|^{2p_{\beta,\mu}^*}}{ |y|^\beta|x-y|^\mu}dy\right)\frac{|g(w(x))|^{2p_{\beta,\mu}^*}}{|x|^\beta}dx.
\end{align}
Note that, if   $w \in D^{1,p}(\mb R^N)$ is a critical point of the functional $\mc I_{\la}$, then for every $v \in D^{1,p}(\mb R^N)$, $\langle \mc I_{\la}^{\prime}(w), v\rangle =0$. That is,
{\begin{align}\label{2.5}
		&a\int_{{\mathbb R^N}} |\nabla w|^{p-2}\nabla w\nabla v dx+ b\int_{\mathbb R^N}|g^{\prime}(w)|^p|\nabla w|^p dx \int_{\mb R^N}\left(|g^{\prime}(w)|^p|\nabla w|^{p-2}\nabla w\nabla v + |g^{\prime}(w)|^{p-2} g^{\prime}(w) g^{\prime\prime}(w)|\nabla w|^p v \right) dx \notag\\
		&\qquad- \la\int_{\mb R^N}  f(x) |g(w)|^{q-2} g(w) g^{\prime}(w) v dx-\int_{{\mathbb R^N}} \left(\ds\int_{\mathbb R^N}\frac{|g(w)|^{2p_{\beta,\mu}^{*}}}{|x-y|
			^{\mu}|y|^{\beta}}dy\right)\frac{|g(w)|^{2p_{\beta,\mu}^{*}-2} g(w)}{|x|^{\beta}} g^{\prime}(w) v dx  =0
\end{align}} 
and $w$ is a weak solution to the following problem:
\begin{align}\label{2p}
		&- a\Delta_p w-b\int_{\mb R^N}|g^{\prime}(w)|^p|\nabla w|^p dx\cdot \left(|g^{\prime}(w)|^{p-2}g^{\prime}(w)g^{\prime\prime}(w)|\nabla w|^p+|g^{\prime}(w)|^p{div(|\nabla w|^{p-2}\nabla w) }\right)  \notag\\ &\qquad = \la f(x) |g(w)|^{q-2} g(w) g^{\prime}(w)+ \left(\ds\int_{\mathbb R^N}\frac{|g(w)|^{2p_{\beta,\mu}^{*}}}{|x-y|^{\mu}|y|^{\beta}}dy\right)\frac{|g(w)|^{2p_{\beta,\mu}^{*}-2} g(w)}{|x|^{\beta}} g^{\prime}(w)   \; \text{\;\; in \;}\; \mathbb R^N
\end{align}
See  that the transformed problem \eqref{2p} is equivalent to \eqref{eq1} which takes $u = g(w)$ as its solutions. Thus, now our aim is to find the  solutions to  \eqref{2p}.

\section{\bf Compactness arguments}
\noi In this section, first we recall the definition of Palais-Smale sequence.
\begin{definition}
	Let $\mc J: X\rightarrow \mathbb R$ be a $C^{1}$ functional on a Banach space $X$.
	\begin{enumerate}
		\item For $c\in \mb R$, a sequence $\{u_n\}\subset X$ is a Palais-Smale sequence at level $c$ (in short $(PS)_c$) in $X$ for $\mc J$ if $\mc J(u_n)=c +o_{n}(1)$ and $\mc J^{\prime}(u_n) \rightarrow 0$ in $X^{*}$(dual of $X$) as $k\rightarrow \infty.$
		\item We say $\mc J$ satisfies $(PS)_{c}$ condition if for any Palais-Smale sequence $\{u_n\}$ in $X$ for $\mc J$ has a convergent subsequence.
	\end{enumerate}
\end{definition}
\noi Next, we state the following concentration compactness Lemms due to Lions \cite{pl1}.



\begin{lemma}\label{y1}
	Let $\{u_n\}$ be a bounded sequence in $D^{1, p}(\mathbb R^N)$ converging weakly and a.e. to $u\in D^{1, p}(\mathbb R^N)$ such that $|\na u_n|^p\rightharpoonup \nu$, $|u_n|^{p^*}\rightharpoonup \omega$ in the sense of measure. Then, for at most countable set $J$, there exist families of distinct points $\{\nu_j: j\in J\}$ and $\{\omega_j: j\in J\}$ in $\mathbb R^N$ satisfying
	\begin{align*}
		\nu &\geq  |\na u|^p + \sum_{i \in J} \nu_j \delta_{z_j},\;\nu_j>0,\\
		\omega &= |u|^{p^*} + \sum_{i \in J} \omega_j \delta_{z_j},\; \omega_j>0,\\
		& S \omega_j^{\frac{p}{p^*}} \leq  \nu_j,
	\end{align*}
	where $\nu$, $\omega$ are bounded and nonnegative measures on $\mathbb R^N$ and $\delta_{z_j}$ is the Dirac mass at $z_j$.
\end{lemma}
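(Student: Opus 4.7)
The plan is to follow Lions's original concentration-compactness strategy, reducing to the case where the weak limit vanishes and then extracting the atoms from the defect measures via a reverse H\"older inequality on the Sobolev quotient. First I would set $v_n := u_n - u$, so that $v_n \rightharpoonup 0$ weakly in $D^{1,p}(\mathbb R^N)$ and a.e.\ in $\mathbb R^N$. A Brezis--Lieb type argument then gives
\[
\int_{\mathbb R^N} |u_n|^{p^*} \phi \, dx - \int_{\mathbb R^N} |v_n|^{p^*} \phi \, dx \longrightarrow \int_{\mathbb R^N} |u|^{p^*} \phi \, dx
\]
for every $\phi \in C_c(\mathbb R^N)$, so there is a nonnegative measure $\tilde\omega := \omega - |u|^{p^*}$ with $|v_n|^{p^*} \rightharpoonup \tilde\omega$ in the sense of measures. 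For the gradient side, weak lower semicontinuity yields $\nu \geq |\nabla u|^p$, and after passing to a subsequence I may write $\nu = |\nabla u|^p + \tilde\nu$ for some nonnegative Radon measure $\tilde\nu$ (this is where the inequality, rather than equality, in the statement comes from: in general $|\nabla v_n|^p \rightharpoonup \tilde\nu'$ with $\tilde\nu' \leq \tilde\nu$ but not necessarily equal).

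The crucial step is to couple $\tilde\nu$ and $\tilde\omega$ through the Sobolev inequality. Take any $\varphi \in C_c^\infty(\mathbb R^N)$ and apply \eqref{se} to $\varphi v_n \in D^{1,p}(\mathbb R^N)$, obtaining
\[
S \left( \int_{\mathbb R^N} |\varphi v_n|^{p^*} dx \right)^{p/p^*} \leq \int_{\mathbb R^N} |\nabla(\varphi v_n)|^p\, dx.
\]
Since $v_n \to 0$ strongly in $L^p_{\loc}(\mathbb R^N)$ by Rellich's theorem, the cross terms containing $v_n \nabla \varphi$ on the right vanish in the limit, leaving
\[
S \left( \int_{\mathbb R^N} |\varphi|^{p^*} d\tilde\omega \right)^{p/p^*} \leq \int_{\mathbb R^N} |\varphi|^p \, d\tilde\nu.
\]
This reverse H\"older-type inequality between $\tilde\nu$ and $\tilde\omega$ is the heart of the matter.

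From this inequality I would invoke the standard fact (essentially Lions's lemma on measures dominated in the reverse H\"older sense, proved by an exhaustion argument testing with cut-offs supported in shrinking balls) to conclude that $\tilde\omega$ is purely atomic: there exist a countable set of distinct points $\{z_j\}_{j \in J} \subset \mathbb R^N$ and positive weights $\omega_j$ with $\tilde\omega = \sum_{j \in J} \omega_j \delta_{z_j}$. Testing the reverse H\"older inequality with approximations of $\mathbf 1_{\{z_j\}}$ then forces $\tilde\nu \geq \sum_{j \in J} \nu_j \delta_{z_j}$ with $S \omega_j^{p/p^*} \leq \nu_j$, which combined with $\nu \geq |\nabla u|^p + \tilde\nu$ gives the desired structure for $\nu$ and $\omega$.

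The main obstacle I anticipate is the passage to the limit on the gradient side, since for $p \neq 2$ the map $w \mapsto |\nabla w|^p$ is only convex and not a quadratic form; one cannot expand $|\nabla(\varphi v_n)|^p$ algebraically. I would handle this by the elementary inequality
\[
\bigl| |\nabla(\varphi v_n)|^p - |\varphi \nabla v_n|^p \bigr| \leq C \bigl( |\nabla \varphi|^p |v_n|^p + |\nabla \varphi|\, |v_n|\, (|\nabla v_n|^{p-1} + |\nabla \varphi|^{p-1} |v_n|^{p-1})\bigr),
\]
and then using the Rellich compactness of $v_n$ in $L^p_{\loc}$ together with the uniform bound on $\|\nabla v_n\|_p$ (via H\"older) to see that all terms containing a factor of $v_n$ vanish in the limit. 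A secondary delicate point is justifying the equality $\omega = |u|^{p^*} + \sum \omega_j \delta_{z_j}$ (as opposed to mere inequality): this follows because Brezis--Lieb gives exact cancellation on the $L^{p^*}$ side, so $\tilde\omega$ is concentrated only at the atoms with no diffuse part, which is ensured by the reverse H\"older inequality above. The gradient measure, in contrast, can only be controlled from below, which is why the statement uses $\nu \geq \cdots$.
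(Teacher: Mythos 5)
The paper offers no proof of this lemma: it is quoted as Lions's concentration-compactness lemma with a citation to \cite{pl1}, so there is no in-paper argument to compare yours against. Your proposal is exactly the standard Lions proof (the Brezis--Lieb splitting $v_n=u_n-u$, the reverse H\"older inequality between the defect measures obtained from the Sobolev inequality \eqref{se} applied to $\varphi v_n$, and Lions's lemma on reverse-H\"older-dominated measures to extract the atoms), and it is correct in outline. The one imprecise step is the claim that the weak-$*$ limit $\tilde\nu'$ of $|\nabla v_n|^p$ satisfies $\tilde\nu'\le \nu-|\nabla u|^p$ as measures; for $p\neq 2$ this is neither obvious (no Brezis--Lieb identity is available for gradients without a.e.\ convergence of $\nabla u_n$) nor needed in full. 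What your argument actually uses is only the coincidence of the atomic values, $\tilde\nu'(\{z_j\})=\nu(\{z_j\})$, which follows from $\bigl||\nabla u_n|^p-|\nabla v_n|^p\bigr|\le C\bigl(|\nabla u_n|^{p-1}+|\nabla u|^{p-1}\bigr)|\nabla u|$ together with the absolute continuity of $\int_{B_\varepsilon(z)}|\nabla u|^p\,dx$, uniformly in $n$; alternatively, apply the Sobolev inequality to $\varphi u_n$ rather than $\varphi v_n$ on the gradient side so that $\nu$ appears directly.
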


\begin{lemma}\label{y2}
	Let \(\{u_n\}\subset  D^{1,p}(\mathbb R^N)\) be a sequence in  as in Lemma \ref{y1} and defined
	\[ \nu_{\infty}:=\lim_{R\ra\infty}\lim_{ n\ra\infty} \int_{|x|>R} |\na u_n|^p dx,\quad  \omega_{\infty}= \lim_{R\ra\infty}\lim_{ n\ra\infty} \int_{|x|>R} |u_n|^{p^*} dx\]
	Then it follows that
	\[S \omega_{\infty}^{p/p^*} \leq \nu_{\infty}\]
	and
	\[\limsup_{n\ra \infty} \int_{\mathbb R^N} |\na u_n|^p dx = \nu_{\infty} +\int_{\mathbb R^N} d\nu, \quad \limsup_{n\ra \infty} \int_{\mathbb R^N} |u_n|^{p^*} dx = \omega_{\infty} +\int_{\mathbb R^N} d\omega\]
\end{lemma}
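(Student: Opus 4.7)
The plan is to introduce a radial smooth cutoff that isolates the mass at infinity and to pair it with the Sobolev inequality \eqref{se} and with the weak convergences $|\nabla u_n|^p \rightharpoonup \nu$, $|u_n|^{p^*}\rightharpoonup \omega$ already provided by Lemma~\ref{y1}. Concretely, fix $\psi\in C^\infty([0,\infty);[0,1])$ with $\psi\equiv 0$ on $[0,1]$ and $\psi\equiv 1$ on $[2,\infty)$, and set $\psi_R(x):=\psi(|x|/R)$, so that $\psi_R\equiv 0$ on $B_R$, $\psi_R\equiv 1$ on $\mathbb{R}^N\setminus B_{2R}$, and $|\nabla\psi_R|\le C/R$ with $\supp(\nabla\psi_R)\subset A_R:=\{R<|x|<2R\}$. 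Then $1-\psi_R$ is a continuous, compactly supported test function, while $\psi_R$ captures the tail at infinity; this is the splitting device throughout.

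For the two energy-splitting identities, I would write
\[
\int_{\mathbb R^N}|\nabla u_n|^p\,dx=\int_{\mathbb R^N}(1-\psi_R)|\nabla u_n|^p\,dx+\int_{\mathbb R^N}\psi_R|\nabla u_n|^p\,dx.
\]
The first summand converges, as $n\to\infty$, to $\int(1-\psi_R)\,d\nu$ by weak convergence of measures (legitimate since $1-\psi_R\in C_c(\mathbb R^N)$), and $\int(1-\psi_R)\,d\nu\to\int d\nu$ as $R\to\infty$ by monotone convergence applied to the finite measure $\nu$. The second summand is sandwiched by $\int_{|x|>2R}|\nabla u_n|^p\,dx\le \int\psi_R|\nabla u_n|^p\,dx\le\int_{|x|>R}|\nabla u_n|^p\,dx$; taking $\lim_n$ and then $\lim_R$ produces exactly $\nu_\infty$ on both sides by definition. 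Putting these together yields $\limsup_n\int|\nabla u_n|^p\,dx=\int d\nu+\nu_\infty$, and the identical computation (replace $|\nabla u_n|^p$ by $|u_n|^{p^*}$ and $\nu$ by $\omega$) gives the twin identity for $\omega$.

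For the Sobolev inequality at infinity, I would test \eqref{se} with $v_n:=u_n\psi_R\in D^{1,p}(\mathbb{R}^N)$, obtaining $S\|v_n\|_{p^*}^p\le\|\nabla v_n\|_p^p$. The double limit $\lim_R\lim_n\|v_n\|_{p^*}^{p^*}=\omega_\infty$ follows by the same sandwich argument applied to $\psi_R^{p^*}|u_n|^{p^*}$. For the right-hand side, Minkowski's inequality gives
\[
\|\nabla v_n\|_p\le\|\psi_R\nabla u_n\|_p+\|u_n\nabla\psi_R\|_p.
\]
The first term tends to $\nu_\infty^{1/p}$ in the double limit, by the same sandwiching. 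The cross term is the key point: since $\supp(\nabla\psi_R)\subset A_R$ and $|A_R|\le C R^N$, H\"older with exponents $p^*/p$ and $N/p$ combined with $|\nabla\psi_R|\le C/R$ yields
\[
\|u_n\nabla\psi_R\|_p^p\le\frac{C^p}{R^p}\,|A_R|^{p/N}\,\Big(\int_{A_R}|u_n|^{p^*}dx\Big)^{p/p^*}\le C'\Big(\int_{A_R}|u_n|^{p^*}dx\Big)^{p/p^*}.
\]
By Rellich–Kondrachov applied on the bounded annulus, $u_n\to u$ in $L^p(A_R)$, and since $u\in L^{p^*}(\mathbb R^N)$ we have $\|u\|_{L^{p^*}(A_R)}\to 0$ as $R\to\infty$; hence $\lim_R\lim_n\|u_n\nabla\psi_R\|_p=0$. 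Combining, $\lim_R\lim_n\|\nabla v_n\|_p^p\le\nu_\infty$, and plugging into $S\|v_n\|_{p^*}^p\le\|\nabla v_n\|_p^p$ yields $S\omega_\infty^{p/p^*}\le\nu_\infty$.

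The main technical obstacle is controlling the cross-term $\|u_n\nabla\psi_R\|_p$: one cannot discard it by weak convergence alone (its integrand is a genuinely nonlinear combination of $u_n$ and a non-compactly-supported factor). The remedy above relies on two ingredients working in tandem — the Rellich compact embedding on the annulus $A_R$ to pass to the limit in $n$, and the tail decay of $u\in L^{p^*}(\mathbb R^N)$ to kill the limit in $R$. A second minor technical point to be careful about is that the quantities $\lim_n\int_{|x|>R}|\nabla u_n|^p\,dx$ and $\lim_n\int_{|x|>R}|u_n|^{p^*}\,dx$ need to exist for each $R$; as in Lemma~\ref{y1} this is arranged by passing to a (diagonal) subsequence, after which all equalities above become genuine equalities rather than $\limsup$ relations.
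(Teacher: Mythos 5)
The paper does not actually prove Lemma \ref{y2}: it is stated without proof as a known result of Lions (cited as \cite{pl1}), so there is no in-paper argument to compare against, and your write-up is a reconstruction of the standard proof. Your treatment of the two energy-splitting identities is correct and is exactly the classical argument: pairing the compactly supported part $1-\psi_R$ with the weak-$*$ limits $\nu,\omega$, sandwiching $\int\psi_R|\nabla u_n|^p$ between $\int_{|x|>2R}$ and $\int_{|x|>R}$, and passing to a subsequence so that the iterated limits defining $\nu_\infty,\omega_\infty$ exist. The overall strategy for $S\omega_\infty^{p/p^*}\le\nu_\infty$ (test \eqref{se} with $u_n\psi_R$ and kill the cross term) is also the right one.

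There is, however, a genuine gap in your handling of the cross term. After H\"older you arrive at
\[
\|u_n\nabla\psi_R\|_p^p\;\le\; C'\Bigl(\int_{A_R}|u_n|^{p^*}\,dx\Bigr)^{p/p^*},
\]
and you then invoke Rellich--Kondrachov to pass to the limit in $n$. But Rellich gives compactness of $W^{1,p}(A_R)\hookrightarrow L^p(A_R)$ only for subcritical exponents; the embedding into $L^{p^*}(A_R)$ is \emph{not} compact, so $\lim_n\|u_n\|_{L^{p^*}(A_R)}$ need not equal $\|u\|_{L^{p^*}(A_R)}$ (a concentrating bubble centered in the annulus is the obstruction), and the subsequent appeal to $\|u\|_{L^{p^*}(A_R)}\to0$ does not connect to the displayed bound, which involves $u_n$ rather than $u$. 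The standard repair is to reverse the order of the two steps: first bound
\[
\|u_n\nabla\psi_R\|_p^p\;\le\;\frac{C^p}{R^p}\int_{A_R}|u_n|^p\,dx\;\xrightarrow[n\to\infty]{}\;\frac{C^p}{R^p}\int_{A_R}|u|^p\,dx
\]
using Rellich in $L^p(A_R)$ (which is legitimate), and only then apply H\"older to the \emph{limit} function:
\[
\frac{C^p}{R^p}\int_{A_R}|u|^p\,dx\;\le\;\frac{C^p}{R^p}\,|A_R|^{p/N}\Bigl(\int_{A_R}|u|^{p^*}\,dx\Bigr)^{p/p^*}\;\le\;C''\,\|u\|_{L^{p^*}(A_R)}^{p}\;\xrightarrow[R\to\infty]{}\;0,
\]
since $u\in L^{p^*}(\mathbb R^N)$. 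All the ingredients you name are the right ones; they are just assembled in an order in which the compactness you need is unavailable.
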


Now we prove the following concentration-compactness lemma related to our problem.
\begin{lemma}\label{c1}
Let  $\beta\geq 0,$ $\mu>0,$ $0<2\beta+\mu<N$ and  $2\leq p<N$. If \(\{u_n\}\) is a bounded sequence in \(D^{1,p}(\mathbb  R^N)\) converges weakly and a.e in $\mb R^N$ to some \(u\in D^{1,p}(\mb R^N)\) as \(n\ra \infty\) and such that \(|u_n|^{p^*} \rightharpoonup \omega \) and \(|\na u_n|^p \rightharpoonup \nu\) in the sense of measure. Assume that
	\[\left(\int_{\mathbb R^N}\frac{|u_{n}(y)|^{p^{*}_{\ba,\mu}}}{|y|^{\ba}|x-y|^{\mu}}\right)\frac{|u_{n}(x)|^{p^{*}_{\ba,\mu}}}{|x|^{\ba}} \rightharpoonup \zeta\]
	weakly in the sense of measure, where \(\zeta\) is a bounded positive measure on \(\mathbb R^N\) and define
	\[\nu_{\infty} := \lim_{R\ra \infty} \limsup_{n\ra \infty} \int_{|x|\geq R} |\na u_n|^p dx, \quad \omega_{\infty}:= \lim_{R\ra \infty} \limsup_{n\ra \infty} \int_{|x|\geq R} |u_n|^{p^*} dx. \]
	\[ \zeta_{\infty} := \lim_{R\ra \infty} \limsup_{n\ra \infty} \int_{|x|> R}\left(\int_{\mathbb R^N}\frac{|u_{n}(y)|^{p^{*}_{\ba,\mu}}}{|y|^{\ba}|x-y|^{\mu}}dy\right)\frac{|u_{n}(x)|^{p^{*}_{\ba,\mu}}}{|x|^{\ba}} dx.\]
	Then there exists a countable sequence of points \(\{z_j\}_{j\in J}\subset\mathbb R^N\) and families of positive numbers $\{\omega_j : j\in J\}$, $\{\zeta_j: j\in J\}$ and $\{\nu_j: j\in I\}$ such that
	\begin{align}
		\zeta & = \left(\int_{\mathbb R^N}\frac{|u(y)|^{p^{*}_{\ba,\mu}}}{|y|^{\ba}|x-y|^{\mu}}\right) \frac{|u(x)|^{p^{*}_{\ba,\mu}}}{|x|^{\ba}} +\sum_{j\in J} \zeta_{j} \delta_{z_j},\quad \sum_{j\in J} \zeta_j^{\frac{1}{p^{*}_{\ba,\mu}}} < \infty,\label{l11}\\
		\nu &\geq |\na u|^p + \sum_{j\in J} \nu_{j} \delta_{z_j},\label{l2}\\
		\omega &\geq  |u|^{p^{*}} + \sum_{j\in J} \omega_{j} \delta_{z_j},\label{l3}
	\end{align}
	and
	\begin{align}
		S_{\ba, \mu} \zeta_{j}^{\frac{p}{2 p^{*}_{\ba,\mu}}}\leq \nu_j, \;\;\;\mbox{and} \; \;\;\zeta_{j}^{\frac{N}{2N-2\ba- \mu}} \leq C(N,\ba, \mu)^{\frac{N}{2N-2\ba- \mu}} \omega_j.\label{l4}
	\end{align}
	where \(\delta_z\) is the Dirac-mass of mass \(1\) concentrated at \(z \in\mathbb R^N\).
	
	\noi	For the energy at infinity, we have
	\[\limsup_{n\ra \infty} \int_{\mathbb R^N } |\na u_n|^p dx= \nu_{\infty} + \int_{\mathbb R^N} d\nu, \quad \limsup_{n\ra \infty} \int_{\mathbb R^N} |u_n|^{p^*} dx = \omega_{\infty} + \int_{\mathbb R^N} d\omega \]
	\[\limsup_{n\ra \infty} \int_{\mathbb R^N}\int_{\mathbb R^N}\frac{|u(y)|^{p^{*}_{\ba,\mu}}|u(x)|^{p^{*}_{\ba,\mu}}}{|x|^{\ba}|x-y|^{\mu}|y|^{\ba}}   dxdy = \zeta_{\infty}+ \int_{\mathbb R^N} d\zeta,\]
	and
	\[C(N,\ba,\mu)^{-\frac{2N}{2N-2\ba-\mu}} \zeta_{\infty}^{\frac{2N}{2N-2\ba-\mu}} \leq \omega_{\infty} \left(\int_{\mathbb R^N} d\omega + \omega_{\infty}\right), \]
	\[S^{p} C(N, \ba,\mu)^{-\frac{p}{p_{\ba,\mu}^{*}}} \zeta_{\infty}^{\frac{p}{p_{\ba,\mu}^{*}}} \leq \nu_{\infty} \left(\int_{\mathbb R^N} d\nu + \nu_{\infty}\right).\]
\end{lemma}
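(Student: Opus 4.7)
The plan is to adapt the classical Lions concentration--compactness framework (Lemmas \ref{y1}, \ref{y2}) to accommodate the doubly weighted convolution term, leveraging the Stein--Weiss inequality \eqref{HLSineq} and the weighted Sobolev constant $S_{\beta,\mu}$ defined in \eqref{bc1}. Inequalities \eqref{l2} and \eqref{l3} come for free from Lemma \ref{y1} applied to $\{u_n\}$, so the novel content concerns the structure of $\zeta$, the atomic inequalities \eqref{l4}, and the mass--at--infinity estimates. I would set $v_n:=u_n-u$, so that $v_n \rightharpoonup 0$ in $D^{1,p}(\mathbb{R}^N)$ and $v_n\to 0$ a.e., and first establish a Brezis--Lieb type decomposition for the doubly weighted integral: namely
\[
\int_{\mathbb R^N}\!\!\int_{\mathbb R^N}\frac{|u_n(x)|^{p^*_{\beta,\mu}}|u_n(y)|^{p^*_{\beta,\mu}}}{|x|^\beta|x-y|^\mu|y|^\beta}\,dx\,dy = \int_{\mathbb R^N}\!\!\int_{\mathbb R^N}\frac{|u(x)|^{p^*_{\beta,\mu}}|u(y)|^{p^*_{\beta,\mu}}}{|x|^\beta|x-y|^\mu|y|^\beta}\,dx\,dy + \int_{\mathbb R^N}\!\!\int_{\mathbb R^N}\frac{|v_n(x)|^{p^*_{\beta,\mu}}|v_n(y)|^{p^*_{\beta,\mu}}}{|x|^\beta|x-y|^\mu|y|^\beta}\,dx\,dy + o_n(1),
\]
which upgrades to the measure identity \eqref{l11} once we identify the concentration points of the residual. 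This decomposition reduces the analysis to $\{v_n\}$, which concentrates mass only at isolated atoms $\{z_j\}$ and possibly at infinity.

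For the atomic part, I would localize by a cut--off: fix a point $z_j$ from the set of atoms of $\omega$ (which is at most countable by Lemma \ref{y1}) and take $\phi\in C_c^\infty(\mathbb R^N)$ with $0\le\phi\le 1$, $\phi\equiv 1$ on $B_\varepsilon(z_j)$, $\supp\phi\subset B_{2\varepsilon}(z_j)$. Applying the Stein--Weiss inequality \eqref{HLSineq} to $\phi\, v_n$ with exponents $t=s=\frac{2N}{2N-2\beta-\mu}$ yields
\[
\Bigl(\int_{\mathbb R^N}\!\!\int_{\mathbb R^N}\frac{|\phi v_n(x)|^{p^*_{\beta,\mu}}|\phi v_n(y)|^{p^*_{\beta,\mu}}}{|x|^\beta|x-y|^\mu|y|^\beta}\,dx\,dy\Bigr)^{\frac{1}{p^*_{\beta,\mu}}} \le C(N,\beta,\mu)\Bigl(\int_{\mathbb R^N} |\phi v_n|^{p^*}\,dx\Bigr)^{\frac{2N-2\beta-\mu}{N}}.
\]
Passing to the limit $n\to\infty$, using $v_n\to 0$ in $L^p_{\loc}$, and then $\varepsilon\to 0$, one obtains $\zeta_j^{N/(2N-2\beta-\mu)}\le C(N,\beta,\mu)^{N/(2N-2\beta-\mu)}\omega_j$ from the second half of \eqref{l4}. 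The first inequality of \eqref{l4} follows similarly: apply the definition \eqref{bc1} of $S_{\beta,\mu}$ to $\phi v_n$, giving $S_{\beta,\mu}\|\phi v_n\|_{\beta,\mu}^{p}\le\int|\nabla(\phi v_n)|^p\,dx$; the gradient commutator $|\nabla\phi|\,v_n$ contributes $o_n(1)$ because $v_n\to 0$ strongly in $L^p_{\loc}$, and in the limit the left side becomes $S_{\beta,\mu}\zeta_j^{p/(2p^*_{\beta,\mu})}$ while the right side is bounded by $\nu_j$ after sending $\varepsilon\to 0$. Summability $\sum \zeta_j^{1/p^*_{\beta,\mu}}<\infty$ is then immediate from the global Stein--Weiss bound on $\{u_n\}$.

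For the mass at infinity, I would use a cut--off $\psi_R$ with $\psi_R\equiv 0$ on $B_R(0)$, $\psi_R\equiv 1$ on $\mathbb R^N\setminus B_{2R}(0)$, $|\nabla\psi_R|\le C/R$. Applying the Stein--Weiss inequality to $\psi_R u_n$, taking $\limsup_{n\to\infty}$ and then $R\to\infty$, gives $C(N,\beta,\mu)^{-2N/(2N-2\beta-\mu)}\zeta_\infty^{2N/(2N-2\beta-\mu)}\le\omega_\infty\bigl(\int d\omega+\omega_\infty\bigr)$ once one splits the outer integral over $|x|>R$ and treats the inner integral on the full space (this splitting is why $\omega$ and $\omega_\infty$ both appear). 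The parallel bound involving $\nu_\infty$ follows by combining this with the Sobolev embedding $S\,\omega_\infty^{p/p^*}\le\nu_\infty$ from Lemma \ref{y2}. Finally, the energy identities $\limsup_n\int|\nabla u_n|^p=\nu_\infty+\int d\nu$, $\limsup_n\int|u_n|^{p^*}=\omega_\infty+\int d\omega$, and the analogous one for $\zeta$, come from decomposing $\mathbb R^N = B_R \cup (\mathbb R^N\setminus B_R)$, applying weak convergence of measures on the bounded part, and sending $R\to\infty$.

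The main obstacle I anticipate is the nonlocal coupling: when testing with a cut--off $\phi$ supported near $z_j$, the variable $y$ in the inner integral still ranges over all of $\mathbb R^N$, so controlling the cross terms $\int\!\!\int \phi(x)^{p^*_{\beta,\mu}} (1-\phi(y))^{p^*_{\beta,\mu}} \cdots /|x-y|^\mu$ requires combining the singularity of $|x-y|^{-\mu}$ away from the diagonal with the local vanishing of $v_n$ on compact sets disjoint from the atoms; the Brezis--Lieb type decomposition stated above and the uniform Stein--Weiss bound on $\{u_n\}$ are what tame this, ensuring the cross terms are $o_n(1)$. The mass--at--infinity estimate presents the same non--locality issue and is handled by the symmetric use of cut--offs $\psi_R$ together with the factor $\bigl(\int d\omega + \omega_\infty\bigr)$ absorbing the contribution of $y$ from the entire space.
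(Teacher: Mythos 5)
Your overall architecture coincides with the paper's: the same splitting $v_n=u_n-u$, the same appeal to Lions' Lemmas \ref{y1}--\ref{y2} for \eqref{l2}--\eqref{l3}, localization by cut-offs near each atom combined with the Stein--Weiss inequality \eqref{HLSineq} and the definition \eqref{bc1} of $S_{\beta,\mu}$ for \eqref{l4}, and cut-offs $\psi_R$ for the mass at infinity; the infinity portion of your sketch is essentially the paper's argument verbatim.

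The gap is at the decoupling step for the atoms, which is precisely the point you flag as the main obstacle and then dispose of by assertion. You claim the cross terms involving $\phi(x)$ and $1-\phi(y)$ are $o_n(1)$ because of ``the local vanishing of $v_n$ on compact sets disjoint from the atoms.'' But $v_n\to 0$ only in $L^p_{\loc}$, while $|v_n|^{p^*_{\beta,\mu}}$ is merely bounded in $L^{\frac{2N}{2N-2\beta-\mu}}(\mathbb{R}^N)$ and may concentrate at atoms $z_k$ lying \emph{outside} $\operatorname{supp}\phi$; there the kernel $|x-y|^{-\mu}|x|^{-\beta}|y|^{-\beta}$ is bounded below near $(z_j,z_k)$, so no smallness is available from local vanishing. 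The quantity you actually need to identify in the limit is $\int|\phi|^{2p^*_{\beta,\mu}}\,d\tau_3$, where $\tau_3$ is built from the density whose inner $y$-integral runs over all of $\mathbb{R}^N$, whereas the Stein--Weiss bound you apply controls the doubly cut-off energy of $\phi v_n$; bridging these two is exactly the content of the paper's key estimate \eqref{n32}, whose proof occupies most of the paper's argument. That proof does not proceed via a Brezis--Lieb decomposition (that is the separate Lemma \ref{sk}, which localizes nothing): it writes the discrepancy as $\Psi_n(x)=\bigl(\int\Phi(x,y)|v_n(y)|^{p^*_{\beta,\mu}}dy\bigr)\frac{|\phi v_n(x)|^{p^*_{\beta,\mu}}}{|x|^{\beta}}$ with $\Phi(x,y)=\frac{|\phi(y)|^{p^*_{\beta,\mu}}-|\phi(x)|^{p^*_{\beta,\mu}}}{|y|^{\beta}|x-y|^{\mu}}$ (the cancellation in the numerator taming the diagonal singularity so that $\Phi(x,\cdot)\in L^{r}$), establishes $\Psi_n\to 0$ a.e., derives a uniform $L^{1+s}$ bound on compact sets via Young's convolution inequality, and concludes by Vitali's theorem together with a tail estimate \eqref{n31}. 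Without this ingredient --- or some substitute for it --- your passage from the Stein--Weiss bound on $\phi v_n$ to the atomic inequalities \eqref{l4} and to the structure \eqref{l11} of $\zeta$ is unjustified. (A small additional slip: in your displayed Stein--Weiss application the left side carries the exponent $\frac{1}{p^*_{\beta,\mu}}$ while the right side carries $\frac{2N-2\beta-\mu}{N}$; these are inconsistent --- either drop the exponent on the left or replace the right-hand exponent by $\frac{2}{p^*}$.)
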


\begin{proof}
	Let $v_n:=u_n-u$. Then the sequence $\{v_n\}$ converges weakly to $0$ in $D^{1, p}(\mathbb R^N)$ and $v_n(x)\rightarrow 0$ a.e. in $\mathbb R^N$ as the bounded sequence $\{u_n\}$ converges weakly to $u$ in $D^{1, p}(\mathbb R^N)$. By Lemmas \ref{y1}- \ref{y2}, we have
	\begin{align*}
		|\na v_n|^p\rightharpoonup &\; \tau_1:=\nu-|\na u|^p,\\
		|v_n|^{p^*}\rightharpoonup &\; \tau_2:=\omega-|u|^{p^*},\\
		\left(\int_{\mathbb R^N}\frac{|v_n(y)|^{p_{\ba, \mu}^*}}{|y|^{\ba}|x-y|^{\mu}}dy\right)\frac{|v_n(x)|^{p_{\ba, \mu}^*}}{|x|^{\ba}}\rightharpoonup &\;\tau_3:=\zeta - \left(\int_{\mathbb R^N}\frac{|u(y)|^{p_{\ba, \mu}^*}}{|y|^{\ba}|x-y|^{\mu}}dy\right)\frac{|u(x)|^{p_{\ba, \mu}^*}}{|x|^{\ba}}.
	\end{align*}
	Firstly, we show that for every $\phi \in C_c^{\infty}(\mathbb R^N)$,
	\begin{align}\label{n32}
		\left|\int_{\mathbb R^N}	\left(\int_{\mathbb R^N}\frac{|\phi v_n(y)|^{p_{\ba,\mu}^*}}{|y|^{\ba}|x-y|^{\mu}}dy\right)\frac{|\phi v_n(x)|^{p_{\ba,\mu}^*}}{|x|^{\ba}}-	\int_{\mb R^N}\left(\int_{\mathbb R^N}\frac{|v_n(y)|^{p_{\ba,\mu}^*}}{|y|^{\ba}|x-y|^{\mu}}dy\right)\frac{|\phi(x)|^{p_{\ba,\mu}^*}|\phi v_n(x)|^{p_{\ba,\mu}^*}}{|x|^{\ba}}\right|\rightarrow 0,
	\end{align} as $n\to\infty.$
	For this, we denote \[\Psi_n(x):=\left(\int_{\mathbb R^N}\frac{|\phi v_n(y)|^{p_{\ba,\mu}^*}}{|y|^{\ba}|x-y|^{\mu}}dy\right)\frac{|\phi v_n(x)|^{p_{\ba,\mu}^*}}{|x|^{\ba}}-	\left(\int_{\mathbb R^N}\frac{|v_n(y)|^{p_{\ba,\mu}^*}}{|y|^{\ba}|x-y|^{\mu}}dy\right)\frac{|\phi(x)|^{p_{\ba, \mu}^*} |\phi v_n(x)|^{p_{\ba,\mu}^*}}{|x|^{\ba}}.\] 
	As $\phi \in C_c^{\infty}(\mathbb R^N)$, so we have for every $\delta >0$, there exists $K>0$ such that
	\begin{align}\label{n31}
		\int_{|x|\geq K}|\Psi_n(x)|dx < \delta,\;\; \text { for all}\;n\geq 1.
	\end{align}
	Further, we know that the Riesz potential defines a linear operator and $v_n(x)\rightarrow 0$ a.e. in $\mathbb R^N$ and hence, 
	\begin{align*}
		\int_{\mathbb R^N}\frac{|v_n(y)|^{p_{\ba,\mu}^*}}{|y|^{\ba}|x-y|^{\mu}}dy\rightarrow 0\;\text{a.e. in}\;\mathbb R^N.
	\end{align*}
	Thus, we have $\Psi_n(x)\rightarrow 0$ a.e. in $\mathbb R^N$. We note that
	\begin{align*}
		\Psi_n(x)
		=& \left(\int_{\mathbb R^N}\frac{\left(|\phi(y)|^{p_{\ba,\mu}^*}-|\phi(x)|^{p_{\ba, \mu}^*}\right)|v_n(y)|^{p_{\ba, \mu}^*}}{|y|^{\ba}|x-y|^{\mu}}dy \right)\frac{|\phi v_n(x)|^{p_{\ba,\mu}^*}}{|x|^{\ba}}
		:=&\left(\int_{\mathbb R^N}\Phi(x, y)|v_n(y)|^{p_{\ba, \mu}^*}dy\right) \frac{|\phi v_n(x)|^{p_{\ba, \mu}^*}}{|x|^{\ba}},
	\end{align*}
	where $\Phi(x, y)=\frac{|\phi(y)|^{p_{\ba,\mu}^*}-|\phi(x)|^{p_{\ba,\mu}^*}}{|y|^{\ba}|x-y|^{\mu}}$. Moreover, {for almost all} $x\in\mb R^N$, there exists some $R>0$ large enough such that
	\begin{align*}
		\int_{\mathbb R^N}\Phi(x, y)|v_n(y)|^{p_{\ba,\mu}^*}dy
		=&\int_{|y|\leq R}\Phi(x, y)|v_n(y)|^{p_{\ba, \mu}^*}dy - |\phi(x)|^{p_{\ba,\mu}^*}\int_{|y|\geq R}\frac{|v_n(y)|^{p_{\ba,\mu}^*}}{|y|^{\ba}|x-y|^{\mu}}dy.
	\end{align*}
	In \cite{pl1}, we noticed that $\Phi(x, y)\in L^{r}(B_R)$ for each $x$, where $r<\frac{N}{\ba+\mu}$ if $\mu>1$, $r\leq \frac{N}{\mu}$ if $0<\mu \leq 1$. So, by Young's inequality, there exists $t>\frac{2N}{\mu}$ such that
	\begin{align*}
		\left(\int_{B_K(0)}\left(\int_{B_R(0)}\Phi(x, y)|v_n(y)|^{p_{\ba,\mu}^*}dy\right)^t dx\right)^{\frac 1t}\leq L_{\phi}\|\Phi(x, y)\|_{r} \||v_n|^{p_{\ba,\mu}^*}\|_{\frac{2N}{2N-2\ba-\mu}}\leq L^{\prime}_{\phi},
	\end{align*}
	where $K$ is same as in $\eqref{n31}$ and $L_\phi^\prime$ is some positive constant that depends on $\phi$. Moreover, one can easily see that for $R>0$ large enough
	\begin{align*}
		\left(\int_{B_K(0)}\left(|\phi(x)|^{p_{\ba, \mu}^*}\int_{|y|\geq R}\frac{|v_n(y)|^{p_{\ba,\mu}^*}}{|y|^{\ba}|x-y|^{\mu}}dy\right)^t dx\right)^{\frac 1t}\leq L,
	\end{align*}
	and so, we have
	\begin{align*}
		\left(\int_{B_K(0)}\left(\int_{\mathbb R^N}\Phi(x, y)|v_n(y)|^{p_{\ba,\mu}^*}dy\right)^t dx\right)^{\frac 1t}\leq L^{\prime\prime}_{\phi},
	\end{align*} where  $L$ and $L_\phi^{\prime\prime}$( depends on $\phi$) are some positive constants.
	Thus for $s=\frac{t\mu-2N}{2(2N+2Nt-t\mu)} >0$ small enough, we obtain
	\begin{align*}
		\int_{B_K(0)}|\Psi_n(x)|^{1+s}dx&\leq\left(\int_{B_K(0)}\left(\int_{\mathbb R^N}\Phi(x, y)|v_n(y)|^{p_{\ba,\mu}^*}dy\right)^t dx\right)^{\frac {1+s}{t}}\left(\int_{B_K(0)}|\phi v_n|^{p^*}dx\right)^{\frac{p_{\ba, \mu}^* (1+s)}{p^*}} \\
		&\quad\quad\left(\int_{B_K(0)} \frac{1}{|x|^{\frac{2Nt\ba(1+s)}{2Nt-2N(1+s)-t(2N-2\ba-\mu)(1+s)}}}\right)^{\frac{2Nt-2N(1+s)-t(2N-2\ba-\mu)(1+s)}{2Nt}}\leq L^{\prime\prime}_{\phi},
	\end{align*}
	since $\frac{2Nt\ba(1+s)}{2Nt-2N(1+s)-t(2N-2\ba-\mu)(1+s)}<N.$	Using this together with $\Psi_n(x)\rightarrow 0$ a.e. in $\mathbb R^N$, we achieve
	\begin{align*}
		\int_{B_K(0)}|\Psi_n(x)|dx \rightarrow 0\;\text{as}\;n\rightarrow\infty.
	\end{align*}
	Combining this with \eqref{n31}, we infer that
	\begin{align*}
		\int_{\mathbb R^N}|\Psi_n(x)|dx \rightarrow 0\;\text{as}\;n\rightarrow\infty.
	\end{align*}
	Now for every $\phi \in C_c^{\infty}(\mathbb R^N)$, by the weighted Hardy-Littlewood-Sobolev inequality \eqref{HLSineq}, we deduce
	\begin{align*}
		\int_{\mathbb R^N}\left(\int_{\mathbb R^N}\frac{|\phi v_n(y)|^{p_{\ba, \mu}^*}}{|y|^{\ba}|x-y|^{\mu}}dy\right) \frac{|\phi v_n(x)|^{p_{\ba, \mu}^*}}{|x|^{\ba}} dx \leq C(N, \ba,\mu)\|\phi v_n\|_{p^*}^{2p_{\ba, \mu}^*}.
	\end{align*}
	Thus, the equation \eqref{n32} is proved.
	From equation \eqref{n32}, we get
	\begin{align*}
		\int_{\mathbb R^N}|\phi(x)|^{{2p_{\ba, \mu}^*}}\left(\int_{\mathbb R^N}\frac{|v_n(y)|^{p_{\ba, \mu}^*}}{|y|^{\ba}|x-y|^{\mu}}dy\right)\frac{|v_n(x)|^{p_{\ba, \mu}^*}}{|x|^{\ba}} dx \leq C(N, \ba,\mu)\|\phi v_n\|_{p^*}^{2p_{\ba, \mu}^*}+o_n(1).
	\end{align*}
	On taking the limit as $n\rightarrow \infty$, we obtain
	\begin{align}\label{n33}
		\int_{\mathbb R^N}|\phi(x)|^{2p_{\ba, \mu}^*}d\tau_3\leq C(N, \ba, \mu)\left(\int_{\mathbb R^N}|\phi|^{p^*}d\tau_2\right)^{\frac{2p_{\ba, \mu}^*}{p^*}}.
	\end{align}
	\noi	Further, let $\phi=\chi_{\{z_j\}}$, $j\in J$ and using this in \eqref{n33}, we have
	\begin{align*}
		\zeta_j^{\frac{p^*}{2p_{\ba, \mu}^*}}\leq \left(C(N, \ba,\mu)\right)^{\frac{p^*}{2p_{\ba, \mu}^*}} \omega_j,\;\text { for all } \;j\in J.
	\end{align*}
	Now the definition of $S_{\ba, \mu}$ (see \eqref{bc1}) yields that
	\begin{align*}
		\left(\int_{\mathbb R^N}\left(\int_{\mathbb R^N}\frac{|\phi v_n(y)|^{p_{\ba, \mu}^*}}{|y|^{\ba}|x-y|^{\mu}}dy\right)\frac{|\phi v_n(x)|^{p_{\ba, \mu}^*}}{|x|^{\ba}}dx\right)^{\frac{p}{2p_{\ba, \mu}^*}}S_{\ba,\mu}\leq \int_{\mathbb R^N}|\na(\phi v_n)|^p dx.
	\end{align*}
	Also, using \eqref{n32} and $v_n\rightarrow 0$ in $L^p_{loc}(\mathbb R^N)$, it follows that
	\begin{align*}
		\left(\int_{\mathbb R^N}|\phi(x)|^{2p_{\ba, \mu}^*}\left(\int_{\mathbb R^N}\frac{|v_n(y)|^{p_{\ba, \mu}^*}}{|y|^{\ba}|x-y|^{\mu}}dy\right) \frac{| v_n(x)|^{p_{\ba, \mu}^*}}{|x|^{\ba}}dx\right)^{\frac{p}{2p_{\ba, \mu}^*}}S_{\ba,\mu}\leq \int_{\mathbb R^N}\phi^p|\na v_n|^p dx + o_n(1).
	\end{align*}
	On passing the limit as $n \rightarrow \infty$ in the above estimation, we achieve
	\begin{align}\label{n34}
		\left(\int_{\mathbb R^N}|\phi(x)|^{2p_{\ba, \mu}^*}d \tau_3\right)^{\frac{p}{2p_{\ba, \mu}^*}}S_{\ba,\mu}\leq \int_{\mathbb R^N}|\phi|^p d\tau_1.
	\end{align}
	Let $\phi=\chi_{\{z_j\}}$, $j\in J$ and applying this in \eqref{n34}, we have
	\begin{align*}
		S_{\ba,\mu}\zeta_j^{\frac{p}{2p_{\ba, \mu}^*}}\leq \nu_j,\;\forall \;j\in J.
	\end{align*}
	This completes the proof of \eqref{l4}.
	
	Now, we prove the possible loss of mass at infinity. For $R>1$, let $\phi_{R}\in C^{\infty}(\mathbb R^N)$ be such that $\phi_{R}=1$ for $|x|> R+1$, $\phi_{R}(x)=0$ for $|x|<R$ and $0\leq \phi_{R}(x) \leq 1$ on $\mathbb R^N$. For every $R>1$, we have
	\begin{align*}
		\limsup_{n\ra \infty} & \int_{\mathbb R^N} \int_{\mathbb R^N}\frac{|u_{n}(y)|^{p^{*}_{\ba, \mu}}|u_{n}(x)|^{p^{*}_{\ba, \mu}}}{|x|^{\ba}|x-y|^{\mu} |y|^{\ba}} dy dx\\
		&= 	\limsup_{n\ra \infty} \left( \int_{\mathbb R^N} \int_{\mathbb R^N}\frac{|u_{n}(y)|^{p^{*}_{\ba, \mu}}|u_{n}(x)|^{p^{*}_{\ba, \mu}}\phi_{R}(x)}{|x|^{\ba}|x-y|^{\mu} |y|^{\ba}} dy dx + 	 \int_{\mathbb R^N} \int_{\mathbb R^N}\frac{|u_{n}(y)|^{p^{*}_{\ba, \mu}}|u_{n}(x)|^{p^{*}_{\ba, \mu}}(1-\phi_{R}(x))}{|x|^{\ba}|x-y|^{\mu} |y|^{\ba}} dy dx  \right)\\
		&= 	\limsup_{n\ra \infty} \int_{\mathbb R^N} \int_{\mathbb R^N}\frac{|u_{n}(y)|^{p^{*}_{\ba, \mu}}|u_{n}(x)|^{p^{*}_{\ba, \mu}}\phi_{R}(x)}{|x|^{\ba}|x-y|^{\mu} |y|^{\ba}} dy dx + 	\int_{\mathbb R^N} (1- \phi_{R}) d\zeta.
	\end{align*}
	Letting $R\ra \infty$, by Lebesgue's  dominated convergent theorem, we deduce
	\[	\limsup_{n\ra \infty} \int_{\mathbb R^N} \int_{\mathbb R^N}\frac{|u_{n}(y)|^{p^{*}_{\ba, \mu}}|u_{n}(x)|^{p^{*}_{\ba, \mu}} }{|x|^{\ba}|x-y|^{\mu} |y|^{\ba}}  dy dx= \zeta_\infty +\int_{\mb R^N} d\zeta. \]
	By the weighted Hardy-Littlewood-Sobolev inequatlity \eqref{HLSineq}, we get
	\begin{align*}
		\zeta_{\infty} &=\lim_{R\ra\infty} \limsup_{n\ra \infty} \int_{\mathbb R^N} \left(\int_{\mathbb R^N}\frac{|u_n(y)|^{p^{*}_{\ba,\mu}}}{|x-y|^{\mu}|y|^{\ba}} dy \right) \frac{|\phi_{R}u_n(x)|^{p^{*}_{\ba,\mu}}}{|x|^{\ba}}  dx \\
		&\leq C(N, \ba,\mu) \lim_{R\ra\infty} \limsup_{n\ra \infty}  \left(\int_{\mathbb R^N} |u_n|^{p^*} dx \int_{\mathbb R^N} |\phi_{R} u_n|^{p^*} dx \right)^{\frac{2N-2\ba-\mu}{2N}}\\
		&=  C(N, \ba,\mu)  \left((\omega_{\infty}+ \int_{\mathbb R^N} d\omega) \omega_{\infty} \right)^{\frac{2N-2\ba-\mu}{2N}}.
	\end{align*}
	This gives 
	\[C(N,\ba, \mu)^{-\frac{2N}{2N-2\ba-\mu}} \zeta_{\infty}^{\frac{2N}{2N-2\ba-\mu}} \leq \omega_{\infty} \left(\int_{\mathbb R^N} d\omega + \omega_{\infty}\right).\]
	Similarly, using the weighted Hardy-Littlewood-Sobolev inequatlity \eqref{HLSineq}, we obtain
	\begin{align*}
		\zeta_{\infty} &=\lim_{R\ra\infty} \limsup_{n\ra \infty} \int_{\mathbb R^N} \left(\int_{\mathbb R^N}\frac{|u_n(y)|^{p^{*}_{\ba,\mu}}}{|x-y|^{\mu}|y|^{\ba}} dy \right) \frac{|\phi_{R}u_n(x)|^{p^{*}_{\ba,\mu}}}{|x|^{\ba}}  dx \\
		&\leq C(N, \ba,\mu) \lim_{R\ra\infty} \limsup_{n\ra \infty}  \left(\int_{\mathbb R^N} |u_n|^{p^*} dx \int_{\mathbb R^N} |\phi_{R} u_n|^{p^*} dx \right)^{\frac{2N-2\ba-\mu}{2N}}\\
		&\leq C(N, \ba,\mu) S^{-p_{\ba,\mu}^{*}} \lim_{R\ra\infty} \limsup_{n\ra \infty}  \left(\int_{\mathbb R^N} |\na u_n|^{p} dx \int_{\mathbb R^N} |\na(\phi_{R} u_n)|^{p} dx \right)^{\frac{p^{*}_{\ba,\mu}}{p}}\\
		&=  C(N, \ba,\mu) S^{-p_{\ba,\mu}^{*}} \left((\nu_{\infty}+ \int_{\mathbb R^N} d\nu)\nu_{\infty} \right)^{\frac{p^{*}_{\ba,\mu}}{p}},
	\end{align*}
	which implies that
	\[S^{p} C(N, \ba,\mu)^{-\frac{p}{p_{\ba,\mu}^{*}}} \zeta_{\infty}^{\frac{p}{p_{\ba,\mu}^{*}}} \leq \nu_{\infty} \left(\int_{\mathbb R^N} d\nu + \nu_{\infty}\right).\]
	This completes the proof.	
\end{proof}

\begin{lemma}\label{lb}
 Assume that  \( 2< q < 2p\) and \eqref{assumption1} hold. Then any $(PS)_c$ sequence  for $\mc I_\la$ is bounded in \(D^{1,p}(\mathbb R^N)\).
\end{lemma}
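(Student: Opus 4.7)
The plan is a standard Ambrosetti--Rabinowitz type argument, but with the Colin--Jeanjean test function $v_n := g(w_n)/g'(w_n)$ in place of $w_n$, and with the coefficient $\theta = 4p^*_{\ba,\mu}$ chosen to cancel the critical Stein--Weiss piece exactly. First I verify that $\|v_n\|\leq 2\|w_n\|$. A direct computation using $(g_{12})$ together with the identity $g'(s)^p(1+2^{p-1}|g(s)|^p)=1$ (which follows from \eqref{g}) yields
\[
\nabla v_n = (2 - g'(w_n)^p)\,\nabla w_n,
\]
and since $g'(w_n)^p\in(0,1]$ by $(g_3)$, the norm bound follows.

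Next I would substitute $v = v_n$ in \eqref{2.5}. The $p$-Laplacian piece produces $a\int|\nabla w_n|^p(2-g'(w_n)^p)$; the $g'(w_n)$ factor in both the subcritical and the critical nonlinearity cancels with the denominator of $v_n$, leaving the clean expressions $\la\int f|g(w_n)|^q$ and the full critical Stein--Weiss double integral. The delicate step is the Kirchhoff piece: the algebraic identity $|g'(w)|^{p-2}g''(w)g(w) = g'(w)^{2p}-g'(w)^p$ (again obtained from $(g_{12})$ and the ODE for $g$) is exactly what makes the two pieces of the inner integral telescope to $\int g'(w_n)^p|\nabla w_n|^p$, so that the whole Kirchhoff contribution reduces to $b\bigl(\int|g'(w_n)|^p|\nabla w_n|^p\bigr)^2$.

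Forming $\mc I_\la(w_n) - \frac{1}{4p^*_{\ba,\mu}}\langle \mc I_\la'(w_n), v_n\rangle$, the critical double integral cancels exactly. The coefficient in front of $\int|\nabla w_n|^p$ is bounded below by $a\bigl(\frac{1}{p} - \frac{1}{2p^*_{\ba,\mu}}\bigr)$, which is strictly positive since the standing assumption $2\ba+\mu<N$ (combined with $p<N$) forces $p^*_{\ba,\mu}>p/2$; the Kirchhoff coefficient $\frac{1}{2p}-\frac{1}{4p^*_{\ba,\mu}}\geq 0$ under the same inequality. Because $q<2p<4p^*_{\ba,\mu}$, the subcritical term survives with a negative sign, but by $(g_6)$ together with H\"older's inequality and the assumption $f\in L^{2p^*/(2p^*-q)}(\mb R^N)$ it is controlled by $C\la\|w_n\|^{q/2}$. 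Coupling these with the $(PS)_c$ bounds $\mc I_\la(w_n)=c+o_n(1)$ and $|\langle \mc I_\la'(w_n),v_n\rangle|\leq \epsilon_n\|v_n\|\leq 2\epsilon_n\|w_n\|$ gives
\[
c + o_n(1)(1+\|w_n\|) \geq C_1 a\|w_n\|^p - C_2\la\|w_n\|^{q/2}.
\]
Since $q/2<p$ by the hypothesis $q<2p$, the $\|w_n\|^p$ term dominates and boundedness of $\{w_n\}$ in $D^{1,p}(\mb R^N)$ follows.

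The main obstacle is the Kirchhoff step above. Without the exact cancellation furnished by $(g_{12})$, the residual term generated by differentiating $|g'(w_n)|^p|\nabla w_n|^p$ inside the nonlocal coefficient does not collapse, and one would be left with a contribution carrying an uncontrolled sign or a power higher than $\|w_n\|^p$, which could not be absorbed. It is precisely the interplay between the ODE for $g$, the choice $v_n = g(w_n)/g'(w_n)$, and the cancellation coefficient $\theta = 4p^*_{\ba,\mu}$ that makes all three competing terms --- the Kirchhoff nonlocal term, the modified Schr\"odinger second-derivative term, and the critical Stein--Weiss term --- harmonize into a single, coercive inequality.
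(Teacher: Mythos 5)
Your proposal is correct and follows essentially the same route as the paper: the same test function $v_n=g(w_n)/g'(w_n)$ with $\|v_n\|\leq 2\|w_n\|$, the same combination $\mc I_\la(w_n)-\frac{1}{4p^*_{\ba,\mu}}\langle \mc I_\la'(w_n),v_n\rangle$ cancelling the critical Stein--Weiss term, and the same control of the subcritical term by $C\la\|w_n\|^{q/2}$ via $(g_6)$, H\"older and Sobolev, concluding from $q/2<p$. The algebraic identities you derive from $(g_{12})$ for the Kirchhoff piece are exactly the simplification the paper performs implicitly in its equation (3.7).
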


\begin{proof}
	Let \(\{w_n\}\) be a $(PS)_c$ sequence in \(D^{1,p}(\mathbb R^N)\). Then
	\begin{align*}
		c+o_n(1)=\mc I_{\la}(w_n)&=\frac{a}{p}\displaystyle\int_{{\mathbb R^N}} |\nabla w_n|^{p}dx+\frac{ b}{2p}\left( \int_{\mathbb R^N}|g^{\prime}(w_n)|^p|\nabla w_n|^pdx\right)^2\\ 
		&\quad-\frac{\la}{q}\int_{\mathbb R^N}f(x)|g(w_n)|^q dx -\frac {1}{4p_{\beta,\mu}^*}\int_{{\mathbb R^N}}\left(\int_{{\mathbb R^N}} \frac{|g(w_{n}(y))|^{2p_{\beta,\mu}^*}}{ |y|^\beta|x-y|^\mu}dy\right)\frac{|g(w_{n}(x))|^{2p_{\beta,\mu}^*}}{|x|^\beta}dx.
	\end{align*}
	For any $v \in D^{1,p}(\mathbb R^N)$, we have
	\begin{align*}
		o_n(1)\|w_n\|=& \langle \mc I^{\prime}_{\la}(w_n), v\rangle\\
		=&  a\int_{{\mathbb R^N}} |\nabla w_n|^{p-2}\nabla w_n\nabla v dx- \la\int_{\mb R^N}  f(x) |g(w_n)|^{q-2} g(w) g^{\prime}(w_n) v dx\\
		& + b\int_{\mathbb R^N}|g^{\prime}(w_n)|^p|\nabla w_n|^p dx\int_{\mb R^N} \left(|g^{\prime}(w_n)|^p|\nabla w_n|^{p-2}\nabla w_n\nabla v dx + |g^{\prime}(w_n)|^{p-2} g^{\prime}(w_n) g^{\prime\prime}(w_n)|\nabla w_n|^p v \right) dx \\
		&\quad-\int_{{\mathbb R^N}} \left(\ds\int_{\mathbb R^N}\frac{|g(w_n)|^{2p_{\beta,\mu}^{*}}}{|x-y|
			^{\mu}|y|^{\beta}}dy\right)\frac{|g(w_n)|^{2p_{\beta,\mu}^{*}-2} g(w_n)}{|x|^{\beta}} g^{\prime}(w_n) v dx.
	\end{align*}
	Choose \(v_n= (1+2^{p-1} |g(w_n)|^{p})^{\frac{1}{p}} g(w_n)=\frac{g(w_n)}{g^{\prime}(w_n)} \in D^{1,p}(\mathbb R^N)\). Then Lemma \ref{L1}-\((g_4)\) and
	\[|\nabla v_n|=\left(1+\frac{2^{p-1} |g(w_n)|^{p}}{1+2^{p-1} |g(w_n)|^{p}}\right) |\nabla w_n|,\]
	yield, \(\|v_n\| \leq 2 \|w_n\|\).
	Also, by \(\eqref{2.5}\), we have
	\begin{align}\label{3.7}
		o_n(1)\|w_n\|&= \langle \mc I^{\prime}_{\la}(w_n), v_n\rangle = a \int_{\mathbb R^N} \left(1+\frac{2^{p-1} |g(w_n)|^{p}}{1+2^{p-1} |g(w_n)|^{p}}\right) |\nabla w_n|^p dx + b \left( \int_{\mathbb R^N} \frac{|\nabla w_n|^p}{1+2^{p-1} |g(w_n)|^{p}} dx \right)^2 \notag\\
		&\quad - \lambda \int_{\mathbb R^N} f(x) |g(w_n)|^{q} dx - \int_{\mathbb R^N}\int_{\mathbb R^N} \frac{|g(w_{n}(y))|^{2p_{\beta,\mu}^*} |g(w_{n}(x))|^{2p_{\beta,\mu}^*}}{ |y|^\beta|x-y|^\mu |x|^\beta} dy dx.
	\end{align}
	Now using \eqref{3.7} together with the H\"{o}lder inequality, Lemma \ref{L1}-$(g_4),(g_6)$ and \eqref{se}, we obtain
	\begin{align*}
		c + o_n(1) \|w_n\| & = \mc I_{\la}(w_n) - \frac{1}{4p^{*}_{\ba, \mu}}\langle \mc I^{\prime}_{\la}(w_n), v_n\rangle\\
		= &\; a \int_{\mathbb R^N} \left[\frac1p -\frac{1}{4p_{\ba,\mu}^{*}} \left(1+\frac{2^{p-1} |g(w_n)|^{p}}{1+2^{p-1} |g(v_n)|^{p}}\right)\right] |\nabla w_n|^p  + \left( \frac{1}{2p}-\frac{1}{4p_{\ba,\mu}^{*}}\right) b \left(\int_{\mathbb R^N} |g^{\prime}(w_n)|^p |\nabla w_n|^p \right)^2 \\
		&\quad \quad + \left(\frac{1}{4p_{\ba,\mu}^{*}} -\frac{1}{q}\right) \la\int_{\mathbb R^N} f(x)|g(w_n)|^{q} dx \\
		\geq &\;  a \left(\frac1p -\frac{1}{2p_{\ba,\mu}^{*}}\right) \int_{\mathbb R^N} |\nabla w_n|^p dx  - \left(\frac{1}{q}- \frac{1}{4p_{\ba,\mu}^{*}} \right) \la \|f\|_{\frac{2p^*}{2p^*-q}} \left(\int_{\mathbb R^N} |g(w_n)|^{2p^*} dx\right)^{\frac{q}{2p^*}} \\
		\geq &\;  a \left(\frac1p -\frac{1}{2p_{\ba,\mu}^{*}}\right) \int_{\mathbb R^N} |\nabla w_n|^p dx  - \la \left(\frac{1}{q}- \frac{1}{4p_{\ba,\mu}^{*}} \right)  2^{\frac{q}{2p}} S^{-\frac{q}{2p}} \|f\|_{\frac{2p^*}{2p^*-q}} \left(\int_{\mathbb R^N} |\na w_n|^{p} dx\right)^{\frac{q}{2p}} \\
		\geq & \left( \frac{1}{p}-\frac{1}{2p_{\ba,\mu}^{*}}\right) a \|w_n\|^{p} - \la\left(\frac{1}{q}-\frac{1}{4p_{\ba,\mu}^{*}} \right)  2^{\frac{q}{2p}} S^{-\frac{q}{2p}} \|f\|_{\frac{2p^*}{2p^*-q}} \|w_n\|^{\frac{q}{2}}.
	\end{align*}
	This implies  \(\{w_n\}\) is bounded, since $p<p_{\ba,\mu}^{*}$ and $2 <q< 2p$. \end{proof}
	\begin{lemma}\label{lbb}
		 Let \(  q = 2p\) and \eqref{assumption1} hold. Then any $(PS)_c$ sequence for $\mc I_\la$ is bounded in \(D^{1,p}(\mathbb R^N)\).
	\end{lemma}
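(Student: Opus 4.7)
The plan is to follow the structure of Lemma \ref{lb} nearly verbatim, replacing $q$ by $2p$ and tracking the single place where the argument genuinely changes. Given a $(PS)_c$ sequence $\{w_n\} \subset D^{1,p}(\mathbb R^N)$, I would again take $v_n = (1 + 2^{p-1}|g(w_n)|^p)^{1/p} g(w_n) = g(w_n)/g'(w_n)$ as test function, for which $\|v_n\| \leq 2\|w_n\|$ as already verified there. Computing $\mc I_\la(w_n) - \frac{1}{4p^*_{\beta,\mu}}\langle \mc I'_\la(w_n), v_n\rangle$ and invoking the identity \eqref{3.7} obtained in the proof of Lemma \ref{lb} cancels the critical Stein-Weiss contribution and leaves a manifestly nonnegative Kirchhoff contribution, while the leading kinetic part provides $a\bigl(\tfrac{1}{p} - \tfrac{1}{2p^*_{\beta,\mu}}\bigr)\|w_n\|^p$.

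The subcritical term is then controlled exactly as in Lemma \ref{lb}: H\"older's inequality with exponents $p^*/p$ and $p^*/(p^*-p)$, the bound $|g(s)| \leq 2^{1/(2p)}|s|^{1/2}$ from Lemma \ref{L1}-$(g_6)$, and the Sobolev embedding \eqref{se} together yield
\[
\la\int_{\mathbb R^N} f(x)|g(w_n)|^{2p}\,dx \;\leq\; 2\la\, S^{-1}\|f\|_{\frac{p^*}{p^*-p}}\|w_n\|^{p}.
\]
Here lies the one material difference from Lemma \ref{lb}: with $q = 2p$ the exponent $q/2$ matches $p$ exactly, so the subcritical contribution is of the same order as the leading kinetic term and no longer absorbs automatically. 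Combining the two bounds and using the algebraic identity $2\bigl(\tfrac{1}{2p} - \tfrac{1}{4p^*_{\beta,\mu}}\bigr) = \tfrac{1}{p} - \tfrac{1}{2p^*_{\beta,\mu}}$ produces
\[
c + o_n(1)\|w_n\| \;\geq\; \left(\frac{1}{p} - \frac{1}{2p^*_{\beta,\mu}}\right)\bigl(a - \la S^{-1}\|f\|_{\frac{p^*}{p^*-p}}\bigr)\|w_n\|^{p}.
\]

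The only obstacle is therefore ensuring the coefficient on the right-hand side is strictly positive. The factor $\tfrac{1}{p} - \tfrac{1}{2p^*_{\beta,\mu}}$ is positive because $p < 2p^*_{\beta,\mu}$ (which follows from $2\beta + \mu < N$ inside \eqref{assumption1}), so the whole prefactor is positive precisely when $\la < aS\|f\|^{-1}_{\frac{p^*}{p^*-p}}$. This is exactly the range of $\la$ imposed in Theorem \ref{sym-infinite-sol}, and I would carry it as a standing hypothesis here. Once positivity is secured, boundedness of $\{w_n\}$ follows at once by the usual contradiction: if $\|w_n\| \to \infty$, dividing the inequality by $\|w_n\|^p$ would yield $0 \geq$ a strictly positive constant.
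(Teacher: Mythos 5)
Your proposal is correct and follows essentially the same route as the paper: the same test function $v_n = g(w_n)/g'(w_n)$, the same combination $\mc I_\la(w_n) - \tfrac{1}{4p^*_{\beta,\mu}}\langle \mc I'_\la(w_n), v_n\rangle$, and the same H\"older--$(g_6)$--Sobolev estimate, arriving at the identical lower bound $\bigl(\tfrac1p - \tfrac{1}{2p^*_{\beta,\mu}}\bigr)\bigl(a - \la S^{-1}\|f\|_{\frac{p^*}{p^*-p}}\bigr)\|w_n\|^p$. You are also right that the restriction $\la < aS\|f\|^{-1}_{\frac{p^*}{p^*-p}}$ is indispensable even though the lemma statement omits it; the paper's own proof imposes exactly this condition in its final line.
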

\begin{proof}
Let  $\{w_n\}$ be a $(PS)_c$ sequence for for $\mc I_\la$ any $c\in \mb R^N.$	Using the similar calculation as in Lemma \ref{lb}, we get
	\begin{align*} 
		c + o(1) \|w_n\| & = \mc I_{\la}(w_n) - \frac{1}{4p^{*}_{\ba, \mu}}\langle \mc I^{\prime}_{\la}(w_n), v_n\rangle\\
		&\geq  \left( \frac{1}{p}-\frac{1}{2p_{\ba,\mu}^{*}}\right)\|w_n\|^{p}\left( a  - \frac \la 2  2 S^{-1} \|f\|_{\frac{p^*}{p^*-p}}\right).
	\end{align*} For all $0<\la<\frac{a}{ S^{-1} \|f\|_{\frac{p^*}{p^*-p}} },$ we get $\{w_n\}$ is a bounded sequence.
\end{proof}
\begin{lemma}\label{lbbb}
 Assume that \(  2p<q <2p^*_{\beta,\mu}\) and \eqref{assumption1} hold. Then any $(PS)_c$ sequence for $\mc I_\la$ is bounded in \(D^{1,p}(\mathbb R^N)\).
\end{lemma}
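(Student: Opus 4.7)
The strategy mirrors Lemmas \ref{lb} and \ref{lbb} but exploits the fact that $q > 2p$ by choosing a different scaling parameter in the standard subtraction trick. I would fix a $(PS)_c$ sequence $\{w_n\} \subset D^{1,p}(\mathbb R^N)$ and take, exactly as in those earlier lemmas, the test function $v_n = g(w_n)/g'(w_n) = (1 + 2^{p-1}|g(w_n)|^p)^{1/p} g(w_n)$, which lies in $D^{1,p}(\mathbb R^N)$ with $\|v_n\| \leq 2\|w_n\|$ by Lemma \ref{L1}. This gives $\langle \mc I'_\la(w_n), v_n\rangle = o_n(1)\|w_n\|$, and the identity \eqref{3.7} provides an explicit expansion of this quantity.

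The key step is to form the combination $\mc I_\la(w_n) - \frac{1}{q}\langle \mc I'_\la(w_n), v_n\rangle$. With the choice $\theta = q$ the subcritical $f$-term acquires coefficient $\la(\frac{1}{q} - \frac{1}{q}) = 0$ and drops out entirely---this is the crucial departure from Lemma \ref{lb} and is exactly what allows the argument to go through without any smallness condition on $\la$. The Kirchhoff term is then left with coefficient $b(\frac{1}{2p} - \frac{1}{q}) \geq 0$ (since $q > 2p$), and the nonnegative Stein--Weiss critical double integral has coefficient $\frac{1}{q} - \frac{1}{4p^{*}_{\beta,\mu}} \geq 0$ (since $q < 2p^{*}_{\beta,\mu} < 4p^{*}_{\beta,\mu}$). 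Finally, the gradient contribution can be bounded below as
\begin{equation*}
a\int_{\mathbb R^N}\left[\frac{1}{p} - \frac{1}{q}\left(1 + \frac{2^{p-1}|g(w_n)|^p}{1 + 2^{p-1}|g(w_n)|^p}\right)\right]|\nabla w_n|^p\, dx \ \geq \ a\left(\frac{1}{p} - \frac{2}{q}\right)\|w_n\|^p,
\end{equation*}
because the bracketed factor is pointwise at least $\frac{1}{p} - \frac{2}{q} > 0$ by the hypothesis $q > 2p$.

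Combining these pieces yields $c + o_n(1)\|w_n\| \geq a(\frac{1}{p} - \frac{2}{q})\|w_n\|^p$, and since $p > 1$ this immediately forces $\{w_n\}$ to be bounded in $D^{1,p}(\mathbb R^N)$. The only delicate point is the book-keeping that verifies the single scaling parameter $\theta = q$ makes all three coefficients (gradient, Kirchhoff, and critical) simultaneously nonnegative, which rests on the placement $2p < q < 4p^{*}_{\beta,\mu}$; once this is in hand, no H\"older or Sobolev absorption on the $f$-term is required, in sharp contrast to the proofs of the two previous lemmas.
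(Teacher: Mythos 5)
Your proposal is correct and follows essentially the same route as the paper: the same test function $v_n=g(w_n)/g'(w_n)$, the same combination $\mc I_\la(w_n)-\tfrac1q\langle \mc I'_\la(w_n),v_n\rangle$, the cancellation of the $f$-term, the nonnegativity of the Kirchhoff and Stein--Weiss coefficients, and the lower bound $a\bigl(\tfrac1p-\tfrac2q\bigr)\|w_n\|^p$. No substantive differences.
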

\begin{proof}Let  $\{w_n\}$ be a $(PS)_c$ sequence for $\mc I_\la$ for any $c\in \mb R^N.$	Gathering \eqref{3.7} in combination  with the H\"{o}lder inequality, Lemma \ref{L1}-$(g_4),(g_6)$ and \eqref{se}, it follows that
	\begin{align*}
		c + o_n(1) \|w_n\| & = \mc I_{\la}(w_n) - \frac{1}{q}\langle \mc I^{\prime}_{\la}(w_n), v_n\rangle\\
		= &\; a \int_{\mathbb R^N} \left[\frac1p -\frac{1}{q} \left(1+\frac{2^{p-1} |g(w_n)|^{p}}{1+2^{p-1} |g(v_n)|^{p}}\right)\right] |\nabla w_n|^p  + \left( \frac{1}{2p}-\frac{1}{q}\right) b \left(\int_{\mathbb R^N} |g^{\prime}(w_n)|^p |\nabla w_n|^p \right)^2 \\
		&\quad \quad + \left(\frac 1q-\frac {1}{4p_{\beta,\mu}^*}\right)\int_{{\mathbb R^N}}\left(\int_{{\mathbb R^N}} \frac{|g(w(y))|^{2p_{\beta,\mu}^*}}{ |y|^\beta|x-y|^\mu}dy\right)\frac{|g(w(x))|^{2p_{\beta,\mu}^*}}{|x|^\beta}dx\\
		\geq &\;  a \left(\frac1p -\frac{2}{q}\right) \int_{\mathbb R^N} |\nabla w_n|^p dx  
		= \left( \frac{1}{p}-\frac{2}{q}\right) a \|w_n\|^{p},
	\end{align*}where in the last line,  we used the fact that $2p<q<2p^*<4p_{\beta,\mu}^*$.
	Therefore, from the above estimation, it implies that  \(\{w_n\}\) is bounded.
	This completes the proof of the Lemma. 
\end{proof}
\begin{lemma}\label{sk}
Let  $\beta\geq 0,$ $\mu>0,$ $0<2\beta+\mu<N$ and  $2\leq p<N$. Suppose $\{w_n\}$ is a bounded sequence in $L^{p^*}(\mb R^N)$ such that $w_n\to w$ a.e. in $\mb R^N$. Then we have
\begin{align}\label{bll}
&	\int_{{\mathbb R^N}}\left(\int_{{\mathbb R^N}} \frac{|w_n|^{p_{\beta,\mu}^*}}{ |y|^\beta|x-y|^\mu}dy\right)\frac{|w_n|^{p_{\beta,\mu}^*}}{|x|^\beta}dx
	-\int_{{\mathbb R^N}}\left(\int_{{\mathbb R^N}} \frac{|w_n-w|^{p_{\beta,\mu}^*}}{ |y|^\beta|x-y|^\mu}dy\right)\frac{|w_n-w|^{p_{\beta,\mu}^*}}{|x|^\beta}dx\notag\\
	&\qquad\qquad\qquad\qquad\qquad\qquad\qquad\to \int_{{\mathbb R^N}}\left(\int_{{\mathbb R^N}} \frac{|w|^{p_{\beta,\mu}^*}}{ |y|^\beta|x-y|^\mu}dy\right)\frac{|w|^{p_{\beta,\mu}^*}}{|x|^\beta}dx
\end{align} as $n\to\infty$.
\end{lemma}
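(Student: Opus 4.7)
The plan is to reduce the claim to a Brezis--Lieb statement in a single $L^s$-space plus continuity of the Stein--Weiss bilinear form. Set $s=\frac{2N}{2N-2\beta-\mu}$, chosen so that $s\cdot p_{\beta,\mu}^{*}=p^{*}$. Then the hypothesis $\{w_n\}$ bounded in $L^{p^*}(\mathbb R^N)$ translates into boundedness of $|w_n|^{p_{\beta,\mu}^{*}}$ in $L^{s}(\mathbb R^N)$, and the weighted Hardy--Littlewood--Sobolev inequality \eqref{HLSineq} (with $t=s$, $\vartheta=\beta$, exponent parameters satisfying $\frac{1}{s}+\frac{\mu+2\beta}{N}+\frac{1}{s}=2$) yields a symmetric continuous bilinear form
\[
B(F,G):=\int_{\mathbb R^N}\int_{\mathbb R^N}\frac{F(x)G(y)}{|x|^{\beta}|x-y|^{\mu}|y|^{\beta}}\,dx\,dy,\qquad |B(F,G)|\leq C(N,\beta,\mu)\|F\|_{s}\|G\|_{s}.
\]
Symmetry $B(F,G)=B(G,F)$ is immediate from swapping $x\leftrightarrow y$ in the integrand.

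First, I would apply the classical Brezis--Lieb lemma with exponent $r=p_{\beta,\mu}^{*}\geq 1$ to the sequence $\{w_n\}$ (bounded in $L^{p^*}$, converging a.e.\ to $w$): this gives
\[
A_n-B_n\to C \quad \text{strongly in } L^{s}(\mathbb R^N),
\]
where $A_n:=|w_n|^{p_{\beta,\mu}^{*}}$, $B_n:=|w_n-w|^{p_{\beta,\mu}^{*}}$, $C:=|w|^{p_{\beta,\mu}^{*}}$. Next, since $w_n-w\to 0$ a.e.\ and $\{B_n\}$ is bounded in the reflexive space $L^{s}(\mathbb R^N)$ (with $1<s<\infty$), the standard fact that bounded a.e.\ null sequences converge weakly to zero yields $B_n\rightharpoonup 0$ in $L^{s}(\mathbb R^N)$.

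With these two ingredients the conclusion follows by expanding and using bilinearity/symmetry of $B$:
\[
B(A_n,A_n)-B(B_n,B_n)=B(A_n-B_n,A_n-B_n)+2B(B_n,A_n-B_n).
\]
The first term converges to $B(C,C)$ by continuity of $B$ on $L^{s}\times L^{s}$. For the second, write $B(B_n,A_n-B_n)=B(B_n,C)+B(B_n,(A_n-B_n)-C)$; the second summand is $o(1)$ because $\|B_n\|_s$ is bounded and $\|(A_n-B_n)-C\|_s\to 0$, while $B(B_n,C)\to 0$ because $F\mapsto B(F,C)$ is a bounded linear functional on $L^{s}(\mathbb R^N)$ and $B_n\rightharpoonup 0$.

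The only real work is the Brezis--Lieb step $A_n-B_n\to C$ in $L^{s}$, which I expect to be the main technical obstacle: it requires the elementary inequality $\bigl||a+b|^{r}-|a|^{r}-|b|^{r}\bigr|\leq \varepsilon|a|^{r}+C_\varepsilon|b|^{r}$ for $r=p_{\beta,\mu}^{*}\geq 1$, applied to $a=w_n-w$, $b=w$, followed by a dominated convergence argument using a.e.\ convergence and the uniform $L^{p^*}$-bound (this gives convergence with respect to the $L^s$-norm because $sp_{\beta,\mu}^{*}=p^*$). Once this is in hand, the rest is a clean application of bilinearity and of the weak/strong continuity of $B$.
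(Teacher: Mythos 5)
Your argument is correct: the identification $s=\tfrac{2N}{2N-2\beta-\mu}$ with $s\,p_{\beta,\mu}^{*}=p^{*}$, the strong $L^{s}$ Brezis--Lieb convergence $|w_n|^{p_{\beta,\mu}^{*}}-|w_n-w|^{p_{\beta,\mu}^{*}}\to|w|^{p_{\beta,\mu}^{*}}$ via the inequality $\bigl||a+b|^{r}-|a|^{r}\bigr|\leq\varepsilon|a|^{r}+C_\varepsilon|b|^{r}$, and the expansion of the Stein--Weiss bilinear form using its $L^{s}\times L^{s}$ continuity all check out (in particular $\beta<N/s'$ reduces to $\mu>0$, so Proposition \ref{P1} applies). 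The paper itself only cites \cite{my1} for this lemma, and your proof is essentially the same standard nonlocal Brezis--Lieb argument carried out in that reference.
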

\begin{proof}
	The proof follows in a  similar manner as in \cite{my1}.
\end{proof}

\begin{lemma}\label{cc}
	 Assume that  \( 2\leq q < 2p\) and \eqref{assumption1} hold. Let  \(\{w_n\}\subset D^{1,p}(\mathbb R^N)\) be a Palais-Smale sequence for \(\mc I_{\la}\) and $c<0$,  then there exists $\la^*>0$ such that $\mc I_{\la}$ satisfies the $(PS)_c$ condition for all $\la\in (0,\la^*)$.
\end{lemma}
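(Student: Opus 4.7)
The plan is to apply the concentration--compactness Lemma \ref{c1} to a $(PS)_c$ sequence $\{w_n\}$ with $c<0$, rule out every concentration---at each finite point $z_j$ and at infinity---when $\la$ is sufficiently small, and then conclude $w_n\to w$ strongly through Lemma \ref{sk} combined with the $(S^+)$ property of $-\Delta_p$ on $D^{1,p}(\mathbb R^N)$.

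By Lemma \ref{lb}, $\{w_n\}$ is bounded: solving the coercive inequality $c\ge A\|w_n\|^p-B\la\|w_n\|^{q/2}$ (possible since $q/2<p$) with Young's inequality yields a bound $\|w_n\|\le K$ that stays uniform as $\la\downarrow 0$. Up to a subsequence, $w_n\rightharpoonup w$ weakly in $D^{1,p}(\mathbb R^N)$, strongly in $L^s_{\loc}(\mathbb R^N)$ for every $s<p^*$, and pointwise a.e., so $g(w_n)\to g(w)$ a.e. by continuity of $g$. Lemma \ref{c1} supplies the measures $\nu,\omega,\zeta$ with point masses $\{(\nu_j,\omega_j,\zeta_j)\}_{j\in J}$ at $\{z_j\}$ and loss-at-infinity $(\nu_\infty,\omega_\infty,\zeta_\infty)$, together with $S_{\beta,\mu}\zeta_j^{p/(2p^*_{\beta,\mu})}\le\nu_j$ and the analogous bound at infinity.

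To localize the Palais--Smale identity at a candidate concentration point $z_j$, pick $\phi_\epsilon\in C_c^\infty(\mathbb R^N)$ with $\phi_\epsilon\equiv 1$ on $B_{\epsilon/2}(z_j)$, $\supp\phi_\epsilon\subset B_\epsilon(z_j)$, $|\nabla\phi_\epsilon|\le C/\epsilon$, and test with $\phi_\epsilon v_n$ where $v_n=g(w_n)/g^\prime(w_n)$ (admissible since $\|v_n\|\le 2\|w_n\|$ by Lemma \ref{L1}-$(g_4)$). Sending $n\to\infty$ and then $\epsilon\to 0^+$: the subcritical $\la$-term vanishes by Rellich compactness on $B_\epsilon$ together with $(g_5)$; the cross terms carrying $\nabla\phi_\epsilon$ vanish by H\"older because $v_n\to v$ in $L^p_{\loc}$ and $\int|\nabla\phi_\epsilon|^p=O(\epsilon^{N-p})$; the $g^{\prime\prime}$-corrections inside the Kirchhoff piece are bounded by $(g_{12})$ and die in the same double limit; the principal $a$-term contributes at least $a\nu_j$ in the limit (since $\nabla v_n=h(w_n)\nabla w_n$ with $h\ge 1$ and $|\nabla w_n|^p\rightharpoonup\nu$); and the critical nonlocal term is majorized by $\zeta_j$. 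This yields $a\nu_j\le\zeta_j$, which, combined with \eqref{l4}, gives the dichotomy
\[\zeta_j=0\quad\text{or}\quad \zeta_j\ge(aS_{\beta,\mu})^{\frac{2p^*_{\beta,\mu}}{2p^*_{\beta,\mu}-p}}=:M_0.\]
A parallel argument with cutoffs $\psi_R(x)=\psi_1(x/R)$ (with $\psi_1\equiv 0$ on $B_1$, $\psi_1\equiv 1$ outside $B_2$) and the inequality $S^pC(N,\beta,\mu)^{-p/p^*_{\beta,\mu}}\zeta_\infty^{p/p^*_{\beta,\mu}}\le\nu_\infty(\int d\nu+\nu_\infty)$ from Lemma \ref{c1} produces the same dichotomy for $\zeta_\infty$.

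To eliminate the bad alternative, rewrite $c=\lim\mathcal I_\la(w_n)$ through the identity $\mathcal I_\la(w_n)-\frac{1}{4p^*_{\beta,\mu}}\langle\mathcal I^\prime_\la(w_n),v_n\rangle$ used in Lemma \ref{lb}; each concentrated mass contributes at least $M_0/(4p^*_{\beta,\mu})$ to the liminf, so whenever some $\zeta_j\ge M_0$ or $\zeta_\infty\ge M_0$,
\[c\ge\frac{M_0}{4p^*_{\beta,\mu}}-\la\Bigl(\frac{1}{q}-\frac{1}{4p^*_{\beta,\mu}}\Bigr)2^{q/(2p)}S^{-q/(2p)}\|f\|_{\frac{2p^*}{2p^*-q}}K^{q/2}.\]
Choosing $\la^*>0$ so that the right-hand side stays strictly positive on $(0,\la^*)$ contradicts $c<0$; hence $\zeta_j=0$ for every $j\in J$ and $\zeta_\infty=0$. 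The inequality $a\nu_j\le\zeta_j$ then forces $\nu_j=\omega_j=0$ and $\nu_\infty=\omega_\infty=0$, so $w_n\to w$ in $L^{p^*}(\mathbb R^N)$; Lemma \ref{sk} (applied, after the natural reduction, to $|g(w_n)|^2$) then delivers the strong convergence of the critical nonlocal functional. Passing to the limit in $\langle\mathcal I^\prime_\la(w_n)-\mathcal I^\prime_\la(w),w_n-w\rangle\to 0$ and invoking the $(S^+)$ property of $-\Delta_p$ on $D^{1,p}(\mathbb R^N)$ (with the Kirchhoff coefficient $a+b\int|\nabla w_n|^p$ bounded below by $a>0$) produces $w_n\to w$ strongly. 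The main obstacle is the localization step: the change of variables forces the test function to carry $g,g^\prime,g^{\prime\prime}$ simultaneously and couples them with the global Kirchhoff factor, so one must invoke the whole catalogue of Lemma \ref{L1}---chiefly $(g_3),(g_4),(g_9)$--$(g_{12})$---to control all the $g$-corrections and recover the clean Lions-type estimate $a\nu_j\le\zeta_j$.
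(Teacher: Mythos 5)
Your strategy is essentially the paper's: boundedness via Lemma \ref{lb}, the concentration--compactness Lemma \ref{c1}, localization with the test functions $\psi_{\e,j}v_n$ and $\psi_R v_n$ where $v_n=g(w_n)/g^{\prime}(w_n)$, a dichotomy for the atoms, exclusion of nonzero atoms at negative energy for small $\la$, and then strong convergence. Two points in your write-up diverge from the paper and need patching. First, the measure bookkeeping: the Stein--Weiss measure $\zeta$ is necessarily attached to the sequence $g^2(w_n)$, since the critical density is $|g(w_n)|^{2p^{*}_{\ba,\mu}}=|g^2(w_n)|^{p^{*}_{\ba,\mu}}$, so the inequality \eqref{l4} of Lemma \ref{c1} relates $\zeta_j$ to the atom of $|\nabla g^2(w_n)|^p$, not to your $\nu_j$ coming from $|\nabla w_n|^p$. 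The paper resolves this by lower-bounding the localized $a$-term by $a\int_{\mb R^N}|\nabla g^{2}(w_n)|^p\psi_{\e,j}\,dx$ via \eqref{gg} (i.e.\ Lemma \ref{L1}-$(g_{11})$), so the same gradient measure appears on both sides; your $a\nu_j\le\zeta_j$ can be reconciled with \eqref{l4} only after inserting the factor $2$ from $|\nabla g^2(w_n)|^p\le 2|\nabla w_n|^p$, which changes your constant $M_0$ but not the dichotomy. Second, your exclusion step bounds the perturbation by $\la C K^{q/2}$ with $K$ the a priori bound on $\|w_n\|$; since $K$ depends on the level $c$, your $\la^*$ depends on $c$, whereas the paper's Young-inequality absorption \eqref{e1} splits $\la\int_{\mb R^N} f|g(w_n)|^q\,dx$ into a piece swallowed by the positive gradient term and a remainder of order $\la^{\frac{2p}{2p-q}}\|f\|_{\frac{2p^*}{2p^*-q}}^{\frac{2p}{2p-q}}$ independent of the sequence, yielding a $\la^*$ uniform over all $c<0$ --- the form actually needed for the truncation argument in Theorem \ref{main.result.1}. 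Finally, the paper closes with a direct Brezis--Lieb computation on $\langle\mc I^{\prime}_{\la}(w_n),v_n\rangle$ giving $a\|w_n-w\|^p\le o_n(1)$ rather than invoking the $(S^+)$ property; if you take the $(S^+)$ route you still owe an argument that the $b$-term and the $g^{\prime\prime}$-correction in $\langle\mc I^{\prime}_{\la}(w_n)-\mc I^{\prime}_{\la}(w),w_n-w\rangle$ are negligible, which is not automatic because of the weights $|g^{\prime}(w_n)|^p$.
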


\begin{proof}
	Let $\{w_n\}\subset D^{1,p}(\mathbb R^N)$ be a $(PS)_c$-sequence for $\mc I_\la$. Then by Lemma \ref{lb}, $\{w_n\}$ is a bounded in $D^{1,p}(\mathbb R^N)$. So, by Lemma \ref{L1}-$(g_5)$, $\{g(w_n)\}$ is also bounded in $D^{1,p}(\mathbb R^N)$. Therefore, we can assume that \(w_n \rightharpoonup w\) weakly in $D^{1,p}(\mathbb R^N)$, $w_n \ra w$ a.e in $\mathbb R^N$. Since, $g\in C^{\infty}$, then $|g^2(w_n)|^{p}\ra |g^2(w)|^{p}$ a.e in $\mathbb R^N$ and $|g^2(w_n)|^{p}\rightharpoonup |g^2(w)|^{p}$ weakly in $D^{1,p}(\mathbb R^N)$. Hence, we can assume that
	\[|\nabla g^{2}(w_n)|^p \rightharpoonup \omega,\quad |g^2(w_n)|^{p^{*}} \rightarrow \nu,\quad \left(\int_{\mathbb R^N}\frac{|g^2(w(y))|^{p^{*}_{\ba, \mu}}}{|x-y|^{\mu}|x|^{\ba}}\right)\frac{|g(w(x))|^{2p^{*}_{\ba, \mu}}}{|x|^{\ba}}\rightharpoonup \zeta \]
	in the sense of measure. By Lemma \ref{c1}, there exists at most countable set $J$, sequence of points $\{x_j\}_{j\in J} \subset \mathbb R^N$
	and families of positive numbers $\{\nu_j : j\in J\}$, $\{\zeta_j: j\in J\}$ and $\{\omega_j: j\in I\}$ such that
	\begin{align}
		\zeta & = \left(\int_{\mathbb R^N}\frac{|g(w(y))|^{2p^{*}_{\ba, \mu}}}{|x-y|^{\mu}|x|^{\ba}}\right)\frac{|g(w(x))|^{2p^{*}_{\ba, \mu}}}{|x|^{\ba}} +\sum_{j\in J} \zeta_{j} \delta_{x_j},\quad \sum_{j\in J} \zeta_j^{\frac{1}{p^{*}_{\ba, \mu}}} < \infty,\label{l111}\\
		\omega &\geq |\nabla g^2(w)|^p + \sum_{j\in J} \omega_{j} \delta_{x_j}\label{l21}\\
		\nu &\geq  |g(w)|^{2p^{*}} + \sum_{j\in J} \nu_{j} \delta_{x_j}\label{l31}
	\end{align}
	and
	\begin{align}
		S_{\ba, \mu} \zeta_{j}^{\frac{p}{2 p^{*}_{\ba, \mu}}}\leq \omega_j, \;\mbox{and} \; \zeta_{j}^{\frac{N}{2N-2\ba-\mu}} \leq C(N,\ba,\mu)^{\frac{N}{2N-2\ba-\mu}} \nu_j,\label{l41}
	\end{align}
	where \(\delta_x\) is the Dirac-mass of mass \(1\) concentrated at \(x\in\mathbb R^N\).
	
	\noi	Moreover, we can construct a smooth cut-off function \(\psi_{\e,j}\) centered at \(x_j\) such that
	\[0\leq \psi_{\e,j}(x)\leq 1, \psi_{\e,j}(x)=1\;\mbox{in}\; B\left(x_j, \frac{\e}{2}\right),  \psi_{\e,j}(x)=0\;\mbox{in}\; \mathbb R^N\setminus B (x_j,\e), |\nabla \psi_{\e,j}|\leq \frac{4}{\e},\]
	for any \(\e>0\) small.
	
	\noi Let  us set \[v_n:= (1+{2^{p-1} |g(w_n)|^{p}})^{\frac{1}{p}} g(w_n).\] Then \(\{v_n\}\) is bounded in \(D^{1,p}(\mathbb R^N)\). Obviously, \(\langle \mc I_{\la}(w_n), v_n \psi_{\e,j}\rangle \ra 0\) as $n\ra \infty$.
	So, we have
	\begin{align}\label{c0}
		&-\lim_{\e\rightarrow 0} \lim_{n\ra \infty}\left[ a \int_{\mathbb R^N} \frac{g(w_n)}{g^{\prime}(w_n)} |\na w_n|^{p-2}\nabla w_n \nabla \psi_{\e,j} dx + b \left(\int_{\mathbb R^N} \frac{|\nabla w_n|^p}{1+2^{p-1} |g(w_n)|^{p}}\right) \int_{\mathbb R^N}\frac{g(w_n) |\na w_n|^{p-2}\nabla w_n \nabla \psi_{\e,j}}{ (1+{2^{p-1} |g(w_n)|^{p}})^{\frac{1}{p}}} dx\right]\notag\\
		&=\lim_{\e\rightarrow 0} \lim_{n\ra \infty}\left[ a \int_{\mathbb R^N} \left(1+\frac{2^{p-1} |g(w_n)|^{p}}{1+2^{p-1} |g(w_n)|^{p}}\right) |\nabla w_n|^p  \psi_{\e,j} dx + b \left( \int_{\mathbb R^N} \frac{|\nabla w_n|^p}{1+2^{p-1} |g(w_n)|^{p}}\right) \left(\int_{\mathbb R^N}\frac{|\nabla w_n|^p \psi_{\e,j}}{1+{2^{p-1} |g(w_n)|^{p}}} dx\right)\right.\notag\\
		&\quad -\la\left. \int_{\mathbb R^N} f(x)|g(w_n)|^{q} \psi_{\e,j} dx - \int_{\mathbb R^N} \int_{{\mathbb R^N}} \frac{|g(w_{n}(y))|^{2p_{\beta,\mu}^*}|g(w_{n}(x))|^{2p_{\beta,\mu}^*}\psi_{\e,j}(x) }{|y|^\beta|x-y|^\mu|x|^{\ba}}dydx \right].
	\end{align}
	Now the H\"{o}lder inequality and Lemma \ref{L1}-$(g_4)$  yield that
	\begin{align}\label{c3}
		0\leq &\lim_{\e\rightarrow 0} \lim_{n\ra \infty}\left| a \int_{\mathbb R^N} (1+{2^{p-1} |g(w_n)|^{p}})^{\frac{1}{p}} g(w_n) |\nabla w_n|^{p-2}\nabla w_n \nabla \psi_{\e,j} dx\right|\notag\\
		\leq & K \lim_{\e\rightarrow 0} \lim_{n\ra \infty} \int_{\mathbb R^N} |w_n |\nabla w_n|^{p-2}\nabla w_n \nabla \psi_{\e,j}| dx\notag\\
		\leq & K \lim_{\e\rightarrow 0} \lim_{n\ra \infty}\left[\left( \int_{\mathbb R^N} |\na w_n |^p dx\right)^{\frac{p-1}{p}} \left(\int_{\mathbb R^N} |w_n\nabla  \psi_{\e,j}|^{p} dx\right)^{\frac{1}{p}} \right] \notag\\
		\leq & K \lim_{\e\rightarrow 0} \left( \int_{B(x_j, 2\e)} |\na \psi_{\e, j}|^{N} dx\right)^{\frac{p}{N}} \left( \int_{B(x_j, 2\e)} |w|^{\frac{Np}{N-p}} dx\right)^{\frac{N-p}{Np}}\notag\\
		\leq & K \lim_{\e\rightarrow 0} \left( \int_{B(x_j, 2\e)} |w|^{p^*} dx\right)^{\frac{1}{p^*}} =0,
	\end{align}
	Similarly, using the boundedness of \(\{w_n\}\) and the definition of \(\psi_{\e, j}\), we have
	\begin{align}\label{c2}
		\lim_{\e\rightarrow 0} \lim_{n\ra \infty}\left[b \left(\int_{\mathbb R^N} \frac{|\nabla w_n|^p}{1+2^{p-1} |g(w_n)|^{p}} dx\right) \left(\int_{\mathbb R^N}\frac{g(w_n) |\nabla w_n|^{p-2} \nabla w_n \nabla \psi_{\e,j}}{ (1+{2^{p-1} |g(w_n)|^{p}})^{\frac1p}} dx\right)  \right] =0 .
	\end{align}
	One can easily check that, 
	\begin{align}\label{c4}
		\lim_{\e\rightarrow 0} \lim_{n\rightarrow \infty}\int_{\mathbb R^N} f(x) |g(w_n)|^{q} \psi_{\e,j} dx =0.
	\end{align}	 Now by Lemma \ref{L1}-$(g_{11})$, we have
\begin{align}\label{gg}
	|\na g^2(w_n)|^p =|2g(w_n)g'(w_n)\nabla w_n|^p \leq 2 |\na w_n|^p.
\end{align}
	\noi	Plugging the relation together with \eqref{c3}, \eqref{c2} and \eqref{c4} in \eqref{c0},  we deduce
	\begin{align*}
		0 &=\lim_{\e\rightarrow 0} \lim_{n\ra \infty}\left[ a \int_{\mathbb R^N} \left(1+\frac{2^{p-1} |g(w_n)|^{p}}{1+2^{p-1} |g(w_n)|^{p}}\right) |\nabla w_n|^p  \psi_{\e,j} dx + b \left( \int_{\mathbb R^N} \frac{|\nabla w_n|^p}{1+2^{p-1} |g(w_n)|^{p}}\right) \left(\int_{\mathbb R^N}\frac{|\nabla w_n|^p \psi_{\e,j}}{1+{2^{p-1} |g(w_n)|^{p}}} dx\right)\right.\notag\\
		&\quad -\la\left. \int_{\mathbb R^N} f(x)|g(w_n)|^{q} \psi_{\e,j} dx - \int_{\mathbb R^N} \int_{{\mathbb R^N}} \frac{|g(w_{n}(y))|^{2p_{\beta,\mu}^*}|g(w_{n}(x))|^{2p_{\beta,\mu}^*}\psi_{\e,j}(x) }{|y|^\beta|x-y|^\mu|x|^{\ba}}dydx \right]\\
		&\geq \lim_{\e\ra 0}\lim_{n\ra\infty} \left\{a\int_{\mathbb R^N} |\na g^2(w_n)|^{p} \psi_{\e,j} dx - \int_{\mathbb R^N} \int_{\mathbb R^N} \frac{|g(w_{n}(y))|^{2p_{\beta,\mu}^*}|g(w_{n}(x))|^{2p_{\beta,\mu}^*}\psi_{\e,j}}{|y|^\beta|x-y|^\mu|x|^{\ba}} dxdy\right\}\\
		& \geq \lim_{\e\ra 0} \lim_{n\ra 0} \left\{a \int_{\mathbb R^N} \psi_{\e,j} d\omega - \int_{\mathbb R^N} \psi_{\e,j} d\zeta \right\}\\
		&\geq a \omega_j - \zeta_j.
	\end{align*}
	Combining this with \eqref{l41}, it follows that 
	\[ \mbox{either}\quad \omega_{j}\geq \left( a  S_{\ba,\mu}^{\frac{2N-2\ba-\mu}{N-p}}\right)^{\frac{N-p}{N-2\ba-\mu +p}}\quad \mbox{or}\quad \omega_j=0. \]
	
	Now we claim that the first case can not occur. Suppose not, then there exists $j_0\in J$ such that $\omega_{j_0}\geq \left( aS_{\ba,\mu}^{\frac{2N-2\ba-\mu}{N-p}}\right)^{\frac{N-p}{N-2\ba-\mu +p}}$.
	Now the H\"{o}lder inequality, \eqref{se} and the Young inequality yield that
	\begin{align}\label{e1}
		\la \int_{\mathbb R^N} f(x) |g(w)|^{q} dx &\leq \la \|f\|_{\frac{2p^*}{2p^*-q}} S^{-\frac{q}{2p}} \|g^2(w)\|^{\frac{q}{2}}= \left(\left[\left(\frac{1}{p}- \frac{1}{2p_{\ba,\mu}^*}\right)\frac{a}{2}\left(\frac{1}{q}- \frac{1}{4p_{\ba,\mu}^*}\right)^{-1}\right]^{\frac{q}{2p}}\|g^2(w)\|^{\frac{q}{2}}\right)\notag\\
		& \quad\quad \left(\left[\left(\frac{1}{p}- \frac{1}{2p_{\ba,\mu}^*}\right) \frac{a}{2}\left(\frac{1}{q}- \frac{1}{4p_{\ba,\mu}^*}\right)^{-1}\right]^{\frac{-q}{2p}} \la \|f\|_{\frac{2p^*}{2p^*-q}} S^{-\frac{q}{2p}}\right)\notag\\
		&\leq \left(\frac{1}{p}- \frac{1}{2p_{\ba,\mu}^*}\right) \frac{a}{2}\left(\frac{1}{q}- \frac{1}{4p_{\ba,\mu}^*}\right)^{-1} \|g^2(w)\|^p \notag\\
		& \quad \quad + \frac{2p-q}{2p}\left[\left(\frac{1}{q}- \frac{1}{4p_{\ba,\mu}^*}\right) \frac{2}{aS}\left(\frac{1}{p}- \frac{1}{2p_{\ba,\mu}^*}\right)^{-1}\right]^{\frac{q}{2p-q}} {\la}^{\frac{2p}{2p-q}} \|f\|_{\frac{2p^*}{2p^*-q}}^{\frac{2p}{2p-q}}.
	\end{align}
	Using \eqref{e1}, we have
	{\small \begin{align}\label{e6}
			0&>c= \lim_{n\ra \infty}\left(\mc I_{\la}(w_n) - \frac{1}{4 p_{\ba,\mu}^*}\left \langle \mc I^{\prime}_{\la}(w_n), (1+2^{p-1}|g(w_n)|^p)^{\frac{1}{p}} g(w_n) \right\rangle \right)\notag\\
			&=	\lim_{n\ra \infty}\left\{a \int_{\mathbb R^N} \left[\frac1p -\frac{1}{4p_{\ba,\mu}^{*}} \left(1+\frac{2^{p-1} |g(w_n)|^{p}}{1+2^{p-1} |g(v_n)|^{p}}\right)\right] |\nabla w_n|^p  + \left( \frac{1}{2p}-\frac{1}{4p_{\ba,\mu}^{*}}\right) b \left(\int_{\mathbb R^N} |g^{\prime}(w_n)|^p |\nabla w_n|^p \right)^2\right.\notag \\
			&\quad \quad +\left. \left(\frac{1}{4p_{\ba,\mu}^{*}} -\frac{1}{q}\right) \la\int_{\mathbb R^N} f(x)|g(w_n)|^{q} dx\right\} \notag\\
			&\geq \; \lim_{n\ra \infty}\left\{ \frac{a}{2} \left(\frac1p -\frac{1}{2p_{\ba,\mu}^{*}}\right) \int_{\mathbb R^N} |\nabla g^2(w_n)|^p dx  - \left(\frac{1}{q}- \frac{1}{4p_{\ba,\mu}^{*}} \right) \la \int_{\mathbb R^N} f(x)|g(w_n)|^{q} dx \right\}\notag \\
			& \geq \left(\frac{1}{p}- \frac{1}{2p_{\ba,\mu}^*}\right) \frac{a}{2} \left(\|g^2(w)\|^{p}+ \sum_{j\in J} \omega_j \right)- \left(\frac{1}{q}- \frac{1}{4p_{\ba,\mu}^*}\right) \la \int_{\mathbb R^N} f(x) |g(w_n)|^{q} dx\notag \\
			&\geq \left(\frac{1}{p}- \frac{1}{2p_{\ba,\mu}^*}\right) \frac{a}{2} \omega_{j_0} - \frac{2p-q}{2p}\left[\left(\frac{1}{q}- \frac{1}{4p_{\ba,\mu}^*}\right) \frac{2}{aS}\left(\frac{1}{p}- \frac{1}{2p_{\ba,\mu}^*}\right)^{-1}\right]^{\frac{2p}{2p-q}} {\la}^{\frac{2p}{2p-q}} \|f\|_{\frac{2p^*}{2p^*-q}}^{\frac{2p}{2p-q}}\notag\\
			&\geq \left(\frac{1}{2p}- \frac{1}{4p_{\ba,\mu}^*}\right)\left( a  S_{\ba,\mu}\right)^{\frac{p_{\ba,\mu}^*}{p_{\ba,\mu}^*-1}} - \frac{2p-q}{2p}\left[\left(\frac{1}{q}- \frac{1}{4p_{\ba,\mu}^*}\right) \frac{2}{aS}\left(\frac{1}{p}- \frac{1}{2p_{\ba,\mu}^*}\right)^{-1}\right]^{\frac{q}{2p-q}} {\la}^{\frac{2p}{2p-q}} \|f\|_{\frac{2p^*}{2p^*-q}}^{\frac{2p}{2p-q}}.
	\end{align}}
	Choose $\la_1>0$ so small such that for every $\la\in(0,\la_1)$, the right hand side of \eqref{e6} is greater than zero, which gives a contradiction.

	To obtain the possible concentration of mass at infinity, similarly, we can define a cut-off function \(\psi_{R}\in C^{\infty}(\mathbb R^N)\) such that \(\psi_{R}(x)=0\) on \(|x|<R\), \(\psi_{R}(x)=1\) on \(|x|>R+1\) and \(|\nabla \psi_{R}|\leq \frac{2}{R}\). Let 
	\begin{align*}
		\omega_{\infty} := \lim_{R\ra \infty} \limsup_{n\ra \infty} \int_{|x|\geq R} |\na g^2(w_n)|^p dx, \quad \nu_{\infty}:= \lim_{R\ra \infty} \limsup_{k\ra \infty} \int_{|x|\geq R} |g(w_n)|^{2p^*} dx\\
		\zeta_{\infty} := \lim_{R\ra \infty} \limsup_{n\ra \infty} \int_{|x|> R}\left(\int_{\mathbb R^N}\frac{|g(w_{n})(y)|^{2p^{*}_{\ba,\mu}}}{|y|^{\ba}|x-y|^{\mu}}dy\right)\frac{|g(w_{n})(x)|^{2p^{*}_{\ba,\mu}}}{|x|^{\ba}} dx.
	\end{align*}
	
	Now applying Proposition \ref{P1}, the H\"{o}lder inequality and Lemma \ref{L1}-$(g_6)$, we deduce
	\begin{align*}
		\zeta_{\infty}&= \lim_{R\ra \infty} \lim_{n\ra \infty} \left(\int_{\mathbb R^N}\frac{|g(w_{n})(y)|^{2p^{*}_{\ba,\mu}}}{|y|^{\ba}|x-y|^{\mu}}\right)\frac{|g(w_{n})(x)|^{2p^{*}_{\ba,\mu}}}{|x|^{\ba}} \psi_{R}(x) dx\\
		&\leq C({N,\ba,\mu} )\lim_{R\ra \infty} \lim_{n\ra \infty} |g^2(w_n)|^{p^{*}_{\ba,\mu}}_{p^*} \left(\int_{\mathbb R^N} |g(w_n(x))|^{2p*}\psi_{R}(x) dx\right)^{\frac{p^{*}_{\ba,\mu}}{p^{*}}}
		\leq K \nu_{\infty}^{\frac{p_{\ba,\mu}^{*}}{p^*}}.
	\end{align*}
	Using the fact \(\langle \mc I^{\prime}_{\la}(w_n), \frac{g(w_n)}{g^{\prime}(w_n)} \psi_{R}\rangle \ra 0\), we get
	\begin{align}\label{3.20}
		&- \lim_{n\ra \infty}\left[ a \int_{\mathbb R^N} \frac{g(w_n)}{g^{\prime}(w_n)} |\nabla w_n|^{p-2}\nabla w_n \nabla \psi_{R} dx + b \left(\int_{\mathbb R^N} |g^{\prime}(w_n)|^{p} |\nabla w_n|^p dx\right) \left(\int_{\mathbb R^N}\frac{g(w_n)|\nabla w_n|^{p-2} \nabla w_n \nabla \psi_{R}}{ (1+{2^{p-1} |g(w_n)|^{p}})^{\frac{1}{p}}} dx\right) dx \right]\notag\\
		&=\lim_{n\ra \infty}\left[ a \int_{\mathbb R^N} \left(1+\frac{2^{p-1} |g(w_n)|^{p}}{1+2^{p-1} |g(w_n)|^{p}}\right) |\nabla w_n|^p  \psi_{R} dx + b \left( \int_{\mathbb R^N} \frac{|\nabla w_n|^p}{1+2^{p-1} |g(w_n)|^{p}} dx\right) \left(\int_{\mathbb R^N}\frac{|\nabla w_n|^{p-2}\nabla w_n \nabla \psi_{R}}{1+{2^{p-1} |g(w_n)|^{p}}} dx\right)\right.\notag\\
		&\quad \quad-\la\left. \int_{\mathbb R^N} f(x)|g(w_n)|^{q}\psi_{R} dx - \int_{\mathbb R^N} \int_{\mathbb R^N} \frac{|g(w_{n}(y))|^{2p_{\beta,\mu}^*}|g(w_{n}(x))|^{2p_{\beta,\mu}^*}\psi_{R}(x) }{|y|^\beta|x-y|^\mu|x|^{\ba}}dydx  \right].
	\end{align}
	One can easily show that
	\[\lim_{R\ra \infty} \lim_{n\ra \infty} a \int_{\mathbb R^N} (1+2^{p-1} |g(w_n)|^{p})^{\frac{1}{p}} g(w_n)|\nabla w_n|^{p-2} \nabla w_n \nabla \psi_R dx =0,\]
	\[\lim_{R\ra \infty}\lim_{n\ra \infty} \left[b \left(\int_{\mathbb R^N}\frac{|\na w_n|^p}{1+2^{p-1} |g(w_n)|^{p}} dx\right) \left(\int_{\mb R^N} \frac{g(w_n)
		|\nabla w_n|^{p-2}\nabla w_n \nabla \psi_{R}}{(1+2^{p-1} |g(w_n)|^{p})^{\frac{1}{p}}}\right) \right]=0,\]
	and
	\[\lim_{R\ra \infty}\lim_{n\ra \infty} \int_{\mathbb R^N}f(x)|g(w_n(x))|^{q} \psi_{R}(x) dx =0.\]
	Using the above in \eqref{3.20}, we obtain
	\begin{align}
		0=&\lim_{R\rightarrow \infty} \lim_{n\ra \infty}\left[ a \int_{\mathbb R^N} \left(1+\frac{2^{p-1} |g(w_n)|^{p}}{1+2^{p-1} |g(w_n)|^{p}}\right) |\nabla w_n|^p  \psi_{R} dx- \int_{\mb R^N} \frac{|g(w_{n}(y))|^{2p_{\beta,\mu}^*}|g(w_{n}(x))|^{2p_{\beta,\mu}^*}\psi_{R}(x) }{|y|^\beta|x-y|^\mu|x|^{\ba}}dydx\right.\notag\\
		&\quad\left. + b \left(\int_{\mathbb R^N} \frac{|\nabla w_n|^p}{1+2^{p-1} |g(w_n)|^{p}} dx\right) \left(\int_{\mathbb R^N}\frac{|\nabla w_n|^{p}  \psi_{R}}{ \sqrt{1+{2^{p-1} |g(w_n)|^{p}}}} dx\right) dx \right] \notag\\
		\geq& \lim_{R\ra \infty}\lim_{n\ra \infty}\left[ a \int_{\mathbb R^N} |\nabla g^{2}(w_n)|^p  \psi_{R} dx - \int_{\mathbb R^N} \frac{|g(w_{n}(y))|^{2p_{\beta,\mu}^*}|g(w_{n}(x))|^{2p_{\beta,\mu}^*}\psi_{R}(x) }{|y|^\beta|x-y|^\mu|x|^{\ba}}dydx  \right]\notag\\
		=& a \omega_{\infty}- K \nu_{\infty}^{\frac{p_{\beta,\mu}^{*}}{p^*}}.
	\end{align}
	Thus, $a \omega_{\infty} \leq K \nu_{\infty}^{\frac{p_{\ba,\mu}^{*}}{p^*}}$. This together with Lemma \ref{c1} yields that
	\begin{align}
		\omega_{\infty}\geq \left( K^{-1} a S^{\frac{p_{\beta, \mu}^{*}}{p}} \right)^{\frac{p}{p_{\ba,\mu}^{*}-p}}\, \mbox{or}\; \omega_{\infty}=0.
	\end{align}
	If $\omega_{\infty}\geq \left( K^{-1} a S^{\frac{p_{\beta,\mu}^{*}}{p}} \right)^{\frac{p}{p_{\ba,\mu}^{*}-p}},$ then we have
	\begin{align}\label{e7}
		0&> c = \lim_{R\ra \infty}\lim_{n\ra \infty} \left(\mc I_{\la} (w_n) -\frac{1}{4p_{\ba,\mu}^*} \left\langle \mc I^{\prime}_{\la}(w_n), \frac{g(w_n)}{g^{\prime}(w_n)}\right\rangle\right)\notag\\
		&\geq \lim_{R\ra \infty}\lim_{n\ra \infty} \left\{ \left(\frac{1}{p}- \frac{1}{2p_{\ba,\mu}^*}\right)a \int_{\mathbb R^N} |\nabla w_n|^p \psi_{R} dx -  \left(\frac{1}{q}- \frac{1}{4 p_{\ba, \mu}^{*}}\right)\int_{\mathbb R^N} f(x)|g(w_n)|^{q}  dx\right\}\notag\\
		&\geq \lim_{\e\ra 0}\lim_{n\ra \infty} \left\{ \left(\frac{1}{p}- \frac{1}{2 p_{\ba,\mu}^*}\right)\frac{a}{2} \int_{\mathbb R^N} |\nabla g^{2}(w_n)|^p dx -  \left(\frac{1}{q}- \frac{1}{4 p_{\ba, \mu}^{*}}\right)\int_{\mathbb R^N} f(x)|g(w_n)|^{q} dx\right\}\notag\\
		&\geq \left(\frac{1}{2p}- \frac{1}{4p_{\ba,\mu}^*}\right)\left( a  S\right)^{\frac{p_{\ba,\mu}^*}{p_{\ba,\mu}^*-p}}K^{\frac{-p}{p_{\ba,\mu}^*-p}} - \frac{2p-q}{2p}\left[\left(\frac{1}{q}- \frac{1}{4p_{\ba,\mu}^*}\right) \frac{2}{aS}\left(\frac{1}{p}- \frac{1}{2p_{\ba,\mu}^*}\right)^{-1}\right]^{\frac{q}{2p-q}} {\la}^{\frac{2p}{2p-q}} \|f\|_{\frac{2p^*}{2p^*-q}}^{\frac{2p}{2p-q}}.
	\end{align}
	Choose $\la_2>0$ so small such that for every $\la\in(0,\la_2)$, the right hand side of \eqref{e7} is greater than zero, which gives a contradiction.
		Now from the above arguments, for any $c<0$, there exist $\la^*=\min\{\la_1,\la_2\}>0$, we have \(\omega_j=0\) for all $j\in J$ and $\omega_{\infty}=0$ for all \(\la\in (0, \la^*)\). Hence
	\begin{align*}
		\lim_{k\ra \infty}\int_{\mathbb R^N}\int_{\mathbb R^N} \frac{|g(w_n(x))|^{2p^*_{\ba,\mu}}|g(w_n(y))|^{2p^*_{\ba,\mu}}}{|x|^{\ba}|x-y|^{\mu}|y|^{\ba}} dx dy = \int_{\mathbb R^N}\int_{\mathbb R^N} \frac{|g(w(x))|^{2p^*_{\ba,\mu}}|g(w(y))|^{2p^*_{\ba,\mu}}}{|x|^{\ba}|x-y|^|y|^{\ba}} dx dy .
	\end{align*}
	and
	\begin{align*}
		\lim_{k\ra \infty}\int_{\mathbb R^N} f(x) (|g(w_n(x))|^{q} - |g(w(x))|^{q}) dx \leq \|f\|_{\frac{2p^*}{2p^*-q}} \||g(w_n(x))|^{q} - |g(w(x))|^{q}\|_{\frac{2p^*}{q}} = 0.
	\end{align*}
	Since $\{w_n\}$ is bounded in $D^{1,p}(\mb R^N)$ and $ \mc I^{\prime}_{\la}(w)=0$, the weak lower semicontinuity of the norm, Lemma \ref{sk} and the Br{e}zis-Lieb Lemma (see \cite{bz}) yield that as $n\ra \infty$,
	\begin{align*}
		o_n(1)\|w_n\|&= \left \langle \mc I^{\prime}_{\la}(w_n), (1+2^{p-1}|g(w_n)|^p)^{\frac{1}{p}} g(w_n) \right\rangle \\
		&= a \int_{\mathbb R^N} \left(1+\frac{2^{p-1} |g(w_n)|^{p}}{1+2^{p-1} |g(w_n)|^{p}}\right) |\nabla w_n|^p  dx + b \left( \int_{\mathbb R^N} \frac{|\nabla w_n|^p}{1+2^{p-1} |g(w_n)|^{p}} dx\right)^2\\
		& \quad - \la \int_{\mathbb R^N} f(x) |g(w_n)|^{q} dx - \int_{\mathbb R^N}\int_{\mathbb R^N} \frac{|g(w_n(x))|^{2p^*_{\ba,\mu}}|g(w_n(y))|^{2p^*_{\ba,\mu}}}{|x|^{\ba}|x-y|^{\mu}|y|^{\ba}} dx dy  \\
		&= a\|w_n\|^{p}+ a \int_{\mathbb R^N} \frac{2^{p-1} |g(w_n)|^{p}}{1+2^{p-1} |g(w_n)|^{p}} |\nabla w_n|^p  dx + b \left( \int_{\mathbb R^N} \frac{|\nabla w_n|^p}{1+2^{p-1} |g(w_n)|^{p}} dx\right)^2\\
		&\quad - \la \int_{\mathbb R^N} f(x) |g(w_n)|^{q} dx - \int_{\mathbb R^N}\int_{\mathbb R^N} \frac{|g(w_n(x))|^{2p^*_{\ba,\mu}}|g(w_n(y))|^{2p^*_{\ba,\mu}}}{|x|^{\al}|x-y|^{\mu}|y|^{\ba}} dx dy  \\
		&\geq a(\|w_n - w\|^p) + a \|w\|^p + b  \left( \int_{\mathbb R^N} \frac{|\nabla w|^p}{1+2^{p-1} |g(w)|^{p}} dx\right)^2- \la \int_{\mathbb R^N} f(x) |g(w)|^{q} dx \\
		&\quad - \int_{\mathbb R^N}\int_{\mathbb R^N} \frac{|g(w(x))|^{2p^*_{\ba,\mu}}|g(w(y))|^{2p^*_{\ba,\mu}}}{|x|^{\ba}|x-y|^{\mu}|y|^{\ba}} dx dy \\
		&= a \|w_n -w\|^p + o_n(1)\|w\|.
	\end{align*}
	Thus $\{w_n\}$ converges strongly to $w$ in $D^{1,p}(\mathbb R^N)$. This completes the proof of the Lemma.
\end{proof}
\begin{lemma}\label{ce}
  Assume that $q=2p$ and \eqref{assumption1} hold. Let  \(\{w_n\}\) be a $(PS)_c$ sequence for $\mc I_{\la}$ in $D^{1,p}(\mathbb R^N)$ with $$c<c^*:=\frac{1}{4p}\left( aS_{\beta,\mu}\right)^{\frac{p_{\ba,\mu}^*}{p_{\ba,\mu}^*-1}}.$$ Then for all \(\la\in(0,aS\|f\|_{\frac{p^*}{p^*-p}}^{-1})\), $\{w_n\}$ satisfies the $(PS)_c$  condition.
\end{lemma}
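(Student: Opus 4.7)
The plan is to parallel the argument of Lemma~\ref{cc}, adapting the energy accounting to the critical coupling $q=2p$ in which the subcritical perturbation no longer admits a small power of $\la$ for absorption. First, by Lemma~\ref{lbb}, the hypothesis $\la \in (0,aS\|f\|^{-1}_{p^*/(p^*-p)})$ forces $\{w_n\}$ to be bounded in $D^{1,p}(\mb R^N)$; extract a subsequence with $w_n \rightharpoonup w$ and $w_n \to w$ a.e. By Lemma~\ref{L1}-$(g_5),(g_{11})$ and the smoothness of $g$, the family $\{g^2(w_n)\}$ is likewise bounded in $D^{1,p}(\mb R^N)$ with $g^2(w_n)\to g^2(w)$ a.e.; applying Lemma~\ref{c1} to this family produces measures $\omega,\nu,\zeta$, a countable set $\{x_j\}_{j\in J}\subset \mb R^N$, and coefficients $\omega_j,\nu_j,\zeta_j$ obeying \eqref{l111}--\eqref{l41}.

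Next, test $\mc I_\la'(w_n)$ against the localized field $v_n=\tfrac{g(w_n)}{g'(w_n)}\psi_{\e,j}$, where $\psi_{\e,j}$ is the cut-off centered at $x_j$ used in the proof of Lemma~\ref{cc}. The contributions of $\nabla\psi_{\e,j}$ to the $a$- and $b$-terms vanish in the iterated limit $n\to\infty$, $\e\to 0$ exactly as in \eqref{c3}--\eqref{c2}. For the subcritical integral with $q=2p$, property $(g_6)$ together with H\"older gives
\[\la\int_{\mb R^N}f(x)|g(w_n)|^{2p}\psi_{\e,j}\,dx \;\leq\; 2\la\|f\,\mathbf{1}_{B(x_j,\e)}\|_{\frac{p^*}{p^*-p}}\|w_n\|_{p^*}^{p},\]
and the first factor vanishes as $\e\to 0$ by absolute continuity of the integral. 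Combined with \eqref{gg} and the inequalities \eqref{l41}, this produces $a\omega_j\leq \zeta_j$ and hence the dichotomy $\omega_j=0$ or $\omega_j\geq \bigl(aS_{\beta,\mu}^{(2N-2\beta-\mu)/(N-p)}\bigr)^{(N-p)/(N-2\beta-\mu+p)}$ for each $j\in J$.

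The crucial step is excluding the nontrivial alternative. Evaluating
\[c=\lim_{n\to\infty}\Big(\mc I_\la(w_n)-\tfrac{1}{4p^*_{\beta,\mu}}\big\langle \mc I_\la'(w_n),\tfrac{g(w_n)}{g'(w_n)}\big\rangle\Big),\]
the critical Stein-Weiss contribution cancels, the $b$-term is nonnegative, and one is left with a positive multiple of $\|\nabla g^2(w_n)\|_p^p$ offset by $\bigl(\tfrac{1}{2p}-\tfrac{1}{4p^*_{\beta,\mu}}\bigr)\la\int f|g(w_n)|^{2p}$. Unlike Lemma~\ref{cc}, Young's inequality offers no open exponent; instead, I would bound the perturbation by $2\la S^{-1}\|f\|_{p^*/(p^*-p)}\|w_n\|^p$ using $(g_6)$, H\"older and \eqref{se}, and use the hypothesis $\la\|f\|_{p^*/(p^*-p)}<aS$ to retain a strictly positive gradient coefficient proportional to $aS-\la\|f\|_{p^*/(p^*-p)}$. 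The concentration mass $\omega_{j_0}$ from the previous paragraph would then force $c\geq \tfrac{1}{4p}(aS_{\beta,\mu})^{p^*_{\beta,\mu}/(p^*_{\beta,\mu}-1)}=c^*$, contradicting $c<c^*$; hence every $\omega_j=0$. Repeating the construction with $\psi_R$ supported outside $B_R(0)$, following the second half of Lemma~\ref{cc} with the same absorption argument, yields $\omega_\infty=0$.

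With all concentration excluded, Lemma~\ref{sk} passes to the limit in the Stein-Weiss nonlinearity; testing $\langle \mc I_\la'(w_n),(1+2^{p-1}|g(w_n)|^p)^{1/p}g(w_n)\rangle \to 0$ and applying Br\'ezis-Lieb together with $\mc I_\la'(w)=0$ then delivers $a\|w_n-w\|^p=o_n(1)$, proving strong convergence. The main obstacle is the third step: at $q=2p$ the perturbation scales identically to the leading term, so the smallness can only come from the gap $aS-\la\|f\|_{p^*/(p^*-p)}>0$, and the precise calibration of the constants $\tfrac{1}{p}-\tfrac{1}{2p^*_{\beta,\mu}}$ and $\tfrac{1}{2p}-\tfrac{1}{4p^*_{\beta,\mu}}$ against the Lions lower bound on $\omega_{j_0}$ is what pins down exactly the threshold $c^*=\tfrac{1}{4p}(aS_{\beta,\mu})^{p^*_{\beta,\mu}/(p^*_{\beta,\mu}-1)}$.
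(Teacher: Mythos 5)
Your overall architecture matches the paper's: boundedness from Lemma \ref{lbb}, concentration--compactness via Lemma \ref{c1} applied to $g^2(w_n)$, the local dichotomy coming from $a\omega_j\le\zeta_j$ and \eqref{l41}, exclusion of atoms by a level estimate, the analogous argument at infinity, and the Br\'ezis--Lieb/weak lower semicontinuity conclusion. The gap is in the exclusion step, exactly at the point you yourself flag as delicate. You propose to absorb the perturbation through $\la\int_{\mb R^N} f|g(w_n)|^{2p}\,dx\le 2\la S^{-1}\|f\|_{\frac{p^*}{p^*-p}}\|w_n\|^p$ and to ``retain a gradient coefficient proportional to $aS-\la\|f\|_{\frac{p^*}{p^*-p}}$''. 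But this controls the perturbation by the norm of the \emph{sequence}, which carries the concentration mass: after invoking $\liminf_{n}\|g^2(w_n)\|^p\ge\|g^2(w)\|^p+\omega_{j_0}$ (here $\|g^2(w)\|^p=\int_{\mb R^N}|\nabla g^2(w)|^p dx$), the atom $\omega_{j_0}$ ends up multiplied by a constant proportional to $a-\la S^{-1}\|f\|_{\frac{p^*}{p^*-p}}$, which tends to $0$ as $\la\uparrow aS\|f\|_{\frac{p^*}{p^*-p}}^{-1}$. The resulting lower bound on $c$ is $\la$-dependent and does \emph{not} dominate the fixed threshold $c^*=\frac{1}{4p}(aS_{\ba,\mu})^{p_{\ba,\mu}^*/(p_{\ba,\mu}^*-1)}$ near the endpoint of the $\la$-interval; a short computation shows your argument only closes for $\la\le aS\|f\|_{\frac{p^*}{p^*-p}}^{-1}\cdot\frac{p_{\ba,\mu}^*-p}{2p_{\ba,\mu}^*-p}$, a proper subinterval.

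The paper's remedy is to decouple the atom from the absorption. Since $|g^2(w_n)|^{p}$ is bounded in $L^{p^*/p}(\mb R^N)$ and converges a.e., the perturbation passes to the limit as $\int_{\mb R^N} f|g(w)|^{2p}dx\le S^{-1}\|f\|_{\frac{p^*}{p^*-p}}\|g^2(w)\|^p$, i.e.\ it is controlled by the \emph{weak limit} alone, which sees none of the atomic mass. Subtracting it only from the absolutely continuous part of $\liminf_n\|g^2(w_n)\|^p\ge\|g^2(w)\|^p+\omega_{j_0}$ leaves the full coefficient $\frac{a}{4p}$ on $\omega_{j_0}$; the leftover term $\frac{1}{4p}\bigl(a-\la S^{-1}\|f\|_{\frac{p^*}{p^*-p}}\bigr)\|g^2(w)\|^p$ is nonnegative precisely because $\la<aS\|f\|_{\frac{p^*}{p^*-p}}^{-1}$ and is simply discarded, so that $c\ge\frac{a}{4p}\omega_{j_0}\ge c^*$ gives the contradiction uniformly in $\la$. (The choice of multiplier is secondary: the paper uses $\frac{1}{2q}=\frac{1}{4p}$ and drops the then-nonnegative critical term, while your $\frac{1}{4p_{\ba,\mu}^*}$ cancels it; either works once the absorption is performed against $w$ rather than $w_n$.) The same correction is needed in your treatment of $\omega_\infty$. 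The remaining steps of your proposal are sound and coincide with the paper's.
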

\begin{proof}
	For each \(w\in D^{1,p}(\mathbb R^N)\),	using Lemma \ref{L1}-$(g_6)$, the H\"{o}lder inequality and Sobolev inequality \eqref{se}, we obtain
	\begin{align*}
		\int_{\mb R^N} f(x)|g(w)|^{2p} dx \leq S^{-1} \|f\|_{\frac{p^*}{p^*-p}}  \|g^2(w)\|^p.
	\end{align*}
	Let $\{w_n\}$ be a $(PS)_c$	for $\mc I_\la$ for $c<c^*$. Then $\{w_n\}$ is bounded from Lemma \ref{lbb}. Now using the last estimate, for  all \(\la\in(0,aS\|f\|_{\frac{p^*}{p^*-p}}^{-1})\), arguing similarly as in Lemma \ref{lb}, in substitute of \eqref{e6}, we obtain
	\begin{align*}
		c^*> c &= \lim_{n\ra \infty} \left(\mc I_{\la}(w_n) - \frac{1}{4p}\left\langle \mc I^{\prime}_{\la}(w_n), \frac{g(w_n)}{g^{\prime}(w_n)}\right\rangle\right)\\
				&=	\lim_{n\ra \infty}\left\{a \int_{\mathbb R^N} \left[\frac1p -\frac{1}{4p} \left(1+\frac{2^{p-1} |g(w_n)|^{p}}{1+2^{p-1} |g(v_n)|^{p}}\right)\right] |\nabla w_n|^p  + \left( \frac{1}{2p}-\frac{1}{4p}\right) b \left(\int_{\mathbb R^N} |g^{\prime}(w_n)|^p |\nabla w_n|^p \right)^2\right.\notag \\
			&\quad \quad +\left. \left(\frac{1}{4p} -\frac{1}{2p}\right) \la\int_{\mathbb R^N} f(x)|g(w_n)|^{q} dx+\left(\frac{1}{4p} -\frac{1}{4p_{\ba,\mu}^*}\right)\int_{\mathbb R^N}\int_{\mathbb R^N} \frac{|g(w(x))|^{2p^*_{\ba,\mu}}|g(w(y))|^{2p^*_{\ba,\mu}}}{|x|^{\ba}|x-y|^{\mu}|y|^{\ba}} dx dy\right\} \notag\\
		& \geq 	\lim_{n\ra \infty}\left\{ \frac{a}{2p} \|w_n\|^p - \left(\frac{1}{2p}- \frac{1}{4p}\right)  \la S^{-1} \|f\|_{\frac{p^*}{p^*-p}} \|g^2(w_n)\|^p \right\}\\
			& \geq 	\lim_{n\ra \infty}\left\{ \frac{a}{4p} \|g^2(w_n)\|^p\right\} - \left(\frac{1}{2p}- \frac{1}{4p}\right)\la S^{-1} \|f\|_{\frac{p^*}{p^*-p}} \|g^2(w)\|^p \\
				& \geq \frac{a}{4p} w_{i_0} +  \frac{1}{4p}(a-\la S^{-1} \|f\|_{\frac{p^*}{p^*-p}}) \|g^2(w)\|^p \\
		&\geq \frac{1}{4p} a w_{i_0} \geq \frac {1}{4p}\left( aS_{\ba,\mu}\right)^{\frac{p_{\ba,\mu}^*}{p_{\ba,\mu}^*-1}}:= c^*,
	\end{align*}
	which is absurd. Now the rest of the proof follows in similar manner as in the proof of Lemma \ref{cc}.
\end{proof}
\begin{lemma}\label{cf}
Let  $2p<q<2p^*$ and \eqref{assumption1} hold. Suppose that $\{w_n\}$ is a $(PS)_c$ sequence for $\mc I_{\la}$ in $D^{1,p}(\mathbb R^N)$ with $$c<c^{**}:=\left(\frac{1}{2p}-\frac{1}{q}\right)\left( aS_{\ba,\mu}\right)^{\frac{p_{\ba,\mu}^*}{p_{\ba,\mu}^*-1}}.$$ Then $\{w_n\}$ satisfies $(PS)_c$ condition.
\end{lemma}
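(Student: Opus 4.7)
The plan is to mirror the concentration--compactness strategy of Lemma \ref{cc} and Lemma \ref{ce}, but exploit the fact that $q>2p$ to kill the need for any smallness assumption on $\lambda$. Boundedness of the sequence $\{w_n\}$ in $D^{1,p}(\mathbb R^N)$ is already Lemma \ref{lbbb}, so I pass to a subsequence with $w_n\rightharpoonup w$ weakly and a.e.\ in $\mathbb R^N$. Using $(g_{11})$ (which gives $|\nabla g^2(w_n)|^p\le 2|\nabla w_n|^p$), the sequence $\{g^2(w_n)\}$ is also bounded in $D^{1,p}(\mathbb R^N)$, so Lemma \ref{c1} applies and produces measures $\omega,\nu,\zeta$ with concentration masses $\omega_j,\nu_j,\zeta_j$ at a countable set of points $\{x_j\}$, plus the energies at infinity $\omega_\infty,\nu_\infty,\zeta_\infty$, satisfying the relations (\ref{l111})--(\ref{l41}).

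Next, for each concentration point $x_j$ I test $\langle \mathcal{I}_\lambda'(w_n),v_n\psi_{\varepsilon,j}\rangle\to 0$ with $v_n=g(w_n)/g'(w_n)$ and the standard localizing cutoff $\psi_{\varepsilon,j}$, exactly as in Lemma \ref{cc}. The Kirchhoff and pure $p$-Laplace boundary terms involving $\nabla\psi_{\varepsilon,j}$ vanish in the limit $n\to\infty$, $\varepsilon\to 0$ by Hölder and the strong $L^{p^*}_{\loc}$ convergence, and the subcritical term $\lambda\int f(x)|g(w_n)|^q\psi_{\varepsilon,j}\,dx$ vanishes because concentration at a point has zero Lebesgue measure. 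What remains is the inequality $a\omega_j\le \zeta_j$, which combined with $S_{\beta,\mu}\zeta_j^{p/(2p^*_{\beta,\mu})}\le\omega_j$ forces the dichotomy: either $\omega_j=0$ or $\omega_j\ge (aS_{\beta,\mu})^{p^*_{\beta,\mu}/(p^*_{\beta,\mu}-1)}$. To rule out the second alternative I use the combination from Lemma \ref{lbbb},
\begin{align*}
c &= \lim_{n\to\infty}\Bigl(\mathcal{I}_\lambda(w_n)-\tfrac{1}{q}\langle \mathcal{I}_\lambda'(w_n),v_n\rangle\Bigr) \\
 &\ge \Bigl(\tfrac{1}{p}-\tfrac{2}{q}\Bigr)\tfrac{a}{2}\,\|g^2(w)\|^p + \Bigl(\tfrac{1}{2p}-\tfrac{1}{q}\Bigr) a\,\omega_{j_0} + (\text{other nonnegative terms})\\
 &\ge \Bigl(\tfrac{1}{2p}-\tfrac{1}{q}\Bigr)(aS_{\beta,\mu})^{p^*_{\beta,\mu}/(p^*_{\beta,\mu}-1)} = c^{**},
\end{align*}
where the crucial point is that $2p<q<2p^*<4p^*_{\beta,\mu}$ makes every coefficient (the kinetic one, the Kirchhoff one, the subcritical $\lambda$-one, and the nonlocal critical one) nonnegative, so no $\lambda$-smallness is needed. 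This contradicts $c<c^{**}$, so all $\omega_j$ vanish.

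The loss of mass at infinity is handled analogously using the cutoff $\psi_R$ with $\psi_R=1$ on $\{|x|>R+1\}$, leading to $a\omega_\infty\le K\nu_\infty^{p^*_{\beta,\mu}/p^*}$ via the Stein-Weiss inequality, hence the same dichotomy with the same contradiction. With no concentration at finite points or at infinity, I conclude $\int\!\!\int\frac{|g(w_n)|^{2p^*_{\beta,\mu}}|g(w_n)|^{2p^*_{\beta,\mu}}}{|x|^\beta|x-y|^\mu|y|^\beta}\to \int\!\!\int\frac{|g(w)|^{2p^*_{\beta,\mu}}|g(w)|^{2p^*_{\beta,\mu}}}{|x|^\beta|x-y|^\mu|y|^\beta}$ by Lemma \ref{sk}, while the subcritical integral converges by the compact Vitali/dominated convergence argument using $f\in L^{2p^*/(2p^*-q)}$. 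A Brézis--Lieb decomposition in $\langle \mathcal{I}_\lambda'(w_n),v_n\rangle\to 0$, together with $\mathcal{I}_\lambda'(w)=0$, then yields $a\|w_n-w\|^p\to 0$. The main obstacle I anticipate is the careful bookkeeping in step two: the $g$-nonlinearity makes the localized identity messier than for the plain $p$-Laplacian, and one must track that the $\lambda$-term and the Kirchhoff bracket terms with $\nabla\psi_{\varepsilon,j}$ are truly negligible before comparing $a\omega_j$ with $\zeta_j$; this requires precise use of $(g_4),(g_6),(g_{11})$ and the boundedness of $\{w_n\}$.
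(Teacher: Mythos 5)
Your proposal is correct and follows essentially the same route as the paper: boundedness from Lemma \ref{lbbb}, the concentration--compactness machinery of Lemma \ref{c1} applied exactly as in Lemma \ref{cc} to obtain the dichotomy $a\omega_j\le\zeta_j$ at each concentration point and at infinity, and the contradiction via the combination $\mathcal{I}_\lambda(w_n)-\tfrac{1}{q}\langle\mathcal{I}_\lambda'(w_n),v_n\rangle$ in which $2p<q<2p^*<4p^*_{\beta,\mu}$ makes all coefficients nonnegative so that no smallness of $\lambda$ is required. The paper's proof is just a terser version of the same argument, deferring the details to Lemmas \ref{lb} and \ref{cc}.
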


\begin{proof}
	Let $\{w_n\}$ a $(PS)_c$	for $\mc I_\la$ for $c<c^{**}$.  Then by Lemma \ref{lbbb}, we have $\{w_n\}$ is bounded. Now  following the similar arguments as in Lemma \ref{lb}, in place of \eqref{e6}, we get
	\begin{align*}
		c^{**}> c &= \lim_{n\ra \infty} \left(I_{\la}(w_n) - \frac{1}{q}\left\langle \mc I^{\prime}_{\la}(w_n), \frac{g(w_n)}{g^{\prime}(w_n)}\right\rangle\right)\\
		& \geq \left(\frac{1}{p}-\frac{2}{q}\right)\frac{a}{2} w_{i_0}
		\geq \left(\frac{1}{2p}-\frac{1}{q}\right)\left( aS_{\ba,\mu}\right)^{\frac{p_{\ba,\mu}^*}{p_{\ba,\mu}^*-1}}:= c^{**},
	\end{align*}
	which is a contradiction. The rest of the proof follows as in the proof of Lemma \ref{cc}.
\end{proof}

	\section{\bf Proof of Theorem \ref{main.result.1}}
\noi In this section, we give proof of Theorem \ref{main.result.1}. Before proving our result, first we recall the definition of genus.
\begin{definition} Let $X$ be a Banach space and $A$ be a subset of $X$. The set $A$ is said to be symmetric if $u \in A$ implies $-u \in A.$ For a closed symmetric set $A$ which does not contain the
	origin, we define a genus $\gamma (A)$ of $A$ by the smallest integer $k$ such that there exists an
	odd continuous mapping from $A$ to $R^k \setminus \{0\}$. If there does not exist such  $k$, we define
	$\gamma (A)=\infty$ . Moreover, we set $\gamma (\emptyset)=0$ .
	\end{definition}
For any $k\in\mathbb N$, let us define the set $\Sigma_k$ as 
\[\Sigma_k:=\{A\;:\; A\subset X \text{ is closed symmetric }, 0\not\in A,\; \gamma (A)\geq k \}.\] 	Now to prove Theorem \ref{main.result.1}, we use a result by Kajikiya ( see \cite[ Theorem 1]{kajikiya}), which is an extension  of the symmetric mountain pass theorem.
\begin{theorem}\label{sym mt}
Let $X$ be an infinite dimensional Banach space and $\mc J\in C^1(X,\mb R)$. Suppose that the following hypotheses hold.
\begin{itemize}
\item[$(\mathcal A_1)$] The functional $\mc J$ is even and bounded from below in $X$, $\mc J(0)=0$ and $\mc J$ satisfies the local Palais-Smale condition.
\item [$(\mc A_2)$] For each $k\in \mathbb N,$ there exists $A_k\in \Sigma_k$ such that $$\sup_{u\in A_k} \mc J(u)<0.$$
\end{itemize} Then $\mc J$ admits a sequence of critical points $\{u_k\}$ in $X$ such that $u_k\not =0$, $\mc J(u_k)\leq 0$ for each $k$  and $u_k \to 0$ in $X$ as $k\to\infty$.
\end{theorem}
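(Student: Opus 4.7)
The plan is to produce the critical points via a Lusternik--Schnirelmann--type minimax over classes of sets with prescribed Krasnoselskii genus. For each $k\in\mathbb N$, define
\[ c_k := \inf_{A\in\Sigma_k}\,\sup_{u\in A}\mc J(u). \]
Since $\mc J$ is bounded below, $c_k>-\infty$; the inclusion $\Sigma_{k+1}\subset\Sigma_k$ gives $c_k\le c_{k+1}$; and $(\mc A_2)$ furnishes $A_k\in\Sigma_k$ with $\sup_{A_k}\mc J<0$, so $c_k<0$. Hence $c_k\nearrow c_\infty\in[c_1,0]$. The goal is to show that each $c_k$ is a critical value with $c_\infty=0$, and that one can select critical points $u_k$ at level $c_k$ with $u_k\to 0$.

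The next step is an equivariant deformation argument. The evenness of $\mc J$ yields an odd pseudogradient vector field (defined off the critical set), whose flow gives, for any regular value $c<0$ and small enough $\epsilon>0$, an odd continuous map $\eta\colon X\to X$ with $\eta(\mc J^{c+\epsilon})\subset \mc J^{c-\epsilon}$, where $\mc J^a=\{\mc J\le a\}$; the local $(\mathrm{PS})$ condition is what makes this flow push level sets down a definite amount. If $c_k$ were regular, choosing $A\in\Sigma_k$ with $\sup_A\mc J\le c_k+\epsilon$ would produce $\eta(A)\in\Sigma_k$ (genus is preserved under odd continuous maps) with $\sup\mc J\le c_k-\epsilon$, contradicting the definition of $c_k$. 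More generally, if $c_k=\cdots=c_{k+j}=:c$, the same deformation combined with the subadditivity estimate $\gamma(\overline{A\setminus N})\ge\gamma(A)-\gamma(\overline N)$ for a symmetric neighborhood $N$ of $K_c:=\{\mc J'=0,\,\mc J=c\}$ forces $\gamma(K_c)\ge j+1$, yielding genuine critical points.

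To show $c_\infty=0$, suppose for contradiction $c_\infty<0$. By $(\mathrm{PS})_{c_\infty}$, $K_{c_\infty}$ is compact, so $m:=\gamma(K_{c_\infty})<\infty$. Choose a bounded symmetric open $N\supset K_{c_\infty}$ with $\gamma(\overline N)=m$ and apply the deformation lemma away from $N$: for some $\delta>0$ an odd $\eta$ satisfies $\eta(\mc J^{c_\infty+\delta}\setminus N)\subset \mc J^{c_\infty-\delta}$. Take $k$ large enough that $c_{k+m}\ge c_\infty-\delta$ and pick $A\in\Sigma_{k+m}$ with $\sup_A\mc J\le c_\infty+\delta$. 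Subadditivity gives $\gamma(\overline{A\setminus N})\ge k$, while $\eta(\overline{A\setminus N})\in\Sigma_k$ has $\sup\mc J\le c_\infty-\delta<c_k$, a contradiction. Therefore $c_k\nearrow 0^-$. Now pick $u_k\in K_{c_k}\setminus\{0\}$. To conclude $u_k\to 0$, suppose along a subsequence $\|u_{k_j}\|\ge\rho>0$; then the set $S_\rho:=\{\mc J'(u)=0,\,\mc J(u)\le 0,\,\|u\|\ge\rho\}$ is nonempty, symmetric, and compact by $(\mathrm{PS})$ on the relevant range, hence has finite genus, and the same neighborhood/deformation scheme applied to a symmetric neighborhood of $S_\rho$ contradicts $c_k\nearrow 0$.

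The main obstacle is building the equivariant deformation lemma so that the deformation (i) is \emph{odd}, (ii) decreases $\mc J$ by a definite amount uniformly outside a prescribed symmetric neighborhood of the critical set, and (iii) remains well defined on a strip where $(\mathrm{PS})$ is available; this requires a careful odd pseudogradient construction and a quantitative use of local compactness of Palais--Smale sequences. The second subtle point is the convergence $u_k\to 0$: the conclusion $\mc J(u_k)\to 0$ alone does not imply $u_k\to 0$ (in general $(\mathrm{PS})_0$ may fail and $\mc J$ may possess nontrivial critical points at level $0$), so the genus-counting obstruction on $S_\rho$ is indispensable and must be fed back into the same minimax machinery.
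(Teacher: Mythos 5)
First, a point of comparison: the paper does not prove Theorem \ref{sym mt} at all --- it is quoted from Kajikiya \cite[Theorem 1]{kajikiya} and used as a black box. So your proposal can only be measured against that source. Your minimax framework is the right one and is the classical Clark/Kajikiya setup: the levels $c_k=\inf_{A\in\Sigma_k}\sup_{u\in A}\mc J(u)$ are finite, negative and nondecreasing; the odd deformation built from an odd pseudogradient together with the local Palais--Smale condition and the monotonicity/subadditivity of the genus shows that each $c_k$ is a critical value, with $\gamma(K_{c})\ge j+1$ when $j+1$ consecutive levels coincide; and your excision argument at the limit level correctly forces $c_k\nearrow 0^-$. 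Up to routine care (working with $\overline{\eta(\,\cdot\,)}$ so that images stay closed, and checking $0\notin K_{c_k}$ since $\mc J(0)=0>c_k$), all of that is sound.

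The genuine gap is in the final step, which is precisely where Kajikiya's theorem goes beyond Clark's. Two problems. First, the set $S_\rho=\{u:\mc J'(u)=0,\ \mc J(u)\le 0,\ \|u\|\ge\rho\}$ need not be compact under the stated hypotheses: a sequence $w_n\in S_\rho$ with $\mc J(w_n)\to 0$ is a Palais--Smale sequence at level $0$, and $(PS)_0$ is not assumed (you note yourself that it may fail); hence $\gamma(S_\rho)$ may be infinite and the finite-genus neighborhood you need may not exist. Second, and more fundamentally, even granting compactness no contradiction with $c_k\nearrow 0$ follows: excising a neighborhood $N\supset K_{c_k}$ with $\gamma(\overline N)=m$ only yields the implication ``either $c_{k+m}>c_k$ or $\gamma(K_{c_k})\ge m+1$,'' and since $K_{c_{k_j}}\subset S_\rho$ forces $\gamma(K_{c_{k_j}})\le m$, all you obtain is $c_{k_j+m}>c_{k_j}$, which is perfectly consistent with $c_k\nearrow 0$. (Genus is not additive over disjoint unions, so you also cannot derive a contradiction by counting the infinitely many disjoint sets $K_{c_{k_j}}$ inside a finite-genus set.) The scenario you are trying to exclude is in fact genuinely possible: Kajikiya states his Theorem 1 as a dichotomy exactly because the critical points at the levels $c_k$ may converge to a \emph{nonzero} limit; in that case the sequence $u_k\to 0$ promised by the statement consists of nontrivial critical points at level exactly $0$ accumulating at the origin, and your construction --- which only ever selects points of $K_{c_k}$ with $c_k<0$ --- can never produce them. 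Closing the gap requires a separate argument showing that if $\dist(0,K_{c_k})\not\to 0$ then $0$ is an accumulation point of nonzero critical points at level $0$; this is the actual content of Kajikiya's proof and cannot be obtained by rerunning the same genus excision at the negative levels.
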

\begin{proposition}\label{infty}
Let  \eqref{assumption1} hold. If $w\in D^{1,p}(\mb R^N)$ is a nontrivial weak solution to \eqref{2p}, then $w\in L^\infty(\mb R^N)$. Moreover, if we consider $f\in L^{\infty}(\mb R^N)$ and $2p<q<2p^*$, then any nontrivial weak solution $w\in D^{1,p}(\mb R^N)$ to \eqref{2p} belongs to $L^{\infty}(\mb R^N)\cap C^{1, r} ( B_R(0))$, for all $R>0$ and for some $r:=r(R)\in (0,1)$.

\end{proposition}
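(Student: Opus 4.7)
The plan is to establish the global $L^\infty$ bound by a Moser–type bootstrap iteration applied to \eqref{2p}, exploiting the properties of $g$ from Lemma \ref{L1} and the doubly weighted Hardy–Littlewood–Sobolev inequality of Proposition \ref{P1}. For the $C^{1,r}$-regularity on balls, I would then quote the standard DiBenedetto/Tolksdorf theory for quasilinear elliptic equations of $p$-Laplacian type once the right-hand side has been shown to lie in $L^{\infty}_{\loc}(\mb R^N)$.

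The first key step is to choose, for fixed $L>0$ and $\sigma\geq 0$, the truncated test function $\varphi=w\,w_L^{p\sigma}$ (with $w_L=\min\{|w|,L\}$) in \eqref{2.5}. On the left-hand side the ellipticity of $-\Delta_p$ and the positivity of the Kirchhoff coefficient $a+b\int|\nabla w|^p$ give a lower bound proportional to $\int |\nabla(|w|w_L^\sigma)|^p$, after absorbing the extra $g'',g'$ factors using $(g_9)$–$(g_{12})$ (these contributions have the favourable sign). On the right, the subcritical term is handled by $|g(w)|\leq|w|$ from $(g_5)$ and H\"older, producing $\la\|f\|_{\frac{2p^*}{2p^*-q}}\||w|^{q-1}w_L^{p\sigma}\|$; for the Stein–Weiss term I would use $|g(s)|\le 2^{1/(2p)}|s|^{1/2}$ from $(g_6)$ to absorb half of the power into $L^{p^*}$ norms, then apply Proposition \ref{P1} with exponents $t=s=\tfrac{2N}{2N-2\beta-\mu}$, obtaining a bound of the form $C\|w\|_{p^*}^{2p^*_{\beta,\mu}-1}\||w|\cdot w_L^{p\sigma}\|_{p^*}$.

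Putting these estimates together, Sobolev's inequality \eqref{se} yields a recursive inequality of Moser type
\[
\bigl\||w|w_L^\sigma\bigr\|_{p^*}^{p}\ \le\ C(\sigma+1)^{p}\Bigl(\la\|f\|_{\frac{2p^*}{2p^*-q}}+\|w\|_{p^*}^{2p^*_{\beta,\mu}-1}+\|w\|_{p^*}^{2p^*_{\beta,\mu}-2}\Bigr)\bigl\||w|w_L^\sigma\bigr\|_{r},
\]
for an exponent $r<p^*$ depending on $q$ and $p^*_{\beta,\mu}$. Letting $L\to\infty$ by Fatou's lemma and then iterating on a geometric sequence $\sigma_{k+1}=(p^*/r)(\sigma_k+1)$ starting from $\sigma_0=0$, a standard Moser bootstrap together with the a priori finiteness of $\|w\|_{p^*}$ (since $w\in D^{1,p}(\mb R^N)$) yields $w\in L^s(\mb R^N)$ for every $s\in[p^*,\infty)$, and taking $k\to\infty$ gives $\|w\|_{\infty}\le C$ with an explicit estimate in terms of $\|w\|_{p^*}$, $\la$, $\|f\|_{\frac{2p^*}{2p^*-q}}$ and the Stein–Weiss constants. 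The subtle point which I expect to be the main obstacle is controlling the convolution term at each Moser step when $\beta>0$: the weight $|x|^{-\beta}$ is only locally integrable in a restricted range, so one must split the integral into $|x|\le 1$ and $|x|>1$ and verify that $2\beta+\mu<\min\{2p,N\}$ from \eqref{assumption1} is exactly what is needed to control the weighted Lebesgue norms on the small-ball part, mirroring the fractional/local treatment of \cite{rs1}.

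For the second assertion, assume in addition $f\in L^{\infty}(\mb R^N)$ and $2p<q<2p^*$. From $(g_5)$ we have $|g(w)|^{q-2}g(w)g'(w)\in L^{\infty}(\mb R^N)$ once $w\in L^\infty$. The Stein–Weiss convolution factor $\int |g(w(y))|^{2p^*_{\beta,\mu}}|y|^{-\beta}|x-y|^{-\mu}\,dy$ is bounded on compact subsets of $\mb R^N$ by Hardy–Littlewood–Sobolev applied with $w\in L^{p^*}\cap L^\infty$, and the factor $|x|^{-\beta}g'(w)$ is bounded on every ball away from the origin (and locally integrable to high power near the origin since $\beta<N/p'$). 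Hence the right-hand side of \eqref{2p} lies in $L^{\infty}(B_R(0))$ for each $R$. Rewriting \eqref{2p} as $-\mathrm{div}(A(x,\nabla w))=F(x)\in L^{\infty}_{\loc}$ with the modulated $p$-Laplacian coefficient $A(x,\xi)=\bigl(a+b|g'(w)|^p\int|g'(w)|^p|\nabla w|^p\bigr)|\xi|^{p-2}\xi$ plus lower order terms that are $L^{\infty}_{\loc}$ in view of $(g_{11})$–$(g_{12})$ and $w\in L^\infty$, the standard DiBenedetto–Tolksdorf regularity theory applies, giving $w\in C^{1,r}(B_R(0))$ for some $r=r(R)\in(0,1)$, as required.
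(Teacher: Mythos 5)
Your overall strategy is the same as the paper's: test \eqref{2.5} with a truncated power of $w$, control the right-hand side via the properties of $g$ and the doubly weighted HLS inequality, run a bootstrap in the exponent, and then invoke local elliptic regularity for the $C^{1,r}$ statement (the paper tests with $v_M^{kp+1}$, $v_M=\min\{w,M\}$, rather than $w\,w_L^{p\sigma}$, which is a minor variant). One inaccuracy first: the $g''$-contribution does \emph{not} have the favourable sign. For $w\ge 0$ and $\varphi\ge 0$, $(g_9)$ gives $g''(w)<0$, so $b\bigl(\int|g'|^p|\nabla w|^p\bigr)\int|g'|^{p-2}g'g''|\nabla w|^p\varphi$ is a negative term on the left of \eqref{2.5} and works against you. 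With your test function it can be dominated by the accompanying positive term $b(\cdots)\int|g'|^p|\nabla w|^{p-2}\nabla w\nabla\varphi$, but only after the pointwise computation $(g')^{p-1}|g''|\,w=2^{p-1}g^{p-1}(g')^{2p}(wg')\le 2^{p-1}(gg')^p(g')^p\le (g')^p$, which uses $(g_{12})$, $(g_4)$ and $(g_{11})$; this must be written out, it is not a sign observation. (The paper instead bounds this term crudely using the boundedness of its truncation.)

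The genuine gap is in the critical Stein--Weiss term: your recursive inequality ``with an exponent $r<p^*$'' does not follow from the estimates you describe, because that term is \emph{exactly} critical. After $(g_4)$, $(g_6)$ and Proposition \ref{P1} with $t=s=\frac{2N}{2N-2\beta-\mu}$, the $x$-factor to be controlled is $\bigl\||w|^{p^*_{\beta,\mu}-p}(|w|w_L^{\sigma})^{p}\bigr\|_{\frac{2N}{2N-2\beta-\mu}}$, and H\"older (with the conjugate pair determined by $\theta p^*_{\beta,\mu}=p^*$, $\theta=\frac{2N}{2N-2\beta-\mu}$) turns this into $\|w\|_{p^*}^{p^*_{\beta,\mu}-p}\,\||w|w_L^{\sigma}\|_{p^*}^{p}$ --- the same norm, to the same power, as your left-hand side. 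Without a further idea the inequality is vacuous and the iteration never starts. The paper's resolution, in \eqref{r6}--\eqref{r7}, is to split the $x$-integral over the level sets $\{w\le\tau\}$ and $\{w>\tau\}$: on the first set $|w|^{p^*_{\beta,\mu}-p}\le\tau^{p^*_{\beta,\mu}-p}$ yields the strictly subcritical norm $\||w|w_L^{\sigma}\|_{pp^*/p^*_{\beta,\mu}}$ (note $pp^*/p^*_{\beta,\mu}<p^*$), while on the second the coefficient $\bigl(\int_{\{w>\tau\}}|w|^{p^*}\bigr)^{(p^*_{\beta,\mu}-p)/p^*}$ is made smaller than $\frac{aS^{1/p}}{2(\sigma+1)^{p}}$ by absolute continuity of the integral, so the critical piece is absorbed into the left-hand side. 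Your proposed ``subtle point'' --- splitting $|x|\le 1$ from $|x|>1$ to control the weight $|x|^{-\beta}$ --- is not where the obstruction lies: the weights are handled wholesale by Proposition \ref{P1} and no splitting in $x$ is needed; it is the level-set splitting in the range of $w$ that is indispensable. The second assertion ($C^{1,r}$ on balls) is argued as in the paper, and your remark on the behaviour of $|x|^{-\beta}$ near the origin is in fact more careful than the paper's.
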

\begin{proof}
Let $w\in  D^{1,p}(\mb R^N)$	be a nontrivial weak solution to \eqref{2p}. Without loss of generality let us assume $w\geq 0$. For any  real number $M>0$, we define the function $$v_M:=\min\{w(x),\,M\}.$$ We consider the test function $v=v_M^{kp+1},\; k\geq0$. Clearly $v_M\in D^{1,p}(\mb R^N)\cap L^\infty(\mb R^N)$. Now using $v$ as test function in the weak formulation \eqref{2.5} and using Lemma \ref{L1}-$(g_3)$, we get
\begin{align}\label{r1}
&a\int_{{\mathbb R^N}} |\nabla w|^{p-2}\nabla w\nabla v_M^k dx+ b\int_{\mathbb R^N}|g^{\prime}(w)|^p|\nabla w|^p dx \int_{\mb R^N}|g^{\prime}(w)|^p|\nabla w|^{p-2}\nabla w\nabla v_M^{kp+1 } dx \notag\\
&= -b\int_{\mathbb R^N}|g^{\prime}(w)|^p|\nabla w|^p dx \int_{\mb R^N} |g^{\prime}(w)|^{p-2} g^{\prime}(w) g^{\prime\prime}(w)|\nabla w|^p v_M^{kp+1} dx\notag\\
&\qquad+\la\int_{\mb R^N}  f(x) |g(w)|^{q-2} g(w) g^{\prime}(w) v_M dx+\int_{{\mathbb R^N}} \left(\ds\int_{\mathbb R^N}\frac{|g(w)|^{2p_{\beta,\mu}^{*}}}{|x-y|
	^{\mu}|y|^{\beta}}dy\right)\frac{|g(w)|^{2p_{\beta,\mu}^{*}-2} g(w)}{|x|^{\beta}} g^{\prime}(w) v_M^{kp+1} dx\notag\\
&\leq b\| w\|^p  \int_{\mb R^N} |g^{\prime}(w)|^{p-1}  g^{\prime\prime}(w)|\nabla w|^p v_M^{kp+1}(x) dx\notag\\
&\qquad+\la\int_{\mb R^N}  f(x) |g(w)|^{q-1}  g^{\prime}(w) v_M^{kp+1}(x) dx
+\int_{{\mathbb R^N}} \left(\ds\int_{\mathbb R^N}\frac{|g(w)|^{2p_{\beta,\mu}^{*}}}{|x-y|
	^{\mu}|y|^{\beta}}dy\right)\frac{|g(w)|^{2p_{\beta,\mu}^{*}-1} }{|x|^{\beta}} g^{\prime}(w) v_M^{kp+1}(x) dx.
\end{align}
Now we estimate the integral expressions in the left hand side of \eqref{r1}:\\
 Using \eqref{se}, we get 
\begin{align}\label{r2}
a\int_{{\mathbb R^N}} |\nabla w|^{p-2}\nabla w\nabla v_M^{kp+1} dx
&=a(kp+1)\int_{{\mathbb R^N}} |\nabla v_M|^{p}v_M^{kp }dx\notag\\
&=a\frac{(kp+1)}{(k+1)^p}\int_{{\mathbb R^N}} |\nabla v_M^{k+1}|^{p}dx\notag\\
&\geq a\frac{(kp+1)}{(k+1)^p}S^{1/p}\|v_M\|^{(k+1)p}_{(k+1)p^*}.
\end{align}
Similarly, 
\begin{align}\label{r3}
	\int_{\mathbb R^N}|g^{\prime}(w)|^p|\nabla w|^p dx \int_{\mb R^N}|g^{\prime}(w)|^p|\nabla w|^{p-2}\nabla w\nabla v_M^{kp+1 } dx&=b\|g(w)\|^p\int_{\mb R^N}|g^{\prime}(w)|^p|\nabla w|^{p-2}\nabla w\nabla v_M^{kp+1 } dx\notag\\
	&=b\frac{(kp+1)}{(k+1)^p}\|g(w)\|^p\int_{\mb R^N}|g^{\prime}(w)|^p |\nabla v_M^{k+1}|^{p}dx\geq 0.
\end{align}
Next, we estimate the integral expressions in the right hand side of \eqref{r1}:\\ Recalling Lemma \ref{L1}-$(g_3), (g_{11})$ and $(g_{12})$, we deduce
\begin{align}\label{r4}
	&b\| w\|^p  \int_{\mb R^N} |g^{\prime}(w)|^{p-1}  g^{\prime\prime}(w)|\nabla w|^p v_M^{kp+1}(x) dx\notag\\
	&\leq b\| w\|^p  \int_{\mb R^N}  |g^{\prime}(w)|^{p+2}|g^{\prime}(w)|^{p-1}  |g(w)|^{p-1}|\nabla w|^p M^{kp+1} dx\notag\\
	&\leq \frac {b }{2^{\frac{(p-1)^2}{p}}}\| w\|^p  \int_{\mb R^N} |\nabla w|^p M^{kp+1} dx\notag\\
	&=\frac {b }{2^{\frac{(p-1)^2}{p}}}M^{kp+1}\|w\|^{2p}.
\end{align}
Applying Lemma \ref{L1}-$(g_4), (g_6)$ and \eqref{se}, we obtain
\begin{align}\label{r5}
	\la\int_{\mb R^N}  f(x) |g(w)|^{q-1}  g^{\prime}(w) v_M^{kp+1}(x) dx
	&\leq \la\int_{\mb R^N}  f(x) |g(w)|^{q} v_M^{kp} dx\notag\\
	&\leq \la 2^{q/{2p}}\int_{\mb R^N} f(x)|w|^{q/2} M^{kp} dx\notag\\
	&\leq \la 2^{q/{2p}} M^{kp} \|f\|_{\frac{2p^*}{2p^*-q}}\|w\|_{p^*}^{q/2}\notag\\
	&\leq \la 2^{q/{2p}} M^{kp} \|f\|_{\frac{2p^*}{2p^*-q}}S^{-q/{2p}}\|w\|^{q/2}.
\end{align}
Again employing Lemma \ref{L1}-$(g_4), (g_6)$, \eqref{se} and recalling Proposition \ref{P1} and applying the H\"older inequality, we deduce
\begin{align}\label{r6}
	&\int_{{\mathbb R^N}} \left(\ds\int_{\mathbb R^N}\frac{|g(w)|^{2p_{\beta,\mu}^{*}}}{|x-y|
		^{\mu}|y|^{\beta}}dy\right)\frac{|g(w)|^{2p_{\beta,\mu}^{*}-1} }{|x|^{\beta}} g^{\prime}(w) v_M^{kp+1}(x) dx\notag\\
	&\leq \int_{{\mathbb R^N}} \left(\ds\int_{\mathbb R^N}\frac{|g(w)|^{2p_{\beta,\mu}^{*}}}{|x-y|
		^{\mu}|y|^{\beta}}dy\right)\frac{|g(w)|^{2p_{\beta,\mu}^{*}} }{|x|^{\beta}} w^{kp}(x) dx\notag\\
		&\leq 2^{\frac{p_{\beta,\mu}^*}{p}} C(N,p,\mu,\beta)	\|w\|^{p_{\beta,\mu}^*}_{p^*}\left(\int_{\mb R^N} |w|^{(p_{\beta,\mu}^*+kp)\frac{p^*}{p_{\beta,\mu}^*}}\right)^{\frac{p_{\beta,\mu}^*}{p^*}}\notag\\
		&:=C\left(\int_{\{w\leq \tau\}} |w|^{(p_{\beta,\mu}^*+kp)\frac{p^*}{p_{\beta,\mu}^*}}
		+\int_{\{w> \tau\}} |w|^{(p_{\beta,\mu}^*+kp)\frac{p^*}{p_{\beta,\mu}^*}}\right)^{\frac{p_{\beta,\mu}^*}{p^*}}\notag\\
		&\leq C\left[\tau^{p_{\beta,\mu}^*-p}\left(\int_{\{w\leq \tau\}} (|w|^{(k+1)p})^{\frac{p^*}{p_{\beta,\mu}^*}}\right)^{\frac{p_{\beta,\mu}^*}{p^*}}
		+\left(\int_{\{w> \tau\}} (|w|^{p_{\beta,\mu}^*-p}|w|^{(k+1)p})^{\frac{p^*}{p_{\beta,\mu}^*}}\right)^{\frac{p_{\beta,\mu}^*}{p^*}}\right]\notag\\
		&\leq C\Bigg[\tau^{p_{\beta,\mu}^*-p} \|w\|^{(k+1)p}_{(k+1)\frac{pp^*}{p_{\beta,\mu}^*}}
		+\left(\int_{\{w> \tau\}} \left(|w|^{(p_{\beta,\mu}^*-p){\frac{p^*}{p_{\beta,\mu}^*}}}\right)^{\frac{p_{\beta,\mu}^*}{p_{\beta,\mu}^*-p}} dx\right)^{\frac{p_{\beta,\mu}^*-p}{p_{\beta,\mu}^*}\cdot\frac{p_{\beta,\mu}^*}{p^*}}\notag\\
		&\qquad\qquad \qquad\qquad\qquad\qquad\times\left(\int_{\{w> \tau\}} \left(|w|^{(k+1)p{\frac{p^*}{p_{\beta,\mu}^*}}}\right)^{\frac{p^*_{\beta,\mu}}{p}}dx\right)^{\frac{p}{p_{\beta,\mu}^*}\cdot\frac{p_{\beta,\mu}^*}{p^*}}\Bigg]\notag\\
		&\leq C\Bigg[\tau^{p_{\beta,\mu}^*-p} \|w\|^{(k+1)p}_{(k+1)\frac{pp^*}{p_{\beta,\mu}^*}}
		+\left(\int_{\{w> \tau\}} |w|^{p^*}\right)^{\frac{p_{\beta,\mu}^*-p}{p^*}}\left(\int_{\mb R^N} \|w|^{(k+1)p^*}dx\right)^{\frac{p}{p^*}}\Bigg]\notag\\
		&:=C_1 \|w\|^{(k+1)p}_{(k+1)\frac{pp^*}{p_{\beta,\mu}^*}}+ C_2(\tau)\|w\|^{(k+1)p}_{(k+1)p^*},
\end{align} where $\tau>0$ will be chosen later so that $C(\tau)>0$ will be sufficiently small.
Now plugging \eqref{r2},\eqref{r3}, \eqref{r4} \eqref{r5} in \eqref{r1} and letting $M\to\infty$ and applying Fatou's lemma, we get
\begin{align}\label{r7}
\|w\|^{(k+1)p}_{(k+1)p^*}&\leq 	\frac{(k+1)^p}{a(kp+1)S^{1/p}}\Bigg[\frac {b }{2^{\frac{(p-1)^2}{p}}}M^{kp+1}\|w\|^{2p}+  \la 2^{q/{2p}} M^{kp} \|f\|_{\frac{2p^*}{2p^*-q}}S^{-q/{2p}}\|w\|^{q/2}\notag\\
	&\qquad\qquad\qquad\qquad+C_1 \|w\|^{(k+1)p}_{(k+1)\frac{pp^*}{p_{\beta,\mu}^*}}+ C_2(\tau)\|w\|^{(k+1)p}_{(k+1)p^*}\Bigg]\notag\\
	&\leq 	\frac{(k+1)^p}{aS^{1/p}}\Big[ \tilde C_3(M+1)^{(k+1)p}+  C_1 \|w\|^{(k+1)p}_{(k+1)\frac{pp^*}{p_{\beta,\mu}^*}}+ C_2(\tau)\|w\|^{(k+1)p}_{(k+1)p^*}\Big],
\end{align} where the $\tilde C_3:=\tilde C_3(b,\la,p,N, q, \|f\|_{\frac{2p^*}{2p^*-q}},\|w\|)>0$ is a positive constant. Next, we can choose $\tau>0$ sufficiently large so that, by the Lebesgue dominated  convergence theorem we can find $C(\tau)<\frac {aS^{1/p}}{2(k+1)^p}$. Therefore, using this in \eqref{r7}, we obtain
\begin{align}\label{r8}
\|w\|_{(k+1)p^*}\leq \tilde C^{\frac{1}{(k+1)p}}	{(k+1)}^{\frac{1}{(k+1)}}\Big[ 1+ \|w\|_{(k+1)\frac{pp^*}{p_{\beta,\mu}^*}}\Big],
\end{align} where the $\tilde C:=\tilde C(a,b,M,\la,p,N, q, \|f\|_{\frac{2p^*}{2p^*-q}},\|w\|)>0$ is a positive constant. Since $p_{\beta,\mu}^*>p,$ we have $\frac{pp^*}{p_{\beta,\mu}^*}<p^*.$\\ 
	Case I: If there exists a sequence  ${k_n}$ such that $k_n\to\infty$ as $n\to\infty$ such that $$\|w\|_{(k_n+1)\frac{pp^*}{p_{\beta,\mu}^*}}\leq1,$$ then from \eqref{r8}, we can infer that
$\|w\|_{{\infty}}\leq1.$\\
Case II:   
If there is no such sequence satisfying the above condition as in Case I, then there exists $k_0>0$ such that
$$\|w\|_{(k+1)\frac{pp^*}{p_{\beta,\mu}^*}}>1,\text{\; for\;all\; } k\geq k_0.$$	Then  \eqref{r8} yields that 
\begin{align}\label{9}
	\|w\|_{{(k+1) p^*}}\leq \left(\tilde C^{\frac{1}{k+1}}\right)^{\frac{1}{p}} (k+1)^{\frac{1}{k+1}} \|w\|_{{\frac{(k+1) pp^*}{p_{\beta,\mu}^*}}}, \text{  for all } k\geq k_0.
\end{align}
Now we use standard bootstrap argument  by choosing the $1^{st}$ iteration as $k:=k_1$ in  \eqref{9} such that $(k_1+1)p=p_{\beta,\mu}^*$. In a similar manner, considering the $n^{th}$ iteration as $k=k_n:=k_{n-1}\frac{p_{\beta,\mu}^*}{p}$ to obtain 
\begin{align}\label{itn}
	\|w\|_{{(k_n+1) p^*}}&\leq \left(\tilde C^{\frac{1}{k_n+1}}\right)^{\frac{1}{p}} (k_n+1)^{\frac{1}{k_n+1}}\|w\|_{{ (k_{n-1}+1) p^*}}\notag\\
	&=\left(\tilde C^{\sum_{j=1}^{n}{\frac{1}{k_j+1}}}\right)^{\frac{1}{p}}\left(\prod_{j=1}^{n}(k_j+1)^{\sqrt{\frac{1}{k_j+1}}}\right)^{\sqrt{\frac{1}{k_j+1}}}\|w\|_{{ k_0 p^*}},
\end{align}
where $k_j+1=\left(\frac{p_{\mu,s}^*}{p}\right)^j.$ Since $\frac{p_{\mu,s}^*}{p}>1,$ we get $(k_j+1)^{\sqrt{\frac {1}
		{k_j+1}}}>1$ for all $j\in\mathbb N$ and
$\lim_{j\to\infty}(k_j+1)^{\sqrt{\frac {1}
		{k_j+1}}}=1.$
Hence,  there exists a constant $\underline C>1,$ independent of $n,$ such that $(k_j+1)^{\DD\sqrt{\frac{1}{k_j+1}}}<\underline C$ and thus, \eqref{itn} gives
\begin{align}\label{10}
	\|u\|_{{k_n p^*}}&\leq \left(\tilde C^{\DD\sum_{j=1}^{n}{\frac{1}{k_j+1}}}\right)^{\frac{1}{p}}{\underline C}^{\DD\sum_{j=1}^{n}\sqrt{\frac{1}{k_j+1}}}\|u\|_{{k_0  p^*}}.
\end{align}
As limit $n\to\infty,$  we have $$\sum_{j=1}^{\infty}{\frac{1}{k_j+1}}=\frac{p}{p_{\beta,\mu}^*-p};\;\;\;
\sum_{j=1}^{\infty}{\frac{1}{\sqrt{k_j+1}}}=\frac{\sqrt p}{\sqrt{ p_{\beta,\mu}^*}-\sqrt p}.$$
Thus, from  \eqref{10}, it follows that
\begin{align}\label{contr}
	\|w\|_{\al_n}\leq \left(\tilde C\right)^{\frac{1}{p_{\beta,\mu}^*-p}}\left(\underline C\right)^{\frac{\sqrt p}{\sqrt{p_{\beta, \mu}^*}-\sqrt {p}}}\|w\|_{{k_0  p^*}},
\end{align} where $\al_n:=(k_n+1)p^*$  and $\al_n\to\infty$ as $n\to\infty.$ Now we claim that  
\begin{align}\label{claim}
	w\in L^\infty(\mb R^N).
\end{align}
Indeed, if not then 
there exists $\e>0$ and a subset $\mathcal{D}$ of $\mb R^N$ with $\meas (\mathcal{D})>0$ such that 
$$w(x)>\left(\tilde C\right)^{\frac{1}{p_{\beta,\mu}^*-p}}\left(\underline C\right)^{\frac{\sqrt p}{\sqrt{p_{\beta, \mu}^*}-\sqrt {p}}}\|w\|_{k_0  p^*}+\vartheta\text{~~for } x\in\mathcal{D},$$
 which implies that
\begin{align*}
	\liminf_{\al_n\to\infty}\left(\int_{\mb R^N}|w(x)|^{\al_n}dx\right)^{\frac{1}{\al_n}}
	&\geq \DD\liminf_{\al_n\to\infty}\left(\int_{\mathcal{S}}|w(x)|^{\al_n}dx\right)^{\frac{1}{\al_n}}\\&\geq\DD\liminf_{\al_n\to\infty}\left(\left(\tilde C \right)^{\frac{1}{p_{\beta,\mu}^*-p}}\left(\underline C\right)^{\frac{\sqrt p}{\sqrt{p_{\beta, \mu}^*}-\sqrt {p}}}\|w\|_{{ k_0 p^*}}+\vartheta\right)\left(\meas(\mathcal{D})\right)^{\frac{1}{\al_n}}\\
	&=\left(\tilde C \right)^{\frac{1}{p_{\beta,\mu}^*-p}}\left(\underline C\right)^{\frac{\sqrt p}{\sqrt{p_{\beta, \mu}^*}-\sqrt {p}}}\|w\|_{k_0  p^*}+\vartheta.
\end{align*}
This contradicts \eqref{contr}. Thus, \eqref{claim} holds.\\
Now for the next part of the proposition, $f\in L^{\infty}(\mb R^N)$, hence using  Lemma \ref{L1}-$(g_3),(g_5),$ it follows that $$f(x)|h(w)|^{q-2}h(w) h'(w)\in L^{\infty}(\mb R^N).$$ Moreover, following the arguments in \cite{my1} (see also \cite{rs2}) in combination with Lemma \ref{L1}-$(g_5)$, we can deduce that $\int_{{\mathbb R^N}} \frac{|g(w(y))|^{2p_{\beta,\mu}^*}}{ |y|^\beta|x-y|^\mu}dy\in L^\infty(\mb R^N)$ and thus from \ref{L1}-$(g_3)$, it yields that 
\[\left(\int_{{\mathbb R^N}} \frac{|g(w(y))|^{2p_{\beta,\mu}^*}}{ |y|^\beta|x-y|^\mu}dy\right)\frac{|g(w(x))|^{2p_{\beta,\mu}^*}}{|x|^\beta} g'(w)\in L^{\infty}(\mb R^N).\] Therefore, using elliptic regularity theory, we infer that for any $R>0$ there exists $r(R)\in(0,1)$ such that $w\in C^{1,r}( {B}_R(0))$. This completes the proof of the lemma.

	
\end{proof}
	\noi {\bf Proof of Theorem \ref{main.result.1} :}
From the hypotheses, it follows that $ \mc I_\la$ is even and $\mc {I}_\la(0)=0.$ Also Lemma \ref{cc} ensures that $ \mc I_\la$ satisfies the  $(PS)_c$-condition for all $c<0$. But observe that, $\mc I_\la$ is not bounded from below in $D^{1,p}(\mb R^N)$. So, for applying Theorem \ref{sym mt}, we use a truncation technique.\\
 Let $w\in D^{1,p}(\mb R^N)$.  Using Lemma \ref{L1}-$(g_5),\,(g_6)$, \eqref{bc1} and \eqref{se}, we get\begin{align}\label{mpp}
\mc I_\la (w)&=\frac{a}{p}\displaystyle\int_{{\mathbb R^N}} |\nabla w|^{p}dx+\frac{ b}{2p}\left( \int_{\mathbb R^N}|g^{\prime}(w)|^p|\nabla w|^pdx\right)^2\nonumber\\ &\qquad\quad-\frac{\la}{q}\int_{\mathbb R^N}f(x)|g(w)|^q dx -\frac {1}{4p_{\beta,\mu}^*}\int_{{\mathbb R^N}}\left(\int_{{\mathbb R^N}} \frac{|g(w)|^{2p_{\beta,\mu}^*}}{ |y|^\beta|x-y|^\mu}dy\right)\frac{|g(w)|^{2p_{\beta,\mu}^*}}{|x|^\beta}dx\notag\\
	&\geq\frac{a}{p}\displaystyle\int_{{\mathbb R^N}} |\nabla w|^{p}dx-\frac{\la}{q} 2^{\frac {q}{2p}}\int_{\mathbb R^N}f(x)|w|^{q/2} dx - \frac {1}{4p_{\beta,\mu}^*} 2^{\frac{{p_{\beta,\mu}^*}}{p}}\int_{{\mathbb R^N}}\left(\int_{{\mathbb R^N}} \frac{|w|^{p_{\beta,\mu}^*}}{ |y|^\beta|x-y|^\mu}dy\right)\frac{|w|^{p_{\beta,\mu}^*}}{|x|^\beta}dx\notag\\
	& \geq \frac ap \|w\|^p-\frac \la q 2^{\frac {q}{2p}} \|f\|_{\frac{2p^*}{2p^*-p}}\|w\|^{q/2}_{p^*}-\frac{1}{4p_{\beta,\mu}^*} 2^{\frac{{p_{\beta,\mu}^*}}{p}}\|w\|_{\beta,\mu}^{2p_{\beta,\mu}^*}\notag\\&\geq \frac ap \|w\|^p-\frac \la q 2^{\frac {q}{2p}}S^{-\frac{q}{2p}} \|f\|_{\frac{2p^*}{2p^*-p}}\|w\|^{q/2}-\frac{1}{4p_{\beta,\mu}^*} 2^{\frac{{p_{\beta,\mu}^*}}{p}} S_{\beta,\mu}^{-\frac{2p_{\beta,\mu}^*}{p}}\|w\|^{2p_{\beta,\mu}^*}\notag\\
	&:= C_1 \|w\|^p-\la C_2\|w\|^{q/2}-C_3\|w\|^{{2p_{\beta,\mu}^*}}.
\end{align}  Define the function $\ell: \mb R_0^+\to\mb R$ as 
\begin{align}\label{t}
\mc \ell(t)=C_1 t^p-\la C_2t^{q/2}-C_3t^{{2p_{\beta,\mu}^*}}.
\end{align} 
Since $2<q<2p$, we can choose $\la_0$  sufficiently small such that for all $\la\in (0,\la_0)$ there exist $0<t_1<t_2$ so that $\ell<0$ in $(0,t_1)$, $\ell>0$ in $(t_1,t_2)$ and $\ell<0$ in $(t_2,\infty)$. Therefore $\ell(t_1)=0=\ell(t_2)$. 
Next, we choose a non-increasing function $\mc H\in C^\infty([0,\infty),[0,1])$ such that
\begin{equation*}
	\mc H(t)=
	\begin{cases}
		&1  \mbox{\;\; if}\ t\in [0,t_1] \\
		&0  \mbox{\;\; if}\ t\in [t_2,\infty).
	\end{cases}
\end{equation*}
and set $\Pi(w):=\mc H(\|w\|).$  Now we define the truncated functional $\hat I_\la: D^{1,p}(\mb R^N)\to \mb R$ of $\mc I_\la$ as
\begin{align}\label{ii}
	\hat  I_\la( u)&:= \frac{a}{p}\displaystyle\int_{{\mathbb R^N}} |\nabla w|^{p}dx+\frac{ b}{2p}\left( \int_{\mathbb R^N}|g^{\prime}(w)|^p|\nabla w|^pdx\right)^2\\ &\qquad\quad-\Pi (w)\frac{\la}{q}\int_{\mathbb R^N}f(x)|g(w)|^q dx -\Pi(w)\frac {1}{4p_{\beta,\mu}^*}\int_{{\mathbb R^N}}\left(\int_{{\mathbb R^N}} \frac{|g(w)|^{2p_{\beta,\mu}^*}}{ |y|^\beta|x-y|^\mu}dy\right)\frac{|g(w)|^{2p_{\beta,\mu}^*}}{|x|^\beta}dx.
\end{align} Then, it can be verified easily that $\hat I$ satisfies the following:
\begin{enumerate}
\item  $\hat I_\la\in C^1(D^{1,p}(\mb R^N),\mb R) $, $\hat  I_\la( 0)=0$. 
\item  $\hat I_\la$ is even, coercive and bounded from below in $D^{1,p}(\mb R^N)$.
\item Let $c<0$, then there exists $\la_1>0$ such that for all $\la\in (0,\la_1)$, $\hat I_\la$ satisfies the Palais-Smale condition.
\item If $\hat  I_\la( w)<0,$ then $\|w\|\leq t_1$ and $\hat I_\la(w)=\mc I_\la(w)$.
\end{enumerate}
 For any $k\in\mb N$, we consider $k$ numbers of disjoint open sets denoted by $V_j$, $j=1,2,\cdots k$ with $\cup_{j=1}^k V_j\subset \Om$, where $\Om\not=\emptyset$ is given as in Theorem \ref{main.result.1}. Now we choose $w_j\in D^{1,p}(\mb R^N)\cap C_0^\infty(V_j)\setminus\{0\}$, with $\|w_j\|=1$ for each $j=1,2,\cdots,k$. Set $$X_k=span\{w_1,w_2,\cdots,w_k\}.$$  Now we claim that there exists $0<\varrho_k<t_1,$ sufficiently small such that \begin{align}\label{claim1}
 	m_k:=\max\{\hat I_\la(u):u\in X_k,~\|w\|=\varrho_k\}\leq0.
 \end{align}
  Suppose that \eqref{claim1} does not hold. Then there exists a sequence $\{w_n\}:=\{w_n^{(k)}\}$ in $X_k$ such that 
\begin{align}\label{f11}\|w_n\|\to\infty;\; \hat I_\la(w_n)\geq 0.
\end{align} Let's set\[u_n=\frac{w_n}{\|w_n\|}.\] Then $u_n\in D^{1,p}(\mb R^N)$ and $\|u_n\|=1.$ Since $X_k$ is finite dimensional, there exists $u\in X_k\setminus\{0\}$ such that
\begin{align*}
	u_n&\to u \text{\;\; strongly with respect to \;\;} \|\cdot\|;\\
	u_n(x)&\to u(x) \text{\;\; a.e. in \;\;} \mb R^N.
\end{align*} As $u\not\equiv 0$, we get $|w_n(x)|\to\infty$ as $n\to\infty$. Thus, as $n\to\infty$,
\[\frac{1}{\|w_n\|^{2p}}\int_{\mb R^N}\int_{\mb R^N} \frac{|w_n(x)|^{{p_{\beta,\mu}^*}}|w_n(y)|^{{p_{\beta,\mu}^*}}}{|x|^\beta |x-y|^\mu |y|^\beta}dxdy=\int_{\mb R^N}\int_{\mb R^N} \frac{|w_n(x)|^{{p_{\beta,\mu}^*}-p}|w_n(y)|^{{p_{\beta,\mu}^*}-p}}{|x|^\beta |x-y|^\mu |y|^\beta} |u_n(x)|^{p}|u_n(y)|^{p}dxdy\to \infty.\] Using this together with Lemma \ref{L1}-$(g_3),(g_8)$, from \eqref{ii}, we obtain
\begin{align*}
	\hat I_\la(w_n)&\leq \frac ap \|w_n\|^p+\frac{b}{2p}\|w_n\|^{2p}-\frac{(g(1))^{4p_{\beta,\mu}^*}}{4{p_{\beta,\mu}^*}}\int_{\mb R^N}\int_{\mb R^N} \frac{|w_n(x)|^{{p_{\beta,\mu}^*}}|w_n(y)|^{{p_{\beta,\mu}^*}}}{|x|^\beta |x-y|^\mu |y|^\beta}dxdy\\
	&\leq \|w_n\|^{2p}\left((\frac ap +\frac{b}{2p})-\frac{(g(1))^{4p_{\beta,\mu}^*}}{4{p_{\beta,\mu}^*}}\frac{1}{\|w_n\|^{2p}}\int_{\mb R^N}\int_{\mb R^N} \frac{|w_n(x)|^{{p_{\beta,\mu}^*}}|w_n(y)|^{{p_{\beta,\mu}^*}}}{|x|^\beta |x-y|^\mu |y|^\beta}dxdy\right)\\
	&\to-\infty
\end{align*} as $n\to\infty.$
This contradicts \eqref{f11}. Thus, the claim is proved. Now choose $A_k:=\{w\in X_k\; :\; \|w\|=\varrho_k\}$. Clearly $\gamma (A_k)=k$ and $A_k$ is closed and symmetric, and hence $A_k\in\Sigma_k$ and also from \eqref{claim1}, $\sup_{w\in A_k}\hat I_\la(w)<0.$ Therefore, $\hat I_\la $ satisfies all the assumption in Theorem \ref{sym mt}. Thus, $\hat I_\la$ admits a sequence of critical points $\{w_k\}$ in $D^{1,p}(\mb R^N)$ such that $w_k\not =0$, $\hat I_\la (w_k)\leq 0$ for each $k\in \mb N$  and $\|w_k\| \to 0$ as $k\to\infty$. So, for $t_1>0$, there exists $ k_0\in\mathbb N$ such that for all $k\geq k_0$
it follows that $\|w\|<t_1$ which yields that $\hat I_\la(w_k)= \mc I_\la(w_k)$ for all $k> k_0.$ This together with Proposition \ref{infty} concludes the proof of the theorem.
	\section{\bf Proof of Theorem \ref{sym-infinite-sol}}
\noi Before proceeding into the proof of Theorem \ref{sym-infinite-sol}, first we recall the following $\mathbb Z_2$-symmetric version of mountain pass theorem due to \cite {sm}.
	\begin{theorem}\label{sm}
	Let $X$ be an infinite dimensional Banach space with $X= Y\oplus Z$, where $Y$ is finite dimensional and let \(\mc J \in C^{1}(X,\mathbb R)\) be an even functional with $\mc J(0)=0$ such that the following conditions hold:
	\begin{enumerate}
		\item[$(\mc B_1)$] there exist positive constants \(l>0, \mc K>0\) such that $\mc J(u)\geq\mc K$ for all \(u\in \partial B_{l}(0) \cap Z\);
		\item[$(\mc B_2)$] there exists $c^*> 0$ such that \(\mc J\) satisfies the $(PS)_c$ condition for $0<c< c^*$;
		\item[$(\mc B_3)$] for any finite dimensional subspace $\hat{X}\subset X$, there is \(R= R(\hat{X})>0\) such that $\mc J(u)\leq 0$ for all \(u\in\tilde{X}\setminus B_{R}(0)\).
	\end{enumerate}
	Assume that $Y$ is $k$-dimensional and $Y= span\{v_1, v_2, \cdots, v_k\}$. For $n \geq k$, inductively choose \(v_{n+1}\not\in Y_n := span\{v_1, v_2, \cdots, v_n\}.\) Let \(R_n =R(Y_n)\) and \(D_n = B_{R_n(0)}\cap Y_n\). Define
	\[G_n=\{h \in C(D_n, X): h|_{\partial B_{R_n}(0)},\; h \;\mbox{is odd and}\; h(u)=u, \text { for all } B_{R_n}(0)\cap Y_n \}\]
	and
	\begin{align}\label{gm}\Gamma_j = \{h(\overline{D_n\setminus S}) : h \in G_n, n\geq j,\; S \text{\; is closed and symmetric},\;\mbox{and}\; \gamma(S)\leq n-j\},\end{align}
	where $\ga(S)$ is Krasnoselskii's genus of $S$. For each $j\in \mathbb N$, set
	\[c_j :=\inf_{A\in \Gamma_j} \max_{u\in A} \mc J(u).\]
	Thus \(0<\alpha\leq c_j\leq c_{j+1}\) for $j>k$ and  if $j>k$ and $c_j< c^*$, then we conclude that \(c_j\) is the critical value of $\mc J$. Furthermore, if $c_j=c_{j+1}=\cdots=c_{j+m}=c< c^*$ for $j>k$, then $\gamma(K_c)\geq m+1$, where
	\[K_c= \{u\in X : \mc J(u)=c\; \mbox{and} \;\mc J^{\prime}(u)=0\}.\]
\end{theorem}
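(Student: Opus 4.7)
The plan is to argue by a minimax scheme built on Krasnoselskii's genus, combined with an odd pseudo-gradient deformation lemma. The three technical ingredients are: (i) a linking-type intersection lemma that forces every $A \in \Gamma_j$ (with $j>k$) to meet $\partial B_l(0)\cap Z$, so that $(\mathcal{B}_1)$ yields $c_j\geq \mathcal{K}$; (ii) invariance of the minimax class $\Gamma_j$ under odd deformations supplied by $(\mathcal{B}_2)$; and (iii) a genus-based cutting argument for the multiplicity claim.

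\textbf{Step 1: positivity, monotonicity, and non-emptiness of $\Gamma_j$.} The identity map on $D_n$ lies in $G_n$, and taking $S=\emptyset$ shows $D_n \in \Gamma_j$ whenever $n\geq j$, so $\Gamma_j$ is non-empty. Since $\Gamma_{j+1}\subset \Gamma_j$, one immediately gets $c_j\leq c_{j+1}$. The crucial positivity bound $c_j\geq \mathcal{K}$ for $j>k$ comes from an intersection lemma: given any $A=h(\overline{D_n\setminus S})\in \Gamma_j$, I would consider the odd continuous map $\Phi:\overline{D_n}\to Y\times \mathbb{R}$ defined by $\Phi(u)=(Ph(u),\|h(u)\|-l)$, where $P:X\to Y$ is the projection along $Z$. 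Standard genus subadditivity gives $\gamma(\Phi^{-1}(0))\geq n-\dim Y = n-k$, and since $\gamma(S)\leq n-j$, the set $\Phi^{-1}(0)\setminus S$ has genus $\geq (n-k)-(n-j)=j-k\geq 1$. Any point in this set is mapped by $h$ into $\partial B_l(0)\cap Z$, so $A$ meets $\partial B_l(0)\cap Z$ and $(\mathcal{B}_1)$ forces $\max_A \mathcal{J}\geq \mathcal{K}$.

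\textbf{Step 2: $c_j$ is a critical value when $c_j<c^*$.} Fix $j>k$ and $c:=c_j<c^*$ and suppose by contradiction that $c$ is regular. Because of $(\mathcal{B}_2)$ the PS condition holds at $c$, so a standard odd deformation lemma produces, for some small $\varepsilon>0$, an odd continuous $\eta:X\to X$ that is the identity outside $\mathcal{J}^{-1}([c-2\varepsilon,c+2\varepsilon])$ and satisfies $\eta(\{\mathcal{J}\leq c+\varepsilon\})\subset \{\mathcal{J}\leq c-\varepsilon\}$. By $(\mathcal{B}_3)$, for large $R_n$ the disk $B_{R_n}(0)\cap Y_n$ lies in $\{\mathcal{J}\leq 0\}$; shrinking $\varepsilon$ ensures $\eta$ is the identity there, and oddness is preserved. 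Pick $A=h(\overline{D_n\setminus S})\in \Gamma_j$ with $\max_A \mathcal{J}\leq c+\varepsilon$. Then $\eta\circ h \in G_n$, so $\eta(A)\in \Gamma_j$, while $\max_{\eta(A)}\mathcal{J}\leq c-\varepsilon$, contradicting $c=\inf_{\Gamma_j}\max \mathcal{J}$.

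\textbf{Step 3: multiplicity via genus.} Suppose $c_j=c_{j+1}=\dots=c_{j+m}=c<c^*$ and, for contradiction, $\gamma(K_c)\leq m$. By PS compactness $K_c$ is symmetric and compact, so there is a closed symmetric neighborhood $U$ of $K_c$ with $\gamma(\overline{U})=\gamma(K_c)\leq m$. The deformation lemma away from $K_c$ then yields an odd $\eta$ with $\eta(\{\mathcal{J}\leq c+\varepsilon\}\setminus U)\subset\{\mathcal{J}\leq c-\varepsilon\}$. Choose $A=h(\overline{D_n\setminus S})\in \Gamma_{j+m}$ with $\max_A\mathcal{J}\leq c+\varepsilon$; set $\widetilde S = S\cup h^{-1}(\overline{U})$, which is closed symmetric with $\gamma(\widetilde S)\leq (n-(j+m))+m = n-j$. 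Then $\eta\circ h(\overline{D_n\setminus \widetilde S})\in \Gamma_j$, yet its $\mathcal{J}$-values are $\leq c-\varepsilon$, contradicting $c_j=c$. Hence $\gamma(K_c)\geq m+1$.

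\textbf{Main obstacle.} The most delicate point is Step~1: executing the genus-intersection argument so that the removal of $S$ (of genus $\leq n-j$) leaves enough of $\Phi^{-1}(0)$ to guarantee a hit on $\partial B_l\cap Z$. The second delicate issue, in Steps~2--3, is arranging the odd deformation $\eta$ to be the identity on $B_{R_n}(0)\cap Y_n$ so that $\eta\circ h$ still belongs to $G_n$; this requires combining $(\mathcal{B}_3)$ (which puts that disk strictly below the critical level) with a cut-off in the pseudo-gradient construction, together with a careful choice of $R_n$ that is independent of the competitor map $h$.
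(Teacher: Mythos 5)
This theorem is quoted in the paper from Rabinowitz's CBMS notes \cite{sm} and is not proved there, so there is no in-paper argument to compare against. Your sketch follows the classical deformation-plus-genus proof, and your Steps 2 and 3 are essentially the standard arguments; they close up modulo routine points you already flag (one must also choose the neighborhood $U$ of $K_c$ to avoid the origin so that $\gamma(h^{-1}(\overline U))\le\gamma(\overline U)$ via the mapping property, and use $c\ge\mathcal K>0$ to make the deformation the identity on $\{\mathcal J\le 0\}\supset\partial B_{R_n}(0)\cap Y_n$).

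There is, however, a genuine gap in Step 1. The map $\Phi(u)=(Ph(u),\|h(u)\|-l)$ is \emph{not} odd: since $h$ is odd and the norm is even, $\Phi(-u)=(-Ph(u),\|h(u)\|-l)\neq-\Phi(u)$. So the estimate $\gamma(\Phi^{-1}(0))\ge n-k$ cannot be extracted from the subadditivity and mapping properties of the genus, which apply only to odd maps. Moreover, even if $\Phi$ were odd, as a map into $Y\times\mathbb R\cong\mathbb R^{k+1}$ the naive count would give $\gamma(\Phi^{-1}(0))\ge n-(k+1)$, and after deleting $S$ you would only get genus $\ge j-k-1$, which is $0$ when $j=k+1$ and therefore does not force $\Phi^{-1}(0)\setminus S\neq\emptyset$. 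The correct argument (Rabinowitz, Proposition 9.23) decouples the two constraints: set $\mathcal O=\{u\in B_{R_n}(0)\cap Y_n:\|h(u)\|<l\}$, a bounded symmetric open neighborhood of $0$ in $Y_n$ (here one uses $h(0)=0$, $h=\mathrm{id}$ on $\partial B_{R_n}(0)\cap Y_n$ and $R_n>l$, so $\partial\mathcal O\subset\{u:\|h(u)\|=l\}$); the Borsuk--Ulam theorem gives $\gamma(\partial\mathcal O)=n$ \emph{exactly} (this is where the crucial extra $+1$ comes from), hence $\gamma(\overline{\partial\mathcal O\setminus S})\ge n-(n-j)=j>k$; if $h(\overline{\partial\mathcal O\setminus S})$ missed $Z$, then $P\circ h$ --- which \emph{is} odd --- would map this set continuously into $Y\setminus\{0\}\cong\mathbb R^k\setminus\{0\}$, forcing its genus to be at most $k$, a contradiction. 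This produces a point of $A$ in $\partial B_l(0)\cap Z$ and hence $c_j\ge\mathcal K$ for $j>k$. You should replace your $\Phi$-based intersection lemma with this two-step argument (or an equivalent correct formulation).
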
		
\noi Now we show that $\mc I_\la $ satisfies all the hypotheses of Theorem \ref{sm}, when $q=2p$. 
\begin{lemma}\label{smg}
Let  $q=2p$ and \eqref{assumption1} hold.	Then  $\mc I_\la$ satisfies the conditions $(\mc B_1)$-$(\mc B_3)$ of Theorem \ref{sm} for all $\la\in (0,\,aS\|f\|^{-1}_{\frac{p^*}{p^*-p}}). $
\end{lemma}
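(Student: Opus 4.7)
The plan is to verify each of the three structural conditions $(\mc B_1)$--$(\mc B_3)$ of Theorem \ref{sm} in turn, with $q = 2p$ and $\la \in (0,\,aS\|f\|^{-1}_{\frac{p^*}{p^*-p}})$ fixed throughout. Condition $(\mc B_2)$ is immediate: it is precisely the content of Lemma \ref{ce} with threshold $c^* = \frac{1}{4p}(aS_{\beta,\mu})^{p^*_{\beta,\mu}/(p^*_{\beta,\mu}-1)}$ and the same hypothesis on $\la$.

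For $(\mc B_1)$ I would revisit the chain of inequalities leading to \eqref{mpp} specialised to $q = 2p$. Since $q/2 = p$, the subcritical integral $\frac{\la}{q}\int_{\mb R^N} f|g(w)|^q$ is dominated by a constant multiple of $\la S^{-1}\|f\|_{\frac{p^*}{p^*-p}}\|w\|^p$ via Lemma \ref{L1}-$(g_6)$, H\"older's inequality and the Sobolev embedding \eqref{se}. Collecting the two $\|w\|^p$ contributions produces
\begin{align*}
\mc I_\la(w) \geq \frac{1}{p}\Bigl(a - \la S^{-1}\|f\|_{\frac{p^*}{p^*-p}}\Bigr)\|w\|^p - \tilde C\,\|w\|^{2p^*_{\beta,\mu}}
\end{align*}
for some $\tilde C > 0$ independent of $w$. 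For $\la$ in the prescribed interval the coefficient of $\|w\|^p$ is strictly positive, and since $2p^*_{\beta,\mu} > p$ (a consequence of \eqref{assumption1}), for $\|w\| = l$ small enough one obtains $\mc I_\la(w) \geq \mc K > 0$; this bound being uniform on $D^{1,p}(\mb R^N)$, it holds in particular on $\partial B_l(0) \cap Z$ for any closed subspace $Z$.

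For $(\mc B_3)$ I would fix a finite dimensional subspace $\hat X \subset D^{1,p}(\mb R^N)$ and argue by contradiction, mirroring the calculation \eqref{f11} at the end of the proof of Theorem \ref{main.result.1}. Supposing there were $\{w_n\} \subset \hat X$ with $\|w_n\| \to \infty$ and $\mc I_\la(w_n) \geq 0$, the normalisations $u_n = w_n/\|w_n\|$ would converge in $\hat X$ (by finite-dimensionality) to some $u$ with $\|u\| = 1$, forcing $|w_n(x)| \to \infty$ on the positive-measure set $\{u \neq 0\}$. Discarding the non-positive subcritical term, using $|g'(w)|^p \leq 1$ from Lemma \ref{L1}-$(g_3)$, and invoking the inverse growth $(g_8)$ on $\{|w_n| \geq 1\}$ would give
\begin{align*}
\mc I_\la(w_n) \leq \frac{a}{p}\|w_n\|^p + \frac{b}{2p}\|w_n\|^{2p} - \frac{(g(1))^{4p^*_{\beta,\mu}}}{4p^*_{\beta,\mu}}\int_{\mb R^N}\!\int_{\mb R^N}\!\frac{|w_n(x)|^{p^*_{\beta,\mu}}|w_n(y)|^{p^*_{\beta,\mu}}}{|x|^\beta|x-y|^\mu|y|^\beta}\,dxdy.
\end{align*}
After dividing by $\|w_n\|^{2p}$, the normalised Stein--Weiss term diverges since $p^*_{\beta,\mu} > p$ (equivalent to $2\beta + \mu < 2p$, which is part of \eqref{assumption1}), producing the required contradiction and yielding $R = R(\hat X)$. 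The principal subtlety lies precisely here: converting norm equivalence on $\hat X$ into the pointwise lower bound needed for $(g_8)$ to activate, and tracking that the Stein--Weiss contribution of order $\|w_n\|^{2p^*_{\beta,\mu}}$ ultimately dominates the Kirchhoff-type term of order $\|w_n\|^{2p}$.
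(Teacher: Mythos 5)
Your proposal is correct and follows the paper's proof almost verbatim for $(\mc B_1)$ (specialising \eqref{mpp} to $q=2p$ so that the subcritical term contributes $\la S^{-1}\|f\|_{\frac{p^*}{p^*-p}}\|w\|^p$ and the coefficient $a-\la S^{-1}\|f\|_{\frac{p^*}{p^*-p}}$ stays positive) and for $(\mc B_2)$ (a direct citation of Lemma \ref{ce}). The only divergence is in $(\mc B_3)$: the paper fixes $\phi$ with $\|\phi\|=1$, bounds $\mc I_\la(t\phi)\leq C_4 t^{2p}-C_5(C(Y))^{2p_{\ba,\mu}^*}t^{2p_{\ba,\mu}^*}$ via Lemma \ref{L1}-$(g_3),(g_8)$ and the equivalence of $\|\cdot\|$ and $\|\cdot\|_{\ba,\mu}$ on the finite-dimensional subspace, and lets $t\to\infty$; you instead run the contradiction/normalisation argument that the paper itself uses to establish \eqref{claim1} in the proof of Theorem \ref{main.result.1} (a sequence with $\|w_n\|\to\infty$ and $\mc I_\la(w_n)\geq 0$, normalised limits, pointwise blow-up forcing the Stein--Weiss term to dominate). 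Both routes rest on the same two facts --- $p_{\ba,\mu}^*>p$ and norm equivalence in finite dimensions --- so the difference is one of packaging; the paper's ray-scaling version is marginally cleaner since it avoids the a.e.-convergence and positive-measure-set bookkeeping, while your version has the merit of explicitly flagging that $(g_8)$ only gives $|g(s)|\geq g(1)|s|^{1/2}$ on $\{|s|\geq 1\}$, a threshold the paper's display \eqref{sm1} silently glosses over.
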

\begin{proof}
{\it Verification of $(\mathcal B_1):$} For $w\in D^{1,p}(\mb R^N),$ arguing similarly as in \eqref{mpp}, we have
\begin{align*}
	\mc I_\la( w)&\geq  \frac{\|w\|^p}{p}\left( a- \la S^{-1} \|f\|_{\frac{p^*}{p^*-p}}\right)-\frac{1}{4p_{\beta,\mu}^*} 2^{\frac{{p_{\beta,\mu}^*}}{p}} S_{\beta,\mu}^{-\frac{2p_{\beta,\mu}^*}{p}}\|w\|^{2p_{\beta,\mu}^*}\\
\end{align*} Now for $\la<aS\|f\|^{-1}_{\frac{p^*}{p^*-p}}$,  we can choose $\|w\|=l<<1$ such that $\mc I_\la( w)\geq \mc K>0.$\\\\
{\it Verification of $(\mathcal B_2):$} It follows from Lemma \ref{ce}.\\\\
{\it Verification of $(\mathcal B_3):$} To show this, first claim that for any finite dimensional subspace
$Y$ of $D^{1,p}(\mb R^N)$ there exists $ R_0= R_0(Y)$ such that $ \mc I_\la( w)<0$ for all $w\in D^{1,p}(\mb R^N)\setminus B_{ R_0} (Y),$ where $B_{R_0}(Y)=\{w \in D^{1,p}(\mb R^N): \|w\|\leq  R_0\}.$ Fix $\phi\in D^{1,p}(\mb R^N),\;\|\phi\|=1.$ For $t>1$, using Lemma \ref{L1}-$(g_3), (g_8)$, 
we get
\begin{align}\label{sm1}
	\mc I_\la( t\phi)&\leq \frac ap t^{p} \|\phi\|^p+  t^{2p}\frac{b}{2p}\|\phi\|^{2p}
	-\frac{1}{{4p_{\beta,\mu}^*}} (g(1))^{{4p_{\beta,\mu}^*}}t^{2p_{\beta,\mu}^*}\int_{\mb R^N}\int_{\mb R^N} \frac{|\phi(x)|^{{p_{\beta,\mu}^*}}|\phi(y)|^{{p_{\beta,\mu}^*}}}{|x|^\beta |x-y|^\mu |y|^\beta}dxdy\notag\\
	&\leq C_4  t^{2p}\|\phi\|^{2p}
	-C_5 t^{2p_{\beta,\mu}^*}\|\phi\|_{\beta,\mu}^{2p_{\beta,\mu}^*}
\end{align}
Since $Y$ is finite dimensional all norms are equivalent on $Y$, which yields that there exists some constant $C(Y)>0$ such that $C(Y)\|\phi\|\leq\|\phi\|_{\beta,\mu}.$ Therefore from \eqref{sm1}, we obtain

\begin{align*}
	\mc I_\la( tw)&\leq C_4  t^{2p}
	-C_5 (C(Y))^{2p_{\beta,\mu}^*}t^{2p_{\beta,\mu}^*}\|\phi\|^{2p_{\beta,\mu}^*}\\
	&= C_4  t^{2p}
	-C_5 (C(Y))^{2p_{\beta,\mu}^*} t^{2p_{\beta,\mu}^*}\to-\infty
\end{align*}
as $t\to\infty$.
Hence, there exists $ R_0>0$ large enough such that $\mc I_\la( w)<0$ for all
$w\in D^{1,p}(\mb R^N)$ with $\|w\|={R}$ and $ R\geq R_0$. Therefore $\mc I_\la$ satisfies the assertion $(\mathcal B_2)$.
\end{proof}
\begin{lemma}\label{ss}
There exists a non-decreasing sequence $\{s_n\}$ of positive real numbers, independent of $\la$ such that for any $\la>0$, we have
\[c_n^\la:=\inf_{A\in \Gamma_n}\max_{w\in A} \mc I_\la(w)<s_n,\] where $\Gamma_n$ is defined in \eqref{gm}.
\end{lemma}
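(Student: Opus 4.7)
The plan is to exhibit, for each $n \in \mb N$, a concrete element of $\Gamma_n$ on which the maximum of $\mc I_\la$ is bounded by a $\la$-independent quantity. The crucial observation is that since $f \geq 0$ and $\la > 0$, the perturbation $-\frac{\la}{q}\int_{\mb R^N} f(x)|g(w)|^q \, dx$ in \eqref{energy} is non-positive, so
\[
\mc I_\la(w) \;\leq\; \mc I_0(w) \quad \text{for every } w \in D^{1,p}(\mb R^N),
\]
where $\mc I_0$ denotes the functional with $\la$ formally set to zero. This gives a $\la$-uniform upper envelope for $\mc I_\la$ that can be exploited on any fixed set.

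First I would fix the linearly independent sequence $\{v_k\}_{k\geq 1}$ prescribed in Theorem \ref{sm} and set $Y_n := \mathrm{span}\{v_1,\ldots,v_n\}$. Inspecting the verification of $(\mc B_3)$ in Lemma \ref{smg}, the estimate obtained there on $\mc I_\la(tw)$ for $w \in Y_n$ is actually uniform in $\la$ (the $\la$-term having already been discarded using $f\geq 0$, and the decay being driven by the fact that $2p_{\beta,\mu}^{*} > 2p$ together with norm equivalence on the finite-dimensional $Y_n$). Consequently there exists $R_n > 0$, depending only on $Y_n$, such that $\mc I_0(w) \leq 0$ for every $w \in Y_n$ with $\|w\| \geq R_n$. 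Replacing $R_n$ by $\max\{R_1,\ldots,R_n\}$ if necessary, we may (and do) assume that $R_n$ is non-decreasing in $n$.

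Set $D_n := B_{R_n}(0) \cap Y_n$. Choosing $h = \mathrm{id}$ and $S = \emptyset$ in \eqref{gm} (admissible since $\gamma(\emptyset) = 0 \leq n-n$) shows that $\overline{D_n} \in \Gamma_n$. Define
\[
s_n := \max_{w \in \overline{D_n}} \mc I_0(w) + 1.
\]
The maximum is finite because $\overline{D_n}$ is compact in the finite-dimensional space $Y_n$ and $\mc I_0$ is continuous, and $s_n \geq 1 > 0$ because $\mc I_0(0) = 0$. The nesting $Y_n \subset Y_{n+1}$ together with $R_n \leq R_{n+1}$ gives $\overline{D_n} \subset \overline{D_{n+1}}$, hence $s_n \leq s_{n+1}$. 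Finally, for every $\la > 0$,
\[
c_n^\la = \inf_{A\in \Gamma_n}\max_{w\in A}\mc I_\la(w) \;\leq\; \max_{w\in \overline{D_n}}\mc I_\la(w) \;\leq\; \max_{w\in \overline{D_n}}\mc I_0(w) = s_n - 1 < s_n,
\]
which is the desired estimate. The only point requiring care is to notice that the estimate in $(\mc B_3)$ from Lemma \ref{smg} is already $\la$-uniform; beyond that, no substantive obstacle arises, and the addition of $1$ in the definition of $s_n$ is only to guarantee the strict inequality.
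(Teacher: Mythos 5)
Your proof is correct and follows essentially the same route as the paper: both arguments exploit $f\geq 0$ and $\la>0$ to discard the nonpositive perturbation term and obtain a $\la$-independent upper envelope for $\mc I_\la$, the paper defining $s_n$ as the inf--max of that envelope over $\Gamma_n$ while you evaluate it on the explicit admissible set $\overline{D_n}\in\Gamma_n$. Your version is slightly more careful in that the compactness of $\overline{D_n}$ and the added $+1$ make the finiteness, positivity, and strictness of the inequality $c_n^\la<s_n$ explicit, points the paper's one-line proof leaves implicit.
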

\begin{proof}
	Recalling the definition of $c_n^\la$ and using  Lemma \ref{L1}-$(g_3),(g_8)$, from \eqref{ii}, we get \[ c_n^\la\leq \inf_{A\in \Gamma_n}\max_{w\in A}\left[\frac ap \|w_n\|^p+\frac{b}{2p}\|w_n\|^{2p}-\frac{(g(1))^{4p_{\beta,\mu}^*}}{4{p_{\beta,\mu}^*}}\|w\|_{\beta,\mu}^{2p_{\beta,\mu}^*}\right]:=s_n\] Then clearly from the definition of $\Gamma_n$, it follows that $s_n<\infty$ and $s_n\leq s_{n+1}$.
\end{proof}
\noi {\bf Proof of Theorem \ref{sym-infinite-sol}:} From the hypotheses of the theorem it follows that $\mc I_\la$	is even and we have $\mc I_\la(0)=0.$  Now we argue similarly as in \cite{sm}.
From the Lemma \ref{ss}, we can choose, $\hat a>0$ sufficiently large such that for any $a>\hat a$,
\[\sup_n s_n<\frac {1}{4p}\left( aS_{\ba,\mu}\right)^{\frac{p_{\ba,\mu}^*}{p_{\ba,\mu}^*-1}}:= c^*,\] that is,
\[c_n^\la<s_n<\frac {1}{4p}\left( aS_{\ba,\mu}\right)^{\frac{p_{\ba,\mu}^*}{p_{\ba,\mu}^*-1}}.\] Hence, for all $\la\in(0,\,aS\|f\|^{-1}_{\frac{p^*}{p^*-p}})$ and $a>\hat a$, we have
\[0<c_1^\la\leq c_2^\la\leq\cdots\leq c_n^\la<s_n<c^*.\] Now by Theorem \ref{sm}, we infer that the levels $c_1^\la\leq c_2^\la\leq\cdots\leq c_n^\la$ are critical values of $\mc I_\la.$ Therefore, if $c_1^\la<c_2^\la<\cdots< c_n^\la$, then $\mc I_\la$ has at least $n$ number of critical points. Furthermore, if $c_j^\la=c_{j+1}^\la$ for some $j=1,2,\cdots,k-1$, then again Theorem \ref{sm} yields that $A_{c_j^\la}$ is an infinite set. Hence, \eqref{2p} has infinitely many solutions. 
Therefore, we can conclude that \eqref{2p} has at least $n$ pair of solutions Since $n$ is arbitrary, we get infinitely many solutions  and moreover, these solutions are in $L^\infty(\mb R^N)$ by Proposition \ref{infty}.
\section{\bf Proof of Theorem \ref{dual-fount-sol}}
\noi In this section, we prove Theorem \ref{dual-fount-sol} using Theorem \ref{sm}. For that, first we show $\mc I_\la$ verifies all the hypotheses of Theorem \ref{sm}, when $2p<q<2p^*$.
\begin{lemma}\label{ft}
Let $2p<q<2p^*$ and \eqref{assumption1} hold.	Then  $\mc I_\la$ satisfies the conditions $(\mc B_1)$-$(\mc B_3)$ of Theorem \ref{sm} for all $\la\geq 0.$
\end{lemma}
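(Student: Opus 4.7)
The plan is to verify conditions $(\mc B_1)$, $(\mc B_2)$ and $(\mc B_3)$ in turn, adapting the strategy already used in Lemma \ref{smg} to the new range $2p<q<2p^*$.

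For $(\mc B_1)$, I would reuse the elementary lower bound developed in the proof of Theorem \ref{main.result.1}: combining Lemma \ref{L1}-$(g_5),(g_6)$ with the Sobolev embedding \eqref{se} and the Stein-Weiss constant \eqref{bc1} yields
\[
\mc I_\la(w) \;\geq\; C_1 \|w\|^p - \la C_2 \|w\|^{q/2} - C_3 \|w\|^{2p_{\beta,\mu}^*}.
\]
Since $q>2p$ implies $q/2>p$ and assumption \eqref{assumption1} (via $2\beta+\mu<2p$) gives $p_{\beta,\mu}^*>p$, both subtracted terms are of strictly higher order than $\|w\|^p$. Hence choosing $l=l(\la)>0$ small enough produces $\mc I_\la(w)\geq \mc K>0$ on $\partial B_l(0)$, and a fortiori on $\partial B_l(0)\cap Z$.

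Condition $(\mc B_2)$ is the exact content of Lemma \ref{cf}: I set
\[
c^* \;:=\; c^{**} \;=\; \left(\tfrac{1}{2p}-\tfrac{1}{q}\right)(aS_{\beta,\mu})^{p_{\beta,\mu}^*/(p_{\beta,\mu}^*-1)},
\]
which is independent of $\la\geq 0$, and invoke Lemma \ref{cf} directly.

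For $(\mc B_3)$, I imitate the corresponding step of Lemma \ref{smg}. Given a finite-dimensional subspace $\hat X\subset D^{1,p}(\mb R^N)$, take $\phi\in\hat X$ with $\|\phi\|=1$ and consider $t\phi$ for $t>1$. Using $0<g'(s)\leq 1$ for the Kirchhoff term and the bound $|g(s)|\geq g(1)|s|^{1/2}$ for $|s|\geq 1$ from Lemma \ref{L1}-$(g_8)$ on the set $\{|t\phi|\geq 1\}$ (which exhausts $\{\phi\neq 0\}$ as $t\to\infty$), together with the nonnegativity of $f$ (so the $\la$-term only helps), one obtains
\[
\mc I_\la(t\phi) \;\leq\; \frac{a}{p}\,t^p + \frac{b}{2p}\,t^{2p} - \tilde C\, t^{2p_{\beta,\mu}^*}\|\phi\|_{\beta,\mu}^{2p_{\beta,\mu}^*} + o\bigl(t^{2p_{\beta,\mu}^*}\bigr),
\]
with $\tilde C>0$ independent of $\phi$. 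The equivalence of norms on the finite-dimensional $\hat X$ provides a constant $C(\hat X)>0$ with $\|\phi\|_{\beta,\mu}\geq C(\hat X)$; since $2p_{\beta,\mu}^*>2p$, the critical term dominates as $t\to\infty$, so $\mc I_\la(t\phi)\to-\infty$ uniformly for $\phi$ on the unit sphere of $\hat X$, yielding the required $R=R(\hat X)$.

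The most delicate point is $(\mc B_3)$: the pointwise lower bound $(g_8)$ is only valid where $|t\phi|\geq 1$, so the residual contribution from $\{0<|t\phi|<1\}$ must be absorbed into lower-order terms. This is controlled by dominated convergence, using that on the finite-dimensional $\hat X$ all sublevel sets $\{|\phi|<1/t\}$ shrink in measure uniformly as $t\to\infty$, so the doubly-nonlocal convolution integral over $\{|t\phi|\geq 1\}$ converges to the full integral on $\mb R^N\times\mb R^N$.
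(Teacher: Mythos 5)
Your proposal is correct and follows essentially the same route as the paper: $(\mc B_1)$ via the lower bound from \eqref{mpp} using $q/2>p$ and $p_{\beta,\mu}^*>p$, $(\mc B_2)$ by invoking Lemma \ref{cf} with $c^*=c^{**}>0$, and $(\mc B_3)$ by repeating the finite-dimensional norm-equivalence argument from Lemma \ref{smg}. Your extra care about the region $\{|t\phi|<1\}$ where the square-root bound in Lemma \ref{L1}-$(g_8)$ is unavailable is a point the paper glosses over, and your dominated-convergence fix is sound.
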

\begin{proof}
{\it Verification of $(\mathcal B_1):$}  Let $w\in D^{1,p}(\mb R^N)$ with $\|w\|<1$.  Using the similar arguments as in \eqref{mpp},  we get\begin{align*}
					\mc I_\la( w)
					&\geq \frac ap \|w\|^p-\frac \la q 2^{\frac {q}{2p}}S^{-\frac{q}{2p}} \|f\|_{\frac{2p^*}{2p^*-p}}\|w\|^{q/2}-\frac{1}{4p_{\beta,\mu}^*} 2^{\frac{{p_{\beta,\mu}^*}}{p}} S_{\beta,\mu}^{-\frac{2p_{\beta,\mu}^*}{p}}\|w\|^{2p_{\beta,\mu}^*}.
				\end{align*} 
		 Since $ 2p<q$ and $p<p_{\beta,\mu}^*$, we can choose $0<\rho<1$  sufficiently small so that, we  obtain  for all $w\in D^{1,p}(\mb R^N)$ with $\|w\|=\rho$, $\mc I_\la( w) \geq \al >0$ for some $\al>0$ depending on $\rho$.\\\\
{\it Verification of $(\mathcal B_2):$} It follows from Lemma \ref{cf}, since $c^{**}>0$.\\\\
{\it Verification of $(\mathcal B_3):$} The argument follows similarly as in {\it Verification of $(\mathcal B_3)$} in Lemma \ref{smg}.
\end{proof}
\noi {\bf Proof of Theorem \ref{dual-fount-sol}} Using Lemma \ref{ft} and arguing in a similar fashion as in Lemma \ref{ss} and as in \ref{sm}, we can conclude that \eqref{2p} has at least $n$ pairs of distinct solutions for all $\la>0$. Since $n$ is arbitrary, we have infinitely many solutions.  Now Proposition \ref{infty} yields that these solutions belong to $L^\infty(\mb R^N)$.

		\end{document}